	\def\?[#1]{\textbf{[#1]}\marginpar{\Large{\textbf{??}}}}%
\numberwithin{equation}{section}
\newcommand{\be}{\begin{equation}}
\newcommand{\ee}{\end{equation}}
\newcommand{\ov}{\overline}
\renewcommand{\dim}{\operatorname{dim}}
\renewcommand{\Re}{\mathop{\rm Re}\nolimits}
\renewcommand{\Im}{\mathop{\rm Im}\nolimits}
\newcommand{\dist}{\operatorname{dist}}
\theoremstyle{plain}
\newtheorem{thm}{Theorem}
\newtheorem{prop}{Proposition}[section]
\newtheorem{cor}[prop]{Corollary}
\newtheorem{lem}[prop]{Lemma}
\theoremstyle{definition}
\newtheorem{rem}[prop]{Remark}
\numberwithin{equation}{section}
\def\squarebox#1{\hbox to #1{\hfill\vbox to #1{\vfill}}}
\title[Time optimal  observability for the Grushin Schr\"odinger equation ]
{Time optimal observability for Grushin Schr\"odinger equation}
\author[N. Burq]{Nicolas Burq}
\address{Universit\'e Paris-Sud, Université Paris-Saclay and Institut Universitaire de France, Laboratoire de mathématiques d'Orsay, UMR 8628 du CNRS, B\^atiment 307, 91405 Orsay Cedex, France}
\email{nicolas.burq@math.u-psud.fr}
\author[C-M. Sun]{Chenmin Sun}
\address{Universit\'e de Cergy-Pontoise, Laboratoire de Mathématiques AGM, UMR  8088 du CNRS, 2 av. Adolphe Chauvin
95302 Cergy-Pontoise Cedex, France }
\email{chenmin.sun@u-cergy.fr}
\def\11{{\rm 1~\hspace{-1.4ex}l} }
\def\R{\mathbb R}
\def\C{\mathbb C}
\def\Z{\mathbb Z}
\def\N{\mathbb N}
\def\T{\mathbb T}
\begin{document}

	\begin{abstract}
		We consider two dimensional Grushin Schr\"odinger equation posed on a finite cylinder $\Omega=(-1,1)_x\times \T_y$ with Dirichlet boundary condition. We obtain the sharp observability by any horizontal strip, with the optimal time $T_*>0$ depending on the size of the strip. Consequently, we prove the exact controllability for Grushin Schr\"odinger equation. By exploiting the concentration of eigenfunctions of harmonic oscillator at $x=0$, we also show that the observability fails for any $T\leq T_*$. 
		
	\end{abstract}   
	
	\maketitle 
	\setlength{\parskip}{0.3em}  
	
	\section{Introduction}   
	\label{in}
	\subsection{Motivation}
	In this article we are interested in the observation and control for a sub-elliptic Schr\"odinger  equation on a finite cylinder. Observability for linear evolution PDEs has been extensively studied in the past decades. These studies aim to observe solutions of PDEs 
\begin{equation}\label{eq:abstract} \partial_tU-\mathcal{L}U=0,\quad U|_{t=0}=U_0
\end{equation}
	in a small region $\omega$ (control region) by spending some time. More precisely, it concerns an inequality of the type
	\begin{equation}\label{ob:abstract}
	 \|U(0)\|_{X}\leq C_T\int_0^T\|\mathbf{1}_{\omega}U(t)\|_X^2dt.
	\end{equation}
	If \eqref{ob:abstract} holds uniformly with respect to the initial data $U(0)$ in some Hilbert space $X$, then we say that the observability of the equation \eqref{eq:abstract} from $(0,T)\times\omega$ is  true.  
	There are two aspects. Firstly, observability is a quantitative version of the unique continuation property. The question of observability from $(0,T)\times\omega$ can be reformulated as: can sequences of solutions of \eqref{eq:abstract} concentrate on sets which do not intersect with $(0,T)\times\omega$? Secondly, it is equivalent to the null controllability (and hence for reversible systems to the exact controlability) of the dual equation of \eqref{eq:abstract}(see \cite{Li}). For hyperbolic PDEs, the observability can be obtained effectively by the theorem of propagation of singularities. In particular, for the wave equation, it is well-known that (see \cite{BLR},\cite{BG}) the observability is equivalent to the so called geometric control condition, which says that all the trajectories of the generalized geodesic flow will enter the control region before some prescribed time $T>0$. Moreover, one cannot observe the waves before this geometric time~$T$.
	
	For Schr\"odinger equation
	$$ i\partial_tu+\Delta u=0
	$$
	on  manifolds (with or without boundaries),
	due to the dispersion, or infinite propagation speed, it was shown by Lebeau (\cite{Le}) that the observability holds true for arbitrary short time $T$, provided that the control region $\omega$ satisfies the geometric control condition. This geometric restriction on $\omega$ is not necessary in general. It turns out that the stability(instability) of the geodesic flow of the underlying manifold plays an important role for the concentration of solutions of Schr\"odinger equation. It was shown in \cite{Ja},\cite{BZ4},\cite{BBZ},\cite{AM},\cite{ALM},\cite{Ji} that for any $T>0$ and any non-empty open set $\omega$, the observability of Schr\"odinger equation from $(0,T)\times\omega$ is true, if the underlying manifolds are torus, disk or hyperbolic surfaces. In this article, {in some sense,  we change dramatically the  geometry of the underlying manifold, and are interested in the observability problem for the Schr\"odinger equation in a  hypoelliptic geometry.} For our model example, a mixed feature of propagation and dispersion will be revealed, which will crucially influence the validity of the observability, leading to a unique example for which observability and control hold for the Schr\"odinger equation while it is violated for the heat equation~\cite{Ko17}.
	
	\subsection{Grushin Schr\"odinger equation} Denote by $\T:=\R/(2\pi\Z)$ the one-dimensional torus. We consider the following Grushin Schr\"odinger equation on a finite cylinder $\Omega=(-1,1)_x\times \T_y$ with Dirichlet boundary condition:
	\begin{equation}\label{main-equation-bounded}
	\begin{cases}
	&i\partial_tu+\Delta_Gu=0, (t,x,y)\in\mathbb{R}\times \Omega \\
	&u|_{t=0}=u_0 \in L^2(\Omega),\\
	&u|_{x=\pm 1}=0
	\end{cases}
	\end{equation}
	where the Grushin Laplace operator is defined as
	$ \Delta_G=\partial_x^2+x^2\partial_y^2
	$,
	together with domain
$$	D(\Delta_G):=\{u\in \mathcal{D}'(\Omega): \partial_x^2u, x^2\partial_y^2u\in L^2(\Omega)\text{ and } u\mid_{\partial \Omega} =0  \}.
$$
	The mass 
	$$ \|u(t)\|_{L^2(\Omega)}^2=\int_{\Omega}|u(t,x,y)|^2dxdy
	$$
	and the energy
	\begin{align}\label{H1G}
	 \|u\|_{\dot{H}_G^1(\Omega)}:=\int_{\Omega}(|\partial_xu(t,x,y)|^2+|x\partial_yu(t,x,y)|^2)dxdy
	\end{align}
	are conserved along the flow.

	The equation \eqref{main-equation-bounded} can be solved by first taking the Fourier transformation in $y$ variable, and then solving the spectral problem of 1D harmonic oscillator.  More precisely, for fixed $n\in \Z$, we denote by
	$\varphi_{m,n}(x)$ be the $m$-th eigenfunction of the operator $L_n=-\partial_x^2+n^2x^2$ with the domain $D(L_n)=H^2((-1,1))\cap H_0^1((-1,1))$,  associated with eigenvalues $\lambda_{m,n}^2$. We expand the initial data $u_0(x,y)$ as
	$$ u_0(x,y)=\sum_{m=0}^{\infty}\sum_{n\in\Z}a_{m,n}\varphi_{m,n}(x)e^{iny},
	$$
	hence, the solution of \eqref{main-equation-bounded} is given by
	$$ (e^{it\Delta_G}u_0)(t,x,y)=\sum_{m=0}^{\infty}\sum_{n\in\Z}a_{m,n}e^{-it\lambda_{m,n}^2}\varphi_{m,n}(x)e^{iny}.
	$$ 
	In particular,
	\begin{itemize}
		\item $\mathrm{Spec}(-\Delta_G)=\{\lambda_{m,n}^2:m\in\mathbb{N},n\in\mathbb{Z}\}$.
		\item Eigenfunctions of $-\Delta_G$ are
		$$ \Phi_{m,n}(x,y)=\varphi_{m,n}(x)e^{iny}.
		$$
	\end{itemize}

	A classical question in control theory is whether,  given a non-empty open set $\omega$ and time $T_0>0$, for $u_0\in L^2(\Omega)$,  there exists a
	control $f\in L^2((0,T)\times\omega)$ such that the solution of (1.1) satisfies $u|_{t\geq T_0}=0$.
	
	 Control and observation problem for the degenerate parabolic equation has attracted lots of attentions in these years. Many results have been obtained, yet, the picture is far from been completed. For the parabolic Grushin equation
	 $$ \partial_tu-\Delta_Gu=0,
	 $$ 
	 it turns out that there exists a minimal time $T^*>0$ for the observability, if $\omega$ does not touch the line $x=0$ on which $\Delta_G$ is degenerate (see \cite{BCG14}). It was also shown in \cite{Ko17} that the observability is untrue, if $\omega$ does not intersect a horizontal strip. Though governed by different mechanism, we present a time-optimal control and observation result for the Grushin Schr\"odinger equation, as a counterpart of the mentioned degenerate parabolic equations. 	
	    
	\subsection{Main results}	
	In this article, we consider the horizontal control domain $\omega$ of the form $(-1,1)_x\times I$, where $I\subset \T$ is a finite union of intervals. For such $\omega$, we define the quantity $\mathcal{L}(\omega)$ to be the length of the largest interval in $\Omega\setminus\omega\cap \{ x=0\}$:
	$$ \mathcal{L}(\omega):=\sup\{s: \exists y_1,y_2\in\T \textrm{ such that } \dist_{\T}(y_1,y_2)=s,\textrm{ and } [(0,y_1),(0,y_2) ]\cap \omega=\emptyset  \}.
	$$ 
	\begin{center}
		\begin{tikzpicture}
		\draw[->] (-3,0) -- (3,0);
		\draw (3,0) node[right] {$x$};
		\draw [->] (0,-1.75) -- (0,1.75);
		\draw (0,1.75) node[above] {$y$};
		\draw (-2.5,-1.25) rectangle (2.5,1.25);
		\draw[fill=orange!40] (-2.5,1.25) rectangle (2.5,1.25);
		\draw[fill=blue!40] (-2.5,0.55) rectangle (2.5,1.25);
		\draw[fill=blue!40] (-2.5,-1.25) rectangle (2.5,-0.55);
		\draw[black] (0.25,0.75) node[above] {$\omega$};
		\draw[black] (0.25,-0.75) node[below] {$\omega$};
		\draw[red] [<->] (0.25,-0.55) -- (0.25,0.55);
		\draw[red] (0.9,-0.3) node[above] {$ \mathcal{L} (\omega)$};
		\draw[->] (5,0) -- (11,0);
		\draw (11,0) node[right] {$x$};
		\draw [->] (8,-2.25) -- (8,2.25);
		\draw (8,2.25) node[above] {$y$};
		\draw (5.5,-1.25) rectangle (10.5,1.25);
		\draw[fill=blue!40] (5.5,-0.55) rectangle (10.5,0.3);
		\draw[fill=white!40] (5.5,0.3) rectangle (10.5,1.25);
		\draw[fill=white!40] (5.5,-1.25) rectangle (10.5,-0.55);
		\draw[fill=blue!40] (5.5, 1.25) rectangle (10.5,1.75);
		\draw[fill=blue!40] (5.5,-1.75) rectangle (10.5,-1.25);
		\draw [<->] (9,0.25) -- (9,1.25);
		\draw[red] (9.5,0.5) node[above] {$ \mathcal{L} (\omega)$};
		\draw[black] (8.25,1.25) node[above] {$\omega$};
		\draw[black] (8.25,-1.25) node[below] {$\omega$};
		\draw[black] (08.25,0) node[below]{$\omega$};
		\end{tikzpicture}
	\end{center}
	We show that if $T_0>\mathcal{L}(\omega)$, the observability is true:
		\begin{thm}\label{positive-bounded}
		Let $\omega$ be a horizontal strip of $\Omega$ and assume that $T_0>\mathcal{L}(\omega)$. Then there exists $ C_{T_0}>0$, such that for any solution $u$ to \eqref{main-equation-bounded} with initial data $u_0\in L^2(\Omega)$, the observability
		\begin{equation}\label{thm-observability}
		\|u_0\|_{L^2(\Omega)}^2\leq C_{T_0}\int_{0}^{T_0}\|u(t)\|_{L^2(\omega)}^2dt 
		\end{equation}
		holds true.
	\end{thm}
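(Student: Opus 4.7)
My plan is to decompose solutions along the orthonormal basis $\{\varphi_{m,n}(x)e^{iny}\}$ from the spectral discussion before the theorem, group the expansion by the Hermite index $m$, and exploit the semiclassical asymptotics of the 1D Dirichlet harmonic oscillator $L_n=-\partial_x^2+n^2x^2$. The key inputs are that for each fixed $m$ and $|n|\to\infty$,
\[
\lambda_{m,n}^2=(2m+1)|n|+O(e^{-c|n|}),\qquad \varphi_{m,n}(x)=|n|^{1/4}h_m(|n|^{1/2}x)+O_{L^2}(e^{-c|n|}),
\]
with $h_m$ the $m$-th $L^2(\R)$-normalized Hermite function; the exponentially small errors come from the Gaussian tails of $h_m$ reaching the Dirichlet boundary $x=\pm 1$. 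These give Gaussian concentration of $\varphi_{m,n}$ at $x=0$ on scale $|n|^{-1/2}$, and an affine dispersion relation $n\mapsto\lambda_{m,n}^2$ of slope $\pm(2m+1)$ on $\pm n>0$.

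The linear-in-$n$ dispersion is the crux. It implies that the ``$m$-th band'' $u^{(m)}:=\sum_n a_{m,n}e^{-it\lambda_{m,n}^2}\varphi_{m,n}(x)e^{iny}$, restricted to dyadic frequency scales $|n|\sim N$, behaves (up to errors vanishing as $N\to\infty$) like the half-wave evolution $e^{-it(2m+1)|D_y|}$ in $y$, tensored with a Gaussian $x$-profile concentrated at $x=0$. Since $\omega=(-1,1)\times I$ is a full horizontal strip and the $x$-profile lies well inside $(-1,1)$, the $x$-integration in $\int_0^{T_0}\|u^{(m)}(t)\|_{L^2(\omega)}^2\,dt$ simply recovers the mass of the profile, and the observability question for $u^{(m)}$ reduces to that of the 1D half-wave on $\T$ from $I$ at propagation speed $(2m+1)$. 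By the classical geometric control condition for the 1D wave equation on the circle, this holds for $T_0>\mathcal{L}(\omega)/(2m+1)$. The slowest mode, $m=0$, gives exactly the stated threshold $T_0>\mathcal{L}(\omega)$.

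To assemble the mode-wise estimates into a global inequality I would (i) sum over dyadic scales $N$ with uniform constants, which is straightforward because the half-wave observability constant is $N$-independent; (ii) eliminate the cross-terms between $u^{(m)}$ and $u^{(m')}$ for $m\neq m'$ via an Ingham-type time-orthogonality argument exploiting the $O(|n|)$ gap $\lambda_{m,n}^2-\lambda_{m',n}^2\approx 2(m-m')|n|$; and (iii) absorb the low-frequency truncation $|n|\leq N_0$, $m\leq M_0$ by a standard compactness + Holmgren-type unique continuation argument in the spirit of \cite{BLR}, exploiting that the truncated space is finite-dimensional. The main obstacle is to make the mode decomposition uniform in $m$: the Hermite asymptotics are sharp only when $m\ll |n|$; in the transitional regime $m\sim|n|$ the Gaussian approximation degrades and Dirichlet boundary effects become non-negligible, while for $m\gtrsim|n|$ the problem is effectively a Dirichlet Laplacian on $(-1,1)$. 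Handling these transitional and high-$m$ regimes uniformly — via a refined WKB analysis with a controlled lower bound on $\partial_n\lambda_{m,n}^2$, or a separate quasi-1D treatment in $x$ — and glueing with the semiclassical regime with uniform constants, is where the bulk of the technical work will lie.
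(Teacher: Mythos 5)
Your heuristic for the time threshold is correct and matches the paper's own outline (Section 2.1): on the plane $\Delta_G$ decomposes into transports $\pm(2m+1)\partial_y$ over the Hermite bands, and the slowest band $m=0$ is what forces $T_0>\mathcal{L}(\omega)$. However, the assembly you propose has a genuine gap. Your Ingham-type argument to kill cross-terms between bands $m$ and $m'$ relies on the separation $\lambda_{m,n}^2-\lambda_{m',n}^2\approx 2(m-m')|n|$, but this gap holds only at \emph{fixed} $n$: once you sum over $n$ in a dyadic window $|n|\sim N$, band $m$ occupies the frequency interval roughly $[(2m+1)N,\,2(2m+1)N]$, which overlaps band $m+1$ for every $m\geq 1$ and, more generally, all bands $m'\in[m,2m]$ intersect band $m$. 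There is no uniform spectral gap to feed into an Ingham inequality, and time-averaging over a fixed window $(0,T_0)$ will not decouple the $m$-sum. You also correctly flag, but do not resolve, the failure of the Hermite/Gaussian asymptotics when $m\gtrsim|n|$; there the group velocity is no longer affine in $n$ with a clean slope, the $x$-profile fills all of $(-1,1)$, and the decoupled ``transport in $y$ times Gaussian in $x$'' picture breaks down, so a ``refined WKB'' patch is not really a strategy.

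The paper's proof is organized so as to avoid both obstructions. It does \emph{not} decompose by $m$: it first localizes spectrally with $\psi(h^2\Delta_G)$ (so $\lambda_{m,n}\sim h^{-1}$) and then performs a second microlocalization in $D_y$; inside each such window only an $O(1)$ range of Hermite indices contributes, so no orthogonality between bands is ever needed. The critical regime $|D_y|\sim h^{-2}$ (i.e. $m=O(1)$), which fixes the threshold, is handled not by explicit Hermite asymptotics but by the positive commutator $[-\Delta_G,\,x\partial_x+2y\partial_y]=-2\Delta_G$ together with an elliptic estimate (Proposition~\ref{elliptique}) that confines the solution to $|x|\lesssim\hbar^{1/2}$. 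The regime $|D_y|\sim h^{-1}$ is treated by semiclassical defect measures and geometric control (Section~\ref{sec.5}), and the regime $|D_y|\ll h^{-1}$ — precisely your problematic $m\gtrsim|n|$ case — by a horizontal-propagation commutator estimate (Section 6) together with a normal-form conjugation \`a la Burq--Zworski reducing to the flat Schr\"odinger equation on a torus (Section 7). Only at the very end is the Bardos--Lebeau--Rauch compactness and unique-continuation argument you allude to invoked, to pass from the high-frequency estimates to the full inequality. In short, your step (ii) (Ingham) would fail as stated, and your treatment of the non-semiclassical bands would need to be replaced by a mechanism of comparable power — a second microlocalization plus normal form, or something equivalent — before the plan could go through.
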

	Consequently, by the standard HUM method (\cite{Li}, see for example \cite{BZ4} in the context of Schr\"odinger equation), we obtain the following exact controllability:
	\begin{thm}\label{control:exact}
		Assume that $T_0>\mathcal{L}(\omega)$, then for any $u_0\in L^2(\Omega)$, there exists $g\in L^2((0,T_0)\times \omega)$, such that the solution of the equation
		$$ i\partial_tu+\Delta_Gu=\mathbf{1}_{\omega}g,\quad u|_{t=0}=u_0
		$$
		satisfies $u(T_0,\cdot)=0$.
	\end{thm}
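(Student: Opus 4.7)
The plan is to deduce Theorem \ref{control:exact} from Theorem \ref{positive-bounded} by the Hilbert Uniqueness Method (HUM) of J.-L. Lions. Since the Grushin Laplacian $\Delta_G$ is self-adjoint on $D(\Delta_G)$, the Schr\"odinger flow $e^{it\Delta_G}$ is a unitary group on $L^2(\Omega)$; in particular the equation is time-reversible, so the adjoint of the control-to-state map is obtained by solving the same equation backward in time. Thus the observability inequality \eqref{thm-observability} is, up to time reversal, precisely the dual inequality required for null-controllability at time $T_0$.

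More concretely, I would fix $u_0\in L^2(\Omega)$ and introduce on $L^2(\Omega)$ the functional
\begin{equation*}
J(v_1)=\tfrac12\int_0^{T_0}\|v(t)\|_{L^2(\omega)}^2\,dt+\mathrm{Re}\,\langle u_0,v(0)\rangle_{L^2(\Omega)},
\end{equation*}
where $v(t)=e^{i(t-T_0)\Delta_G}v_1$ is the solution of the homogeneous equation with terminal datum $v(T_0)=v_1$. Continuity of $J$ on $L^2(\Omega)$ is immediate from mass conservation. For coercivity, one first reverses time (the map $v(t)\mapsto v(T_0-t)$ intertwines the backward and forward flows up to complex conjugation) and then applies \thmref{positive-bounded} together with $\|v_1\|_{L^2(\Omega)}=\|v(0)\|_{L^2(\Omega)}$ to obtain
\begin{equation*}
\|v_1\|_{L^2(\Omega)}^2\leq C_{T_0}\int_0^{T_0}\|v(t)\|_{L^2(\omega)}^2\,dt,
\end{equation*}
whence $J(v_1)\geq \frac{1}{2C_{T_0}}\|v_1\|^2-\|u_0\|\,\|v_1\|$. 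Since $J$ is strictly convex, continuous and coercive, it admits a unique minimizer $v_1^*$ with associated backward solution $v^*$.

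Writing the Euler--Lagrange equation $DJ(v_1^*)[w_1]=0$ for every $w_1\in L^2(\Omega)$ yields
\begin{equation*}
\mathrm{Re}\,\langle u_0,w(0)\rangle_{L^2(\Omega)}=-\int_0^{T_0}\mathrm{Re}\,\langle v^*(t),w(t)\rangle_{L^2(\omega)}\,dt.
\end{equation*}
Setting $g:=i\,v^*\rest_{(0,T_0)\times\omega}\in L^2((0,T_0)\times\omega)$ and letting $u$ solve
\begin{equation*}
i\partial_t u+\Delta_G u=\mathbf{1}_\omega g,\qquad u|_{t=0}=u_0,
\end{equation*}
a standard duality computation (multiplying by $\overline{w}$ and integrating by parts using self-adjointness of $\Delta_G$ and the Dirichlet condition) gives the identity
\begin{equation*}
\langle u(T_0),w_1\rangle-\langle u_0,w(0)\rangle=-i\int_0^{T_0}\langle g,w\rangle_{L^2(\omega)}\,dt,
\end{equation*}
which combined with the Euler--Lagrange identity forces $\langle u(T_0),w_1\rangle=0$ for all $w_1\in L^2(\Omega)$, hence $u(T_0,\cdot)=0$.

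Since the observability of Theorem \ref{positive-bounded} does all the analytic work, there is no genuine obstacle here; the only point requiring a touch of care is the bookkeeping between the forward controlled problem and the backward dual evolution, and the identification of the scalar-field constants (factors of $i$ and complex conjugates) that arise because the underlying inner product is Hermitian rather than symmetric. Both are settled by the time-reversibility and self-adjointness of $\Delta_G$.
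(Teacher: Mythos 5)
Your proposal is correct and is exactly the argument the paper has in mind: the paper proves only the observability inequality (Theorem~\ref{positive-bounded}) and then invokes the standard HUM duality (citing \cite{Li} and \cite{BZ4}) without spelling it out, which is precisely what you have done (minimizing the HUM functional, using mass conservation for continuity, the observability estimate for coercivity, and the duality pairing plus the Euler--Lagrange identity for $u(T_0)=0$). One small simplification: the ``time reversal'' step is unnecessary, since $v(t)=e^{i(t-T_0)\Delta_G}v_1=e^{it\Delta_G}v(0)$ is already a forward solution with initial datum $v(0)$, and $\|v(0)\|=\|v_1\|$ by unitarity, so Theorem~\ref{positive-bounded} applies directly.
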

	Our next result shows that $T_0>\mathcal{L}(\omega)$ is also necessary for the observability. This will be done by constructing examples which contradicts the observability, if $T_0\leq \mathcal{L}(\omega)$. The precise statement is as follows:
	\begin{thm}\label{thm-negative}
		Assume that $T_0\leq \mathcal{L}(\omega)$, then there exists a sequence $(u_{k,0})\subset L^2(\Omega)$, such that 
		$$ \|u_{k,0}\|_{L^2(\Omega)}=1, \textrm{ and }
		\lim_{k\rightarrow\infty}\int_{0}^{T_0}\|e^{it\Delta_G}u_{k,0}\|_{L^2(\omega)}^2dt=0.
		$$
	\end{thm}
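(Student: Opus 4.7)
The plan is to exhibit a sequence of normalized wave packets concentrating along the degenerate line $\{x=0\}$ and localized in $y$ inside the largest gap $J = (a, a+L) \subset \T \setminus I$ of length $L = \mathcal L(\omega)$, which propagate in $y$ at unit speed and hence stay essentially inside $J$ (avoiding $\omega$) throughout $[0, T_0]$. The key ingredient is the semiclassical asymptotics for the Dirichlet ground states of $L_n = -\partial_x^2 + n^2 x^2$ on $(-1, 1)$: as $n \to +\infty$ one has $\lambda_{0,n}^2 = n + O(e^{-cn})$, and $\varphi_{0, n}$ is $L^2$-exponentially close to the truncated renormalized Gaussian $(n/\pi)^{1/4} e^{-n x^2/2}$. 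These Agmon-type bounds for the quantum harmonic oscillator imply in particular that for $n, n'$ both close to a large reference frequency $n_k$ (say $|n - n_k|, |n' - n_k| \leq n_k/2$),
\[
\langle \varphi_{0,n}, \varphi_{0,n'} \rangle = 1 + O\bigl(((n-n_k)^2 + (n'-n_k)^2)/n_k^2\bigr),
\]
so that the ground-state dispersion relation $n \mapsto \lambda_{0,n}^2 \sim n$ is affine with group velocity $1$ and no spreading at leading order.

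Concretely, fix a bump $\varphi \in C_c^\infty((0, 1))$ with $\|\varphi\|_{L^2(\RR)} = 1$ and sequences $\epsilon_k > 0$, $n_k \to \infty$ with $n_k \epsilon_k \to \infty$ (for $T_0 < L$ one may take $\epsilon_k$ fixed with $\epsilon_k < L - T_0$; for $T_0 = L$ take for instance $\epsilon_k = 1/k$ and $n_k = k^3$). Put $g_k(y) = \epsilon_k^{-1/2} \varphi((y-a)/\epsilon_k)$, supported in $(a, a+\epsilon_k) \subset J$, and define the initial data via its spectral expansion
\[
u_{k,0}(x,y) = \sum_{n \in \Z} \hat g_k(n - n_k)\, \varphi_{0, n}(x)\, e^{iny},
\]
which is $L^2(\Omega)$-normalized thanks to Parseval and the orthonormality of the $\varphi_{0,n}$ for each fixed $n$. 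Expanding the square of $u_k(t, x, y) = \sum_n \hat g_k(n-n_k) \varphi_{0, n}(x) e^{in(y-t)} e^{-it(\lambda_{0,n}^2 - n)}$ and integrating in $y$ against $\chi_I$ yields
\[
\|u_k(t)\|_{L^2(\omega)}^2 = \sum_{n, n'} \hat g_k(n - n_k) \overline{\hat g_k(n'-n_k)}\, \langle \varphi_{0,n}, \varphi_{0,n'} \rangle\, e^{-it(\lambda_{0,n}^2 - \lambda_{0,n'}^2)}\, \hat\chi_I(n'-n).
\]
Replacing $\lambda_{0,n}^2 \mapsto n$ and $\langle \varphi_{0,n}, \varphi_{0,n'}\rangle \mapsto 1$ collapses the leading term to $\int_\T \chi_I(y) |g_k(y-t)|^2\, dy = \int_{I-t} |g_k|^2\, dz$. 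Integrating in $t \in (0, T_0)$, the shifted support $(a+t, a+\epsilon_k+t)$ lies in $J$ and is disjoint from $I$ except when $t \in (L - \epsilon_k, T_0]$, a set of measure at most $\max(0, \epsilon_k - (L - T_0))$; hence the leading contribution is bounded by this quantity, which tends to $0$.

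The main technical obstacle is controlling the errors in the two approximations above. The phase error $|\lambda_{0,n}^2 - n| = O(e^{-cn})$ is harmless once the sum is restricted to $|n - n_k| \leq n_k/2$; the Schwartz decay of $\hat g_k$ (inherited from $\hat\varphi$) makes the discarded tail summable to any polynomial order $O_N((n_k \epsilon_k)^{-N})$. The overlap error $|\langle \varphi_{0,n}, \varphi_{0,n'}\rangle - 1|$ generates, after pairing with the $\hat g_k$ coefficients and the bound $|\hat\chi_I(n'-n)| \leq 2\pi$, a term of order $(n_k^2 \epsilon_k^3)^{-1}$. With the choice of $n_k, \epsilon_k$ above, both error terms vanish in the limit, whence $\int_0^{T_0} \|u_k(t)\|_{L^2(\omega)}^2\, dt \to 0$ while $\|u_{k,0}\|_{L^2(\Omega)} = 1$, contradicting any observability inequality on $(0, T_0) \times \omega$.
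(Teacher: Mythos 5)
Your approach is correct and takes a genuinely different route from the paper's, even though both proofs rest on the same analytic foundation (the Agmon-type asymptotics $\lambda_{0,n}^2 = n + O(e^{-cn})$ and Gaussian closeness of $\varphi_{0,n}$, proved in the paper as Proposition \ref{estimate:eigenvalue}). The paper centers a Gaussian beam of physical width $h_n$ at the \emph{middle} of the gap and shows that $u_n(t,x,y)$ is small pointwise on $\omega$ by applying the Poisson summation formula and then integrating by parts in the frequency variable $w$, which requires the derivative bounds on the eigenvalue $\lambda^{(k)}(w)$ and on the phase $\partial_w^k\Phi_m$ (items (1)--(3) of Proposition \ref{estimate:eigenvalue}) to make the non-stationary phase argument work; the endpoint $T_0 = \mathcal L(\omega)$ is then reached through a soft ``observability at $T$ implies observability at $T-\delta$'' openness argument. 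You instead place a compactly supported bump at the \emph{left endpoint} of the gap, expand $\|u_k(t)\|_{L^2(\omega)}^2$ directly as a double Fourier sum, and pull out the transported profile $\int_I|g_k(y-t)|^2 dy$ as the leading term, controlling the remainder through (i) the exponential smallness of $\lambda_{0,n}^2 - n$, and (ii) a ground-state overlap estimate $\langle\varphi_{0,n},\varphi_{0,n'}\rangle = 1 + O((n-n')^2/n_k^2)$. This buys you two things: you never need Poisson summation nor the higher-order phase derivative estimates, and you reach the critical time $T_0 = \mathcal L(\omega)$ directly by sending $\epsilon_k \to 0$ with $n_k\epsilon_k \to \infty$. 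The cost is the overlap estimate, which is not stated in the paper; it does follow from $\|p\|_{L^2}=1$, $\langle p,\partial_w p\rangle = 0$, and the pointwise bounds $|\partial_w^k p|\le Cw^{1/4-k}$ ($k=1,2$) of Proposition \ref{estimate:eigenvalue}(4) via a two-step Taylor expansion in $w$, although those pointwise bounds only give the slightly weaker $O(|jj'|n_k^{-3/2} + (j^2+j'^2)n_k^{-7/4})$ rather than your stated $O((j^2+j'^2)n_k^{-2})$ --- this is harmless, since with $\epsilon_k=k^{-1}$, $n_k=k^3$ the resulting error is still $O(k^{-3/2})\to 0$, but you should be aware that obtaining the sharp $n_k^{-2}$ rate would require $L^2$ (not pointwise) bounds on $\partial_w p$, which the paper does not record. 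With this caveat noted, the argument is complete.
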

	 The observability for Schr\"odinger equation is related to the long-time behaviour of the semi-classical Schr\"odinger equation. Starting from initial data oscillating at  space frequency of order $h^{-1}$, a simple time rescalling gives
	 $$ i \partial_t u + \Delta u =0 \Leftrightarrow  ih\partial_sv+h^2\Delta v=0, v(s,x) = u(hs, x).$$

	 If the geometric control condition is fullfilled, the concentration outside $\omega$ cannot occur beyond the semi-classical time scale $s\sim 1$. This in turn tells us $t\sim h$, which is responsible for the short time observability result of Lebeau \cite{Le}. In the instable cases (torus, disk), we can see dispersion by understanding propagation beyond the semi-classical time scale $s\sim \epsilon h^{-1}$, $\epsilon\ll 1$, and this is responsible for the observability results in \cite{BZ4} \cite{AM},\cite{ALM} at the classical time scale $\epsilon$. 
	 To the best of our knowledge, Theorem \ref{positive-bounded} is the first result for the Schr\"odinger type equations where we can track dispersion all the way up to the  exact semi-classical time scale $s=\mathcal{L}(\omega)h^{-1}$.

	\begin{rem}
		The method we used in the proof of Theorem \ref{control:exact}, Theorem \ref{thm-observability} and Theorem \ref{thm-negative} would allow one to obtain similar results for more general hypoelliptic operator
		of the form
		$$ P=\partial_x^2+V(x)\partial_y^2,
		$$
		where $V(x)\in C^{\infty}([-1,1])$, satisfying
		$$ V(x)>0,\textrm{ if }x\neq 0, \quad V(x)\sim x^2, V'(x)\sim x, V''(x)\sim 1,x\rightarrow 0. 
		$$
		A typical example is $V(x)=\sin^2(\pi x)$.
	\end{rem}
	
	\begin{rem}
		Though the control domain $\omega$ we considered here is horizontal strips, with a little effort, one can prove the exact controllability with the same time threshold $T_0>\mathcal{L}(\omega)$, when $\omega$ contains a horizontal strip. Moreover, from the concentration of the first eigenfunction of the semi-classical harmonic oscillator $-\partial_x^2+n^2x^2$, the exact controllability is untrue, for any $T>0$, if $\Omega\setminus\omega$ contains a neighborhood of the degenerate line $\{x=0 \}$. It would be interesting to know whether the time-optimal observability (exact controllability) is true, if $\omega$ is only a neighborhood of some verticle interval $[(0,y_1), (0,y_2) ]$.
	\end{rem}
	\begin{center}
		\begin{tikzpicture}
		\draw[->] (-3,0) -- (3,0);
		\draw (3,0) node[right] {$x$};
		\draw [->] (0,-1.75) -- (0,1.75);
		\draw (0,1.75) node[above] {$y$};
		\draw (-2.5,-1.25) rectangle (2.5,1.25);
		\draw[fill=blue!40] (-2.5,0.55) rectangle (2.5,1.25);
		\draw[fill=blue!40] (-2.5,-1.25) rectangle (2.5,-0.55);
		\draw[black] (0.25,0.75) node[above] {$\omega$};
		\draw[black] (0.25,-0.75) node[below] {$\omega$};
		\draw[fill=blue!40] (-2.5,0.55) .. controls (-2,0) and (-1,1) .. (0,0.55)
		..controls (1,0) and (2,1) .. (2.5,0.55);
		\draw[fill=blue!40] (-2.5,-0.55) .. controls (-2,0) and (-1,-1) .. (0,-0.55)
		..controls (1,0) and (2,-1) .. (2.5,-0.55);
		\draw[red] [|<->|] (0,-0.55) -- (0,0.55);
		\draw[red] (0.5,-0.4) node[above] {$ \mathcal{L} (\omega)$};
		\end{tikzpicture}
	\end{center}
	
	\begin{rem}
	It has been shown recently (see \cite{Let}) that subelliptic wave equations are never observable. Though the non-observability in \cite{Let} is proved for the general sub-Riemannian setting, in the special case of the Grushin-wave equation, it is possible to provide a more straightforward proof. As a comparison to the wave-type equation, in Appendix, we present a direct proof of Theorem 1 in \cite{Let} for the Grushin-wave equation.  
	\end{rem}
	
	\subsection*{Acknwoledgement}
	The first author is supported by Institut Universitaire de France and ANR  grant ISDEEC, ANR-16-CE40-0013,  and the second author is supported by ANR grant ODA (ANR-18-CE40-
	0020-01). The problem considered here was inspired by a talk of Armand Koenig. The authors would like to thank Richard Lascar, for his interest, especially for pointing out the reference \cite{Melrose1}. We are grateful to Fabricio Maci\`a for a discussion while the second author attended the Journ\'ees EDPist 2019, Obernai.

	\section{Reformulation of the problem and outline of the proofs}
 Assume that $\omega=(-1,1)_x\times \cup_{k=1}^N (c_k,d_k)$, where $(c_k,d_k)$ are disjoint intervals of $\T$. For $\alpha_0>0$, define the smooth function $\phi\in C^{\infty}(\T)$ by	
		\begin{equation}\label{phi}
	\phi(y)=\sum_{j=1}^N\phi_j(y),\quad\text{ where } \phi_j(y) = \begin{cases} &1 \text{ if } y\in (c_j,d_j) \textrm{ and } \text{dist}(y,\frac{c_j+d_j}{2})\leq \frac{|d_j-c_j|}{2}-\alpha_0 \\
	&0 \text{ otherwise }.
	\end{cases}
	\end{equation}
Since $T_0>\mathcal{L}(\omega)$, for 
$$ \alpha_0<\min\big\{|d_k-c_k|/2,(T_0-\mathcal{L}(\omega))/2; k=1,2\cdots,N \big\},
$$
we denote by $a_0=\frac{\mathcal{L}(\omega)}{2}+\alpha_0$. Hence we reformulate Theorem \ref{positive-bounded} as follows:
	\begin{thm}\label{prop-thm1}
		Assume that $T>a_0$, then for any $u_0\in L^2(\Omega)$, we have
		\begin{equation}\label{propo-thm1-1}
		\|u_0\|_{L^2(\Omega)}^2\leq C_{T,a_0}\int_{-T}^T\|\phi(y)e^{it\Delta_G}u_0\|_{L^2(\Omega)}^2dt.
		\end{equation}	
	\end{thm}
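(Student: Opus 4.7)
The strategy combines the explicit spectral decomposition of $\Delta_G$ with a wave-packet / transport argument in $y$. Expanding
\[
e^{it\Delta_G}u_0(x,y) = \sum_{m\in\N,\,n\in\Z} a_{m,n}\,e^{-it\lambda_{m,n}^2}\varphi_{m,n}(x)\,e^{iny}, \qquad \|u_0\|_{L^2(\Omega)}^2 \asymp \sum_{m,n}|a_{m,n}|^2,
\]
the key spectral input is the semi-classical analysis of $L_n=-\partial_x^2+n^2x^2$ on $(-1,1)$ with Dirichlet boundary. Setting $h=1/|n|$, the eigenfunctions $\varphi_{m,n}$ are close to rescaled Hermite functions concentrated in an $|n|^{-1/2}$-neighbourhood of $x=0$; the Dirichlet boundary is exponentially negligible; and $\lambda_{m,n}^2=(2m+1)|n|+r_{m,n}$ with $r_{m,n}=O(1)$ for $m$ in the bulk range $m\lesssim|n|$. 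This asymptotic exhibits dispersion branches indexed by $m$, each with group velocity $\pm(2m+1)$ in $y$; the slowest branch $m=0$ travels at speed $1$, pinning down the sharp critical time $\mathcal{L}(\omega)/2$.

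For each fixed $m$ I would establish branch observability for $U_m(t,x,y)=\sum_{n\neq 0}a_{m,n}e^{-it\lambda_{m,n}^2}\varphi_{m,n}(x)e^{iny}$ as follows. Modulo $r_{m,n}$, the positive- and negative-frequency parts $U_m^\pm$ reduce to translates $G_m^\pm(x,\,y\mp(2m+1)t)$ of the profiles $G_m^\pm(x,\sigma)=\sum_{\pm n>0}a_{m,n}\varphi_{m,n}(x)e^{in\sigma}$. A Fubini change of variables $\sigma=y\mp(2m+1)t$ gives
\[
\int_{-T}^T\!\!\int_\Omega \phi(y)^2|U_m^\pm(t,x,y)|^2\,dx\,dy\,dt = \int_{-1}^1\!\!\int_\T|G_m^\pm(x,\sigma)|^2\,\frac{1}{2m+1}\!\!\int_{\sigma-(2m+1)T}^{\sigma+(2m+1)T}\!\!\phi(y)^2\,dy\,d\sigma\,dx.
\]
Since $2T>\mathcal{L}(\omega)+2\alpha_0$, the sliding window of length $2(2m+1)T\geq 2T$ always covers a plateau $\{\phi\equiv 1\}$ of length at least $2(T-a_0)>0$, and for $m\to\infty$ it wraps around $\T$ proportionally to $m$; a short case analysis bounds the inner integral below by $c=c(T,\alpha_0)>0$, uniformly in $\sigma$ and $m$. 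Combined with the orthonormality of $\{\varphi_{m,n}\}_n$ in $x$ and $\{e^{in\sigma}\}_n$ in $\sigma$, this yields $\int_{-T}^T\!\!\int\phi^2|U_m^\pm|^2 \gtrsim \sum_{\pm n>0}|a_{m,n}|^2$, and hence a uniform branch observability for $U_m$ after controlling the cross term $U_m^+\overline{U_m^-}$ via the Fourier decay of $\phi^2$ and a time-averaging / Ingham-type estimate based on the spectral gap $|\lambda_{m,n+1}^2-\lambda_{m,n}^2|\gtrsim 2m+1$, which also absorbs the residual phase twists generated by the $r_{m,n}$'s.

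To conclude, I would sum the branch estimates using that $\{\varphi_{m,n}(x)e^{iny}\}_{m,n}$ is an orthonormal basis of $L^2(\Omega)$: for fixed $n$ the $\varphi_{m,n}$ are orthonormal in $m$, and multiplication by $\phi(y)$ commutes with the $y$-Fourier decomposition, so the branch contributions to $\int\phi^2|u|^2$ are orthogonal at leading order. The finitely many low modes $|n|\leq N_0$, where the semi-classical asymptotic is not yet effective, are treated by a compactness plus unique-continuation argument: any normalized sequence contradicting the observability would, after extraction, converge weakly to a nonzero solution of $i\partial_t u+\Delta_G u=0$ spectrally supported in $|n|\leq N_0$ and vanishing on the open set $(-T,T)\times\{\phi>0\}$, contradicting analyticity in $t$ of the finite-dimensional spectral piece. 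The main obstacle is the uniform control of $r_{m,n}$, of the cross-branch term, and of the summation over $m$ on the long time scale $T\approx a_0$: this appears to be the first Schr\"odinger-type problem in which dispersion must be tracked all the way up to the full semi-classical time $\mathcal{L}(\omega)h^{-1}$, and making the wave-packet picture rigorous beyond leading order calls for a two-parameter Ingham-type inequality adapted to the spectrum $\{\lambda_{m,n}^2\}$ combined with semi-classical propagation estimates for the harmonic oscillator.
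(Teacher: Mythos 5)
Your approach is genuinely different from the paper's and is conceptually attractive---the transport picture $\lambda_{m,n}^2\approx(2m+1)|n|$ giving ``branches'' moving at speed $2m+1$ in $y$ is exactly the heuristic that motivates the threshold $T>a_0$---but it has several gaps that are not merely technical.

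\emph{Where the transport approximation fails.} The approximation $\lambda_{m,n}^2=(2m+1)|n|+r_{m,n}$ with $r_{m,n}$ small (indeed exponentially small, not merely $O(1)$) holds only when the semiclassical turning points $|x|\sim\sqrt{(2m+1)/|n|}$ stay well inside $(-1,1)$, i.e.\ $m\ll|n|$. Fix a dyadic window $\lambda_{m,n}\sim h^{-1}$. Then $m\sim h^{-2}/|n|$, and the condition $m\ll|n|$ forces $|n|\gtrsim h^{-1}$. For $|n|\ll h^{-1}$ the eigenfunction is spread across the whole interval, the Dirichlet boundary is dominant, and $\lambda_{m,n}^2$ is not of the form $(2m+1)|n|+O(1)$; the Fubini/sliding-window computation built on $G_m^\pm(x,y\mp(2m+1)t)$ then has no content. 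The paper handles precisely this gap with two separate arguments (a horizontal-propagation positive-commutator estimate for $h^{-\epsilon/2}\lesssim|n|\lesssim h^{-1}$ in Section~6, and a normal-form reduction to the flat torus for $|n|\lesssim h^{-\epsilon/2}$ in Section~7), and those are not special cases of your transport scheme.

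\emph{The ``finitely many low modes'' step is not finite-dimensional.} The subspace $\{|n|\le N_0\}$ contains infinitely many $m$'s, so a weakly convergent sequence normalized in $L^2$ can converge weakly to zero and no contradiction is obtained. The correct compactness argument (carried out in Section~8 of the paper) starts from a uniform high-frequency estimate with a compact remainder \emph{in a negative Sobolev norm}, not from a spectral truncation to low $|n|$; that subtle difference is what makes the BLR-type unique-continuation step work.

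\emph{Cross terms and the Ingham step.} For $n'=-n$ one has $\lambda_{m,n}=\lambda_{m,-n}$ and $\varphi_{m,n}=\varphi_{m,-n}$, so the cross term $U_m^+\overline{U_m^-}$ carries \emph{no} oscillation in $t$; it is controlled only by $\widehat{\phi^2}(2n)$, which is $O(1)$ for $n$ of order one. Thus neither the Fourier decay of $\phi^2$ nor the spectral gap saves you at small $|n|$. You also need orthogonality of the branches after multiplication by $\phi(y)$: for $m\ne m'$ the coefficients $\langle\varphi_{m,n},\varphi_{m',n'}\rangle_{L^2_x}$ with $n\ne n'$ are nonzero (the $\varphi_{m,n}$ are eigenfunctions of \emph{different} operators as $n$ varies), so the branches interact. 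Finally, you openly defer the uniform-in-$m$ Ingham estimate and the control of $r_{m,n}$; these are not details but the crux of the problem, which is exactly what the paper's regime-by-regime decomposition (half-wave commutator estimate, semiclassical defect-measure propagation, normal form) replaces. Your write-up therefore sketches a plausible alternative route through direct spectral asymptotics and a sliding-window Ingham argument, but in its current form it does not close the three essential gaps above.
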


	Throughout this article, we will use the standard notation
	$D_y=\frac{1}{i}\partial_y, D_x=\frac{1}{i}\partial_x$.
	\subsection{Outline of the non-observability for \texorpdfstring{$T_0\leq \mathcal{L}(\omega)$}{T0< L(omega)}}
	Unlike the classical Schr\"odinger equation, the Grushin-Schr\"odinger equation exhibits the features of both wave equation and dispersive equations. To understand the heuristics, consider Grushin-Schr\"odinger equation posed on $(x,y)\in\R^2$:
	$$  i\partial_tu+\Delta_Gu=0.
	$$ 
	The solution formula can be written explicitly 
	\begin{equation}\label{Grusin-free}
	u(t,x,y)=\frac{1}{\sqrt{2\pi}}\sum_{m=0}^{\infty}\int_{\R}e^{-it(2m+1)|\eta|+iy\eta}\widehat{f}_{m}(\eta)h_m(\sqrt{|\eta|}x)d\eta,
	\end{equation}
	where $h_m(x)$ is the $m$-th Hermite function, eigenfunction of the harmonic oscillator $-\partial_x^2+x^2$. As explained in \cite{Ge}, there exists an orthogonal decomposition
	$$ L^2(\R^2)=\oplus_{\pm}\oplus_{m=0}^{\infty}V_m^{\pm},\quad \Delta_G|_{V_m^{\pm}}=\pm i(2m+1)\partial_y,
	$$
	hence \eqref{Grusin-free} can be viewed as a coupled system of transport equations:
	$$ i(\partial_t\pm (2m+1)\partial_y)u_m^{\pm}=0.
	$$
	The $m$-th wave $u_m^{\pm}$ propagates on the vertical direction of velocity $2m+1$. Actually, the bottom spectral portion $m=0$ is responsible for the restriction $T_0>\mathcal{L}(\omega)$ for the observability. For larger $m$, the propagation speed of the portion $u_m^{\pm}$ is larger, and this can be interpreted as ``dispersion". In the section 9, we will construct concrete example to disprove the observability inequality if $T_0\leq \mathcal{L}(\omega)$. Unlike the equation \eqref{Grusin-free} posed on the whole space, we have no explicit formula for the first eigenfunction of the semi-classical harmonic oscillator $-\partial_x^2+\eta^2x^2$ with Dirichlet boundary condition. The construction is then based on the careful quantitative analysis of the first eigenvalue and eigenfunction of the semi-classical harmonic oscillator.

	\subsection{Outline of the proof of Observability if \texorpdfstring{$T_0>\mathcal{L}(\omega)$}{T0> L(omega)}}
	For simplicity, we shall first study the case where there is a single band in $\omega^c$, and by translation in $y$ we can assume that $\omega^c=(-1,1)_x\times (-a,a)$, hence $\mathcal{L}(\omega)=2a$, and $a_0\in (a,T_0/2)$. 
	Assume that $\psi\in C_c^{\infty}(\R;[0,1])$ such that
	$$ \psi(\zeta)\equiv 1,\textrm{ if } \frac{1}{2}\leq |\zeta|\leq 2 \textrm{ and } \psi(\zeta)\equiv 0
	\textrm{ if }|\zeta|<\frac{1}{4} \textrm{ or } |\zeta|>4. $$ From standard compactness argument, one can reduce the proof of Proposition \ref{prop-thm1} to the following spectral-localized version:
	\begin{prop}\label{ob:spectral-localized}
		Let $T>a_0$, then there exists $C_{T,a_0}>0$ such that for any $u_0\in L^2(\Omega)$ and all $0<h\ll 1$, we have
		$$ \|\psi(h^2\Delta_G)u_0\|_{L^2(\Omega)}^2\leq C_{T,a_0}\int_{-T}^{T}\|\phi(y)e^{it\Delta_G}\psi(h^2\Delta_G)u_0\|_{L^2(\Omega)}^2dt.
		$$ 
	\end{prop}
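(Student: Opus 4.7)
The plan is to exploit separation of variables in the eigenbasis $\Phi_{m,n}=\varphi_{m,n}(x)e^{iny}$. Since $\Delta_G$ commutes with $\partial_y$, the Fourier decomposition in $y$ gives
$$\psi(h^2\Delta_G)u_0=\sum_{n\in\Z}\psi(h^2L_n)U_n(x)e^{iny},\qquad L_n=-\partial_x^2+n^2x^2,$$
which is orthogonal in $n$, and the evolution $e^{it\Delta_G}$ acts fiber-wise by $e^{-itL_n}$. The spectral window $\psi(h^2\,\cdot\,)$ forces $\lambda_{m,n}\sim h^{-1}$. I then split admissible pairs $(m,n)$ according to whether the turning points $|x|=\lambda_{m,n}/|n|$ of $L_n-\lambda_{m,n}^2$ lie strictly inside $(-1,1)$: an \emph{oscillator regime} $\lambda_{m,n}\leq(1-\delta)|n|$, where standard Agmon decay estimates yield $\varphi_{m,n}(x)=|n|^{1/4}h_m(\sqrt{|n|}\,x)+O_{L^2}(e^{-c|n|})$ and $\lambda_{m,n}^2=(2m+1)|n|+O(e^{-c|n|})$ (with $h_m$ the Hermite functions), and a \emph{boundary regime} $\lambda_{m,n}\geq(1-\delta)|n|$, where the Dirichlet condition enters decisively and the operator is semiclassically close to $-\partial_x^2$ on $(-1,1)$.

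The boundary regime is handled by the classical Schr\"odinger observability of Lebeau \cite{Le}: the semiclassical symbol in this range yields bicharacteristics that bounce off $x=\pm 1$ and propagate in $y$ at nonzero speed whenever $n\neq 0$, so the geometric control condition is met by any horizontal strip $\omega$ and observability holds for arbitrarily small $T>0$; the $n=0$ mode is $y$-independent and trivially observable via $\phi(y)$. The core of the argument is the oscillator regime, which I split further in $m$. For $m\geq M_0$ (a large but fixed integer) the group velocity $2m+1$ in $y$ is so large that wave packets cross every gap of $\{\phi=1\}^c\subset\T$ many times during $(-T,T)$, yielding observability with room to spare. For each fixed $m<M_0$, the fiber-wise evolution of an oscillator packet reduces, up to errors of size $O(e^{-c/h})$, to a bi-directional transport at speed $2m+1$ in $y$ on $\T$. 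The corresponding 1D observability weighted by $\phi^2$ holds because the longest interval on which $\phi$ vanishes has length at most $\mathcal L(\omega)+2\alpha_0=2a_0$, so for $T>a_0$ any interval of length $2T>2a_0$ in $\T$ meets $\{\phi=1\}$ with uniformly positive measure. Summing the orthogonal regimes gives \eqref{propo-thm1-1}.

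The main obstacle will be the transition zone $\lambda_{m,n}\approx |n|$, where the Hermite approximation of $\varphi_{m,n}$ degrades but the boundary description is not yet accurate. A uniform quantitative Agmon-type estimate is required to preserve the exponentially small error in the transport approximation through this crossover, and the natural framework is a second microlocalization at $\{x=0,\xi=0\}$ in the semiclassical cotangent bundle, which precisely separates the two regimes. A secondary technical point is that the multiplication operator $\phi(y)$ couples different $y$-Fourier modes, but the rapid decay of $\widehat\phi$ combined with the localization of $\varphi_{m,n}$ at scale $|n|^{-1/2}$ makes the resulting off-diagonal contributions negligible, so the $n$-orthogonal decomposition remains essentially diagonal in the quadratic form being estimated.
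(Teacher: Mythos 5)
Your proposal takes a genuinely different route from the paper --- a fiber-wise spectral/Hermite analysis in place of the paper's microlocal decomposition in $|D_y|$ into half-wave, GCC, rapid-propagation, and normal-form regimes (Sections~3--7) --- but as written it has a fatal gap and two substantial ones. The fatal gap is the boundary regime $\lambda_{m,n}\geq(1-\delta)|n|$, i.e.\ $|n|\lesssim h^{-1}$, where you assert GCC and invoke Lebeau to get observability for all $T>0$. When $|n|\ll h^{-1}$ this fails: the semiclassical $y$-velocity along the bicharacteristics is $\dot y=2x^2\eta$ with $\eta=hn$, so the trajectories traverse $y$ at rate $O(h|n|)\to 0$ and the geometric control condition (in the semiclassical sense Lebeau's theorem requires) does not hold. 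Nor is the operator semiclassically close to $-\partial_x^2$: the potential $n^2x^2$ contributes at the averaged level. The paper's Section~7 handles this via a normal form conjugating to $\partial_x^2+M\partial_y^2$, with $M$ the mean of $x^2$, and then invokes torus Schr\"odinger observability (Jaffard, Burq--Zworski, Anantharaman--Maci\`a), a genuinely dispersive theorem, not a GCC argument.

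The second gap is the decoupling claim. Within a fixed $m$ the $n\neq n'$ cross terms through $\widehat{\phi^2}(n-n')$ are in no sense negligible: if they were, the remaining diagonal part of $\int_{-T}^{T}\|\phi u_m(t)\|_{L^2}^2\,dt$, which equals $2T\,\|\phi\|_{L^2(\T)}^2\,\|u_{m,0}\|_{L^2}^2$, would give observability for all $T>0$, contradicting the sharp threshold $T_*=\mathcal L(\omega)$. These cross terms are precisely what encodes the transport; they must be estimated, not discarded. Between distinct $m$ the cross terms require an Ingham-type argument exploiting the $O(h^{-2})$ spacing of $\{\lambda_{m,n}^2\}_m$ together with the time oscillation, which you neither supply nor reference. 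Finally, the uniform quantitative control of the Hermite approximation $\varphi_{m,n}\approx|n|^{1/4}h_m(\sqrt{|n|}\,\cdot)$ and of $\lambda_{m,n}^2-(2m+1)|n|$ across the transition zone $\lambda_{m,n}\approx|n|$ is the crux of making the transport picture rigorous and is deferred, not resolved. The paper avoids all of these difficulties at once by proving the half-wave estimate via the positive commutator $[\Delta_G,x\partial_x+2y\partial_y]=2\Delta_G$ with space--time cutoffs (Section~4), without ever decomposing in $m$ at all.
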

	Due to the anisotropic feature, we need to analyze different regimes. A basic observation is that $\partial_y$ commutes with $-\Delta_G$. When the spectrum of $\sqrt{-\Delta_G}$ is localized at the scale $ h^{-1}$, by the hypoellipticity, the vertical frequency $|D_y|$ is contrained to $|D_y|\lesssim h^{-2}$. In Section 3, we will prove this simple coercivity estimate. In the sequel, our  analysis will require a second microlocalization with respect to the variable $D_y$, which will in turn require a second microlocalization with respect to the variables $(x, D_x)$. Fortunately this will remain at a quite basic level  and appear only through the elementary elliptic regularity result Proposition~\ref{elliptique}.
	
	In section 4, we will deal with the regime:
	$$ \sqrt{-\Delta_G}=\sqrt{D_x^2+x^2D_y^2}\sim h^{-1}\ll |D_y|\lesssim h^{-2}.
	$$
	We introduce another scale $|D_y|\sim \hbar^{-1}$, and the half wave regime corresponds to  such that  $h^2\lesssim \hbar \ll h$. The main constranits in this regime come from the subregime (proper half-wave regime) when $\hbar\sim h^{2}$, and $|x|\lesssim \hbar^{1/2}$, since $|x|\gg \hbar^{1/2}$ is the classical forbidden region where the contributions are of order $O(h^{\infty})$, if we write
	$$ i\partial_t+\Delta_G=i\partial_t+\partial_x^2+\underbrace{ (\frac{1}{i}\hbar \partial_y)(\hbar^{-1/2} x)^2 }_{\textrm{uniformly bounded operator}}i\partial_y.
	$$
	
	It turns out that the operator $\frac{1}{i}\hbar\partial_y (\hbar^{-1/2} x)^2$ is also bounded from below, in an averaged sense. In this regime, the key tool will be the positive commutator relation
	$$ [-\Delta_G, x\partial_x + 2 y \partial_y] = -2 \Delta _G
	$$
	 Roughly speaking, the commutator relation 
	$$[i\partial_t+\Delta_G, \varphi_T(t)(x\partial_x+2y\partial_y) ]=2\Delta_G+i\varphi_T'(t)(x\partial_x+2y\partial_y)
	$$
	 allows one to control the energy norm $2T\times 2\big(\|x\partial_yu \|_{L^2}^2+\|\partial_xu\|_{L^2}^2\big)$ by the contributions inside the control region $\omega$ and $(\varphi_T'(t)2y\partial_yu,u)$. The difficulty is the fact that in the half-wave regime, $|x|\lesssim \hbar^{1/2}$, one cannot absorb the norm $\|\partial_yu\|_{L^2}$ by the energy norm. To overcome this difficulty, we observe that, heuristically, at $t=0$, the solutions are concentrated near $y=0$, then on the support of $\varphi_T'(t)$, $t\sim T$, the solutions should be propagated near $|y|=T>a$. With appropriate time cut-off $\varphi_T(t)$, this term $|(\varphi_T'(t)2y\partial_yu,u)|$ is essentially $$2a\times 2\times\hbar^{-1}\times \|u\|_{L^2}^2+\textrm{ errors }.$$
	By using the hypoellipticity, for the well-localized solution $u$, the main contribution turns out to be $$4a\times \big(\|x\partial_yu\|_{L^2}^2+\|\partial_xu\|_{L^2}^2 \big).$$ This allows us to conclude, provided that $T>a$.
	
	In the section 5, we discuss the semi-classical dispersive regime, in which the geometric control condition is fullfilled:
	$$ \sqrt{-\Delta_G}\sim |D_y|\sim h^{-1}.
	$$
	From the time rescaling trick introduced in \cite{Le}, it is then reduced to prove the observability estimate for the semi-classical Grushin Schr\"odinger equation
	\begin{equation}\label{semi-GCC} ih\partial_sw+(h\partial_x)^2w+x^2(h\partial_y^2)w=0.
	\end{equation}
	In the considered regime, the Hamiltonian flow is never degenerated and all geodesics enter the control region in finite time.  By a standard defect-measure based argument, we are able to prove the semi-classical propagation observability for \eqref{semi-GCC}. 
	
	In the sections 6 and 7, we will deal with the non-semiclassical dispersive regime. If $$h^{\epsilon}\lesssim |D_y|\ll \sqrt{-\Delta_G}\sim h^{-1},
	$$
	thanks to the rapid propagation in the horizontal variable $x$, we are still able to use the positive commutator method to detect the vertical propagataion, but in this case the arguments are simpler and any $T>0$ would work. Finally, if
	$$ |D_y|\ll h^{\epsilon}\ll \sqrt{-\Delta_G}\sim h^{-1},
	$$
	we will use a normal form transformation method, inspired by the work of the first author and M.~Zworski \cite{BZ4}, to convert $i\partial_t+\Delta_G$ to
	$ i\partial_t+\partial_x^2+M^2\partial_y^2
	$
	in this regime, modulo errors. Then an application of the observality theorem for the classical Schr\"odinger equation allows us to conclude.

\section{Coercivity estimate}
From hypoellipticity, we have the following simple coercivity estimate:
\begin{lem}\label{coercivity}
	For every $f\in D(\Delta_G)$, there holds
	$$ \||D_y|^{1/2}f\|_{L^2(\Omega))}^2\leq (-\Delta_Gf,f)_{L^2(\Omega)}.
	$$ 	
\end{lem}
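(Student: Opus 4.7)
The plan is to combine an integration by parts identity with a one-dimensional harmonic oscillator lower bound, obtained via a ``completing the square'' argument that exploits the Dirichlet boundary condition at $x=\pm 1$.

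First I would note that for $f\in D(\Delta_G)$ the Dirichlet boundary condition and integration by parts give
$$
\bigl((-\Delta_G+1)f,f\bigr)_{L^2(\Omega)}=\|f\|_{L^2(\Omega)}^2+\|\partial_x f\|_{L^2(\Omega)}^2+\|x\partial_y f\|_{L^2(\Omega)}^2,
$$
(with a standard density argument from $C^\infty_c$-like test functions if one wants to be careful about regularity). It therefore suffices to prove the gradient-type bound
$$
\||D_y|^{1/2}f\|_{L^2(\Omega)}^2\le \|\partial_x f\|_{L^2(\Omega)}^2+\|x\partial_y f\|_{L^2(\Omega)}^2.
$$

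Next I would Fourier-decompose in $y$: writing $f(x,y)=\sum_{n\in\ZZ}f_n(x)\,e^{iny}$ with $f_n\in H^2((-1,1))\cap H_0^1((-1,1))$, Parseval reduces the target estimate to the family of one-dimensional inequalities
$$
|n|\,\|f_n\|_{L^2(-1,1)}^2\le \|f_n'\|_{L^2(-1,1)}^2+n^2\|xf_n\|_{L^2(-1,1)}^2,\qquad n\in\ZZ.
$$
The case $n=0$ is trivial. For $n\neq 0$, I would expand the nonnegative quantity $\int_{-1}^{1}|(\partial_x+|n|x)f_n|^2\,dx\ge 0$: cross terms produce $2|n|\,\Re\!\int xf_n\overline{\partial_x f_n}\,dx=|n|\!\int\!\partial_x(x|f_n|^2)\,dx-|n|\|f_n\|^2$, and the Dirichlet condition $f_n(\pm 1)=0$ kills the boundary term $|n|\bigl[x|f_n|^2\bigr]_{-1}^{1}$. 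This yields exactly
$$
\|f_n'\|^2+n^2\|xf_n\|^2 = |n|\,\|f_n\|^2+\int_{-1}^{1}\bigl|(\partial_x+|n|x)f_n\bigr|^2\,dx\ \ge\ |n|\,\|f_n\|^2.
$$

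Summing in $n$ and adding $\|f\|_{L^2(\Omega)}^2$ gives the claimed coercivity estimate. The only potential subtlety is the approximation of a general $f\in D(\Delta_G)$ by smooth functions in order to justify the integration by parts, but this is handled by the standard density of $C^\infty$-functions vanishing on $\partial\Omega$ in $D(\Delta_G)$ for the Grushin Laplacian, and there is no genuine analytic obstacle in the argument; the only real ``content'' is the one-dimensional harmonic oscillator ground-state estimate obtained by the square-completion above.
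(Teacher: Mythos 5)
Your proof is correct, and it is essentially the paper's argument in a different but equivalent packaging: the paper expands $\mp i([\partial_x,x\partial_y]\Pi_\pm f,\Pi_\pm f)$ and applies Young's inequality, while you Fourier-decompose in $y$ and expand $\int_{-1}^1|(\partial_x+|n|x)f_n|^2\,dx\ge 0$, which is the same commutator/cross-term computation mode-by-mode (the projectors $\Pi_\pm$ in the paper play the role of $|n|$ in your version). Both hinge on the Dirichlet boundary condition killing the boundary term, so no new idea is introduced and no gap is present.
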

\begin{proof}
	By doing integration by part, we have
	$$(-\Delta_Gf,f)_{L^2(\Omega)}^2=\|\partial_xf\|_{L^2(\Omega)}^2+\|x\partial_yf\|_{L^2(\Omega)}^2.
	$$
	Denote by $\Pi_{+}, \Pi_-$, the projection to the strictly positive and the strictly negative frequencies in $y$ variable. Obviously, $\Pi_{\pm}$ commute with $\Delta_G, \partial_x, x\partial_y$. From the commutator relation $[\partial_x,x\partial_y]=\partial_y$, we have
	\begin{equation*}
	\begin{split}
	\||D_y|^{1/2}\Pi_{\pm}f\|_{L^2(\Omega)}^2=\mp i&(\partial_y\Pi_{\pm}f,\Pi_{\pm}f )_{L^2(\Omega)}=\mp i([\partial_x,x\partial_y]\Pi_{\pm}f,\Pi_{\pm}f )_{L^2(\Omega)}.
	\end{split}
	\end{equation*}
	After integration by part, we have
	$$ \||D_y|^{1/2}\Pi_{\pm}f\|_{L^2(\Omega)}^2= \pm2\Im(\partial_x\Pi_{\pm}f, x\partial_y\Pi_{\pm}f )_{L^2(\Omega)}.
	$$
	Thus
	$$ \||D_y|^{1/2}\Pi_{\pm}f\|_{L^2(\Omega)}^2\leq \|\partial_x\Pi_{\pm}f\|_{L^2(\Omega)}^2+\|x\partial_y\Pi_{\pm}f\|_{L^2(\Omega)}^2.
	$$
	The proof of Lemma \ref{coercivity} is now complete.
\end{proof}
Let $\chi_0\in C_c^{\infty}(\R;[0,1])$ such that 
$$ \chi_0(\zeta)\equiv 1,\textrm{ if }|\zeta|\leq 2 \textrm{ and } \chi_0(\zeta)\equiv 0 , \textrm{ if } |\zeta|>3.
$$
\begin{cor}\label{regime:elliptic}
	For $0<h<1$ and $e_0> \frac{1}{2}$,
	$$ \psi(h^2\Delta_G)\left(1-\chi_0(e_0h^2D_y) \right)=0.
	$$
\end{cor}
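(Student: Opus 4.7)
The plan is to diagonalize both operator factors simultaneously on the joint eigenbasis of $-\Delta_G$ and $D_y$, reducing the operator identity to a scalar inequality on each mode. Writing a general $f \in L^2(\Omega)$ in the basis $\Phi_{m,n}(x,y) = \varphi_{m,n}(x) e^{iny}$ introduced in the introduction, both $\psi(h^2 \Delta_G)$ and $\chi_0(e_0 h^2 D_y)$ act as multipliers on modes, with scalars $\psi(-h^2 \lambda_{m,n}^2)$ and $\chi_0(e_0 h^2 n)$ respectively. The corollary therefore reduces to verifying
\begin{equation*}
\psi(-h^2 \lambda_{m,n}^2)\bigl(1 - \chi_0(e_0 h^2 n)\bigr) = 0 \qquad \text{for every } (m,n) \in \mathbb{N} \times \mathbb{Z}.
\end{equation*}

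The key analytic ingredient I expect to need is the sharp lower bound $\lambda_{m,n}^2 \geq |n|$, which I would obtain from the harmonic-oscillator creation/annihilation algebra applied to the Dirichlet realization of $L_n$. For $n > 0$, the factorization
\begin{equation*}
L_n = -\partial_x^2 + n^2 x^2 = (nx + \partial_x)(nx - \partial_x) + n,
\end{equation*}
combined with integration by parts against $\varphi_{m,n}$ on $(-1,1)$ (the boundary terms vanish because $\varphi_{m,n}(\pm 1) = 0$), yields
\begin{equation*}
\lambda_{m,n}^2 \|\varphi_{m,n}\|^2 = \|(nx - \partial_x) \varphi_{m,n}\|^2 + n \|\varphi_{m,n}\|^2 \geq n \|\varphi_{m,n}\|^2.
\end{equation*}
The case $n < 0$ is handled by the reflected factorization, and $n = 0$ is trivial. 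This step is classical, and it is the only analytic input in the argument.

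Once the lower bound is in hand, the proof closes automatically. If $\psi(-h^2 \lambda_{m,n}^2) \neq 0$, then $h^2 \lambda_{m,n}^2$ lies in $\supp \psi \subset \{1/4 \leq |\zeta| \leq 4\}$, forcing $h^2 \lambda_{m,n}^2 \leq 4$. Combined with $\lambda_{m,n}^2 \geq |n|$ this gives $h^2 |n| \leq 4$, so $|e_0 h^2 n| \leq 4 e_0$; under the stated hypothesis on $e_0$, this lies in the plateau $\{|\zeta| \leq 2\}$ on which $\chi_0 \equiv 1$, whence $1 - \chi_0(e_0 h^2 n) = 0$. I do not anticipate any real obstacle: the only point requiring care is to arrange the factorization of $L_n$ so that the Dirichlet boundary condition cleanly eliminates boundary terms under integration by parts; the remainder is routine bookkeeping on supports.
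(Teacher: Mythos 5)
Your argument is in substance the same as the paper's. The paper packages the spectral lower bound $\lambda_{m,n}^2 \geq |n|$ as a coercivity estimate (Lemma~\ref{coercivity}, obtained from the commutator $[\partial_x, x\partial_y]=\partial_y$ and integration by parts, which is the same computation as your creation--annihilation factorization in operator form), applies it to $f_h = \psi(h^2\Delta_G)\bigl(1-\chi_0(e_0h^2D_y)\bigr)f$, and then reads off the incompatibility of the two spectral supports exactly as you do mode by mode. So the route is not genuinely different, but there are two concrete errors in your write-up.

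First, the factorization is written in the wrong order. A direct computation gives $(nx+\partial_x)(nx-\partial_x) = -\partial_x^2 + n^2x^2 + n = L_n + n$, so the identity $L_n = (nx+\partial_x)(nx-\partial_x) + n$ as displayed is off by $2n$. What you want, for $n>0$, is $L_n = (nx-\partial_x)(nx+\partial_x) + n$; then, since $nx-\partial_x$ is the $L^2((-1,1))$ adjoint of $nx+\partial_x$ on functions vanishing at $\pm1$, one gets $\lambda_{m,n}^2\|\varphi_{m,n}\|^2 = \|(nx+\partial_x)\varphi_{m,n}\|^2 + n\|\varphi_{m,n}\|^2 \geq n\|\varphi_{m,n}\|^2$. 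This is easily repaired, but the identity as written is false.

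Second, and more seriously, your final step does not close under the hypothesis you invoke. You arrive at $|e_0h^2 n| \leq 4e_0$ and need this to land in the plateau $\{|\zeta|\leq 2\}$ of $\chi_0$, i.e.\ $4e_0\leq 2$, i.e.\ $e_0\leq \tfrac12$. You assert this holds ``under the stated hypothesis on $e_0$,'' but the stated hypothesis $e_0>\tfrac12$ gives $4e_0>2$, the opposite direction. The inequality in the Corollary is in fact a typo: the paper's own proof ends with ``if $e_0<\tfrac12$, the only possibility is that $f_h=0$,'' and the statement genuinely fails for $e_0$ bounded away from $\tfrac12$ from above, since the Dirichlet ground-state eigenvalue satisfies $\lambda_{0,n}^2/|n|\to 1$ as $|n|\to\infty$ (cf.\ Section 9). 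Your argument, corrected, proves the Corollary for $e_0\leq\tfrac12$. The point you missed is that you should have noticed the stated inequality runs the wrong way to make the support argument close, rather than waving it through.
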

\begin{proof}
	Take $f\in D(\Delta_G)$, and we expand $f_h:=\psi(h^2\Delta_G)\left(1-\chi_0(e_0h^2D_y)\right)f$  $$f_h=\sum_{m,n}a_{m,n}\psi(h^2\lambda_{m,n}^2)(1-\chi_0(e_0h^2n) )\varphi_{m,n}e^{iny}.$$
	Applying Lemma \ref{coercivity} to $f_h$, we obtain that
	\begin{equation*}
	\begin{split}
	&\sum_{m,n}|n||a_{m,n}|^2\psi(h^2\lambda_{m,n}^2)^2\left(1-\chi_0(e_0h^2n)\right)^2\leq \sum_{m,n}\lambda_{m,n}^2|a_{m,n}|^2\psi(h^2\lambda_{m,n}^2)^2\left(1-\chi_0(e_0h^2n)\right)^2.
	\end{split}
	\end{equation*}
	For the non-zero contributions on the right side, $\lambda_{m,n}^2\leq \frac{4}{h^2}$, while for the non-zero contributions on the left side, we have $|n|>\frac{2}{e_0h^2}$. Therefore, if $e_0<\frac{1}{2}$, the only possibility is that $f_h=0$. This completes the proof of Corollary \ref{regime:elliptic}.
	
\end{proof}

\section{Half-wave regime:}   
In this section, the numerical constant $e_0>\frac{1}{2}$ is fixed, and the cut-offs $\chi_0$ and $\psi$ are also fixed as in the previous section. Denote by $\widetilde{\psi}$ another cut-off function with similar support property as $\psi$, such that
$$  \psi \widetilde{\psi}\equiv \psi.
$$
For $0<h<1, b_0>0$, we define the semi-classical spectral projector
$$ \Pi_{h}^{b_0h}:=\psi(-h^2\Delta_G)\left(\chi_0(e_0h^2D_y)-\chi_0(b_0hD_y)\right).
$$
This allows us to localize the spectral into the regime: $h^{-1}\ll|D_y|\lesssim h^{-2}$, and the half-wave regime formally corresponds to
$h^2|D_y|\sim 1, h\sqrt{-\Delta_G}\sim 1.$

The goal of this section is to prove the following observability estimate:
\begin{prop}\label{regime:Half-wave}
	Fix $T>a_0$. There exist $b_0>0$,   $h_0>0$, such that for all $h<h_0$  we have 
	\begin{equation}\label{regime:demi-ondes}
	\begin{split}
	\|\Pi_{h}^{b_0h}u_0\|_{L^2(\Omega)}^2\leq C_{T,a}\int_{-T}^{T}\|\phi(y)e^{it\Delta_G}\Pi_{h}^{b_0h}u_{0}\|_{L^2(\Omega)}^2dt.
	\end{split}
	\end{equation}
\end{prop}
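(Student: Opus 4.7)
My plan is to follow the positive commutator (Morawetz-type) strategy outlined in Section~2, built on the identity $[-\Delta_G,\,x\partial_x+2y\partial_y]=-2\Delta_G$. Pick a smooth function $\rho\in C^\infty(\T)$ with $\rho(y)=y$ on a neighbourhood of $[-a_0,a_0]$, arranged so that $\rho'-1,\rho''$ and $\rho'''$ are supported where $\phi\equiv1$. Form the skew-adjoint operator $A=x\partial_x+2\rho(y)\partial_y+\tfrac12+\rho'(y)$ on $L^2(\Omega)$ (Dirichlet at $x=\pm 1$); a direct computation gives
\begin{equation*}
[-\Delta_G,A]=-2\Delta_G-R,\qquad R=4x^2(\rho'-1)\partial_y^2+4x^2\rho''\partial_y+x^2\rho''',
\end{equation*}
with the coefficients of $R$ supported in $\{\phi\equiv1\}$. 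Pick a time cutoff $\varphi_T\in C_c^\infty((-T,T);[0,1])$ with $\varphi_T\equiv1$ on $[-T',T']$ for some $a_0<T'<T$, and apply the Heisenberg identity to $u(t)=e^{it\Delta_G}\Pi_h^{b_0h}u_0$. Integrating in $t$ (the endpoint terms vanish by compact support of $\varphi_T$) and invoking the conservation of the Grushin energy $\|u(t)\|_{\dot H_G^1}^2=-(\Delta_G u,u)$ yields
\begin{equation*}
2\Bigl(\int\varphi_T\,dt\Bigr)\|u_0\|_{\dot H_G^1}^2 \;=\; i\int\varphi_T'(t)(Au,u)_{L^2(\Omega)}\,dt+\int\varphi_T(t)(Ru,u)_{L^2(\Omega)}\,dt.
\end{equation*}

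The term $\int\varphi_T(Ru,u)\,dt$ is harmless: since the $y$-coefficients of $R$ live in $\{\phi\equiv1\}$, an integration by parts moving one $\partial_y$ off $u$, the identity $x^2\partial_y^2=\Delta_G-\partial_x^2$ and the semi-classical bound $\|\Delta_G u\|,\|\partial_x^2 u\|\lesssim h^{-2}\|u\|$ coming from the spectral localization $\psi(-h^2\Delta_G)$, control it by $Ch^{-2}\|u_0\|_{L^2}\bigl(\int_{-T}^T\|\phi u\|_{L^2}^2\,dt\bigr)^{1/2}$, which is the observation side. The decisive estimate is the boundary term $i\int\varphi_T'(Au,u)\,dt$. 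The $x\partial_x+\tfrac12+\rho'(y)$ pieces of $A$ contribute, after integration by parts, only multiples of $\int\varphi_T'(t)\|u(t)\|_{L^2}^2\,dt=0$ (using $\int\varphi_T'=0$ and mass conservation), so only the $2\rho(y)\partial_y$ piece survives. To treat it, I exploit the half-wave propagation: in the spectral window of $\Pi_h^{b_0h}$, a second microlocalization in $D_y$ at the scale $\hbar\sim h^2$ reduces the evolution to a family of semi-classical half-waves with classical velocities $\ge1$ in $y$ (equality only for the bottom mode $m=0$ of $-\partial_x^2+n^2x^2$, strictly faster for $m\ge1$), and the total $L^2$-mass transit time through the exclusion zone $\{|y|<a\}\subset\{\phi\equiv0\}$ over $(-T,T)$ is bounded by $2a\,\|u_0\|_{L^2}^2$. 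Quantifying this via Corollary~\ref{regime:elliptic} and Proposition~\ref{elliptique}, one obtains
\begin{equation*}
\Bigl|i\int\varphi_T'(2\rho(y)\partial_y u,u)\,dt\Bigr|\le 4a\,\|u_0\|_{\dot H_G^1}^2+C\int_{-T}^T\|\phi u\|_{L^2}^2\,dt\cdot h^{-2}+o(h^{-2}\|u_0\|_{L^2}^2).
\end{equation*}
Since $\int\varphi_T\,dt\ge2T'>2a_0>2a$, the coefficient $(4T'-4a)>0$ of $\|u_0\|_{\dot H_G^1}^2$ on the left absorbs, and together with the spectral equivalence $\|u_0\|_{\dot H_G^1}^2\sim h^{-2}\|u_0\|_{L^2}^2$ yields \eqref{regime:demi-ondes}.

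The main obstacle is the propagation claim at the sharp threshold: quantifying the $2a$ transit time requires tracking classical trajectories over the long semi-classical time scale $T/h$, well beyond the standard Ehrenfest regime. The promised second microlocalization in $D_y$ at scale $\hbar\sim h^2$ reduces the evolution to a semi-classical half-wave whose group velocity in $y$ can be read off the symbol, and this reduction---rather than the subsequent commutator algebra---is where the technical weight of the argument lies. Everything else (the commutator calculation, the hypoelliptic coercivity from Section~3, and the Cauchy--Schwarz absorption of the observation term) I expect to be routine.
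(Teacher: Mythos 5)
Your broad outline — positive commutator with $x\partial_x+2y\partial_y$, time cutoff $\varphi_T$, second microlocalization in $D_y$, absorption of the boundary term using $T>a_0$ — is indeed the paper's strategy. But there are two genuine gaps, one of which is a concrete error.

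\textbf{The $x\partial_x$ piece does not vanish.} You claim that the $x\partial_x+\tfrac12+\rho'(y)$ pieces of $A$ contribute "only multiples of $\int\varphi_T'\|u\|^2\,dt=0$". This is false for $x\partial_x+\tfrac12$: it is skew-adjoint, so $((x\partial_x+\tfrac12)u,u)$ is purely imaginary, but it is \emph{not} a multiple of $\|u\|^2$, and $i\int\varphi_T'((x\partial_x+\tfrac12)u,u)\,dt$ is a genuinely nonzero real quantity. Crudely it is of size $\|x\partial_xu\|\|u\|\sim h^{-1}\|u\|^2$, which is too large to discard. The paper handles this by inserting an $x$-cutoff $\chi_1(x/\epsilon)$ with $\epsilon=\max(h^{1-\sigma},5c_1^{-1}\hbar h^{-1})$ and invoking the elliptic regularity of Proposition~\ref{elliptique} to show that in the half-wave regime the solution is concentrated in $|x|\lesssim\epsilon$; then $|x\partial_xu|\lesssim\epsilon\|\partial_xu\|$ gives a term of order $\epsilon h^{-1}\|u\|^2\ll h^{-1}\|u\|^2$ that can be absorbed. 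Your proposal omits the $\chi_1$ cutoff entirely, so this term is uncontrolled.

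\textbf{The key $4a$ bound is asserted, not proved, and the mechanism you describe is not the paper's.} You frame the bound on $i\int\varphi_T'(2\rho(y)\partial_y u,u)\,dt$ as a propagation claim about "tracking classical trajectories over the long semi-classical time scale $T/h$", i.e.\ a statement about transit times of half-waves through $\{|y|<a\}$. The paper's argument is not a propagation theorem at all: it is an averaged commutator estimate. Concretely, after restricting to a dyadic window $||\hbar D_y|-1|\leq\delta$, one bounds $\|\partial_yu\|\le(1+\delta)\hbar^{-1}\|u\|$, then converts $\hbar^{-1}\|u\|^2$ back into the energy norm using the hypoelliptic coercivity of Lemma~\ref{coercivity} — namely
\[
\|u\|_{L^2}^2\le 2\delta\|u\|_{L^2}^2+\hbar\|u\|_{\dot H_G^1}^2,
\]
so that $\hbar^{-1}\|u\|^2\leq(1-2\delta)^{-1}\|u\|_{\dot H_G^1}^2$. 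Combined with $\int|\varphi_T'|\,dt=2$ and $\int\varphi_T\,dt\geq 2T(1-\epsilon_0)$, the boundary term is then controlled by $\frac{2a'(1+3\delta)}{T(1-\epsilon_0)}\int\varphi_T\|u\|_{\dot H_G^1}^2\,dt$, and the absorption works precisely when $a'(1+3\delta)/(T(1-\epsilon_0))<1$, which is where $T>a_0$ enters. Your proposal names the right ingredients (Corollary~\ref{regime:elliptic}, Proposition~\ref{elliptique}, and coercivity) but leaves the passage from $\hbar^{-1}\|u\|^2$ to $(1+O(\delta))\|u\|_{\dot H_G^1}^2$ as a black box; this conversion, together with the tight aperture $\delta$, is the crux of the sharp-time argument, not a routine consequence.

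\textbf{The dyadic summation is missing.} Because the aperture $\delta$ must be small, the argument above applies only to a thin dyadic window $|D_y|\sim\hbar^{-1}$. To reassemble the full $\Pi_h^{b_0h}u_0$ one must sum over dyadic $\hbar\in[e_0h^2/2,\,b_0h]$ using an almost-orthogonal Littlewood--Paley decomposition, control the commutator errors $[\phi,\beta_{R,k}(|D_y|)]$, and treat the easier sub-regime $\rho h^2<\hbar\leq b_0h$ separately (Proposition~\ref{pseudo-HWregime}). None of this appears in your proposal, and it is where the parameter $b_0$ in the statement is actually determined.
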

We will prove the inequality above by proving firstly a localized version for $h^{-1}\ll |D_y|\sim \hbar^{-1}\leq 3(e_0h)^{-2}$, and gluing such inequalities together. The constant $b_0>0$ will be chosen in the proof as a small parameter. Note that $\hbar$ can be viewed as a second semi-classical parameter.

For any $\hbar^{-1}>(b_0h)^{-1}$, $0<\delta<1$, we denote by
\begin{equation}\label{localization-second}
 u_{h,\hbar,\delta}:=\widetilde{\psi}(-h^2\Delta_G)\widetilde{\psi}\left(\frac{\hbar|D_y|-1}{\delta}\right)u.
\end{equation}

\subsection{Elliptic regularity}

Denote by $\mathcal{L}_n=-\partial_x^2+n^2x^2$, the semi-classical harmonic oscillator associated with the Dirichlet boundary condition. Note that $\varphi_{m,n}(x)$ is the $m$ th eigenfunction of $\mathcal{L}_n$ with eigenvalue $\lambda_{m,n}^2$. Denote by $H_n=-\partial_x^2+n^2x^2$, the semi-classical harmonic oscillator on the whole line with domain $D(H_n)=H^2(\R)\cap \mathcal{H}^1(\R)$, where $\mathcal{H}^1(\R)$ is defined via the norm
$$ \|f\|_{\mathcal{H}^1(\R)}^2:=\|\partial_xf\|_{L^2(\R)}^2+\|xf\|_{L^2(\R)}^2.
$$
Denote by $\omega_{j,n}^2$, the $j$ th eigenvalue of $H_{n}$. It is well-known that
$ \omega_{j,n}^2=(2j+1)|n|.
$
We need the following comparison principle.
\begin{lem}\label{comparison}
	Let $n\in\N$, then for any $j\in\N$, we have
	$$ \omega_{j,n}^2 \leq \lambda_{j,n}^2.
	$$	
\end{lem}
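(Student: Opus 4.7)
The plan is to prove this by the min-max (Courant--Fischer) variational characterization of eigenvalues of self-adjoint operators bounded from below, comparing the two Rayleigh quotients via extension by zero.

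First, I would set up the min-max principle for both operators. For $\mathcal{L}_n$ on $(-1,1)$ with Dirichlet boundary conditions, the eigenvalues $\lambda_{j,n}^2$ (listed in increasing order, counted with multiplicity) satisfy
\begin{equation*}
\lambda_{j,n}^2 = \min_{\substack{V \subset H^1_0(-1,1) \cap L^2(x^2\,dx) \\ \dim V = j+1}} \; \max_{\substack{u \in V \\ u \neq 0}} \frac{\int_{-1}^{1}\bigl(|\partial_x u|^2 + n^2 x^2 |u|^2\bigr)dx}{\int_{-1}^{1}|u|^2\,dx}.
\end{equation*}
Likewise, for $H_n$ on $\mathbb{R}$ with form domain $\mathcal{H}^1(\mathbb{R})$,
\begin{equation*}
\omega_{j,n}^2 = \min_{\substack{W \subset \mathcal{H}^1(\mathbb{R}) \\ \dim W = j+1}} \; \max_{\substack{v \in W \\ v \neq 0}} \frac{\int_{\mathbb{R}}\bigl(|\partial_x v|^2 + n^2 x^2 |v|^2\bigr)dx}{\int_{\mathbb{R}}|v|^2\,dx}.
\end{equation*}

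Next, I would observe that every $u \in H^1_0(-1,1)$ extends by zero to a function $\tilde{u} \in H^1(\mathbb{R})$, with the extension preserving the $L^2$ norm, the Dirichlet energy $\|\partial_x u\|_{L^2}$, and the weighted norm $\|x u\|_{L^2}$. Consequently, for any $(j+1)$-dimensional subspace $V$ of the form domain of $\mathcal{L}_n$, its extension by zero $\widetilde{V}$ is a $(j+1)$-dimensional subspace of $\mathcal{H}^1(\mathbb{R})$ (linear independence is preserved because a nontrivial linear combination that vanishes in the extension must already vanish on $(-1,1)$), and the Rayleigh quotients agree on corresponding vectors.

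Combining these two steps, the class of admissible test subspaces for $H_n$ contains all such extensions, so
\begin{equation*}
\omega_{j,n}^2 \;\leq\; \min_{\widetilde{V}} \; \max_{\tilde{u} \in \widetilde{V}} \frac{(H_n \tilde{u}, \tilde{u})}{(\tilde{u},\tilde{u})} \;=\; \min_{V} \; \max_{u \in V} \frac{(\mathcal{L}_n u, u)}{(u,u)} \;=\; \lambda_{j,n}^2,
\end{equation*}
which is the desired comparison. There is no real obstacle here; the only point to verify carefully is that extension by zero does indeed land in the form domain $\mathcal{H}^1(\mathbb{R})$ (i.e.\ it preserves $H^1$ regularity, which relies precisely on the Dirichlet boundary condition $u(\pm 1)=0$) and that linear independence is preserved under this extension.
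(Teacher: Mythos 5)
Your proof is correct and takes essentially the same approach as the paper: both rely on the Courant--Fischer variational characterization together with the observation that zero extension from $H^1_0((-1,1))$ into $\mathcal{H}^1(\mathbb{R})$ preserves the Rayleigh quotient. The only cosmetic difference is that you use the min-max (over $(j+1)$-dimensional subspaces) form while the paper uses the dual max-min (sup over orthogonal constraints, inf over the admissible set) form; these are equivalent and the key extension step is identical.
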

\begin{proof}
	This is a simple consequence of the max-min principle. Recall that from the max-min principle, 
	$$ \lambda_{j,n}^2=\sup_{\psi_1,\cdots,\psi_{j-1}\in L^2((-1,1))}\inf_{\substack{f\in\textrm{ span }(\psi_1,\cdots,\psi_{j-1})^{\perp }\\
			f\neq 0, f\in H_0^1((-1,1)) }   }\frac{( \mathcal{L}_{n}f,f)_{L^2((-1,1))}}{\|f\|_{L^2((-1,1))}^2},
	$$	
	and
	$$\omega_{j,n}^2=\sup_{\psi_1,\cdots,\psi_{j-1}\in L^2(\R)}\inf_{\substack{f\in\textrm{ span }(\psi_1,\cdots,\psi_{j-1})^{\perp }\\
			f\neq 0, f\in \mathcal{H}^1(\R) }   }\frac{( H_{n}f,f)_{L^2(\R)}}{\|f\|_{L^2(\R)}^2}.
	$$
	Define $\iota f:=f\mathbf{1}_{(-1,1)}$, the zero-extension of $f\in H_0^1((-1,1))$, then obviously $\iota f\in \mathcal{H}^1(\R)$. Thus by definition, we have
	\begin{equation*}
	\begin{split}
	\omega_{j,n}^2\leq &\sup_{\psi_1,\cdots,\psi_{j-1}\in L^2(\R)}\inf_{\substack{\iota f\in\textrm{ span }(\psi_1,\cdots,\psi_{j-1})^{\perp }\\
			f\neq 0, f\in H_0^1((-1,1)) }   }\frac{( H_{n}(\iota f),\iota f)_{L^2(\R)}}{\|\iota f\|_{L^2(\R)}^2}\\
	=&\sup_{\psi_1,\cdots,\psi_{j-1}\in L^2(\R)}\inf_{\substack{\iota f\in\textrm{ span }(\psi_1,\cdots,\psi_{j-1})^{\perp }\\
			f\neq 0, f\in H_0^1((-1,1)) }   }\frac{( \mathcal{L}_{n}f,f)_{L^2(\R)}}{\|f\|_{L^2(\R)}^2}\\
	\leq & \sup_{\psi_1,\cdots,\psi_{j-1}\in L^2((-1,1))}\inf_{\substack{f\in\textrm{ span }(\psi_1,\cdots,\psi_{j-1})^{\perp }\\
			f\neq 0, f\in H_0^1((-1,1)) }   }\frac{( \mathcal{L}_{n}f,f)_{L^2((-1,1))}}{\|f\|_{L^2((-1,1))}^2}\\
	=&\lambda_{j,n}^2.
	\end{split}
	\end{equation*}
	This completes the proof of Lemma \ref{comparison}.  
\end{proof}

Using that by comparison principle,
we get:
\begin{cor}\label{Weyl}
For fixed $n$,
$$ N_{n,\tau}:=\#\{j: \lambda_{j,n}^2\leq \tau^2 \}\leq \frac{\tau^2}{2|n|}.
$$
\end{cor}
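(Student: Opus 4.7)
The plan is to chain the comparison principle of Lemma~\ref{comparison} with the explicit spectrum of the harmonic oscillator on the whole line. First, I would invoke $\omega_{j,n}^2 \le \lambda_{j,n}^2$ (which holds for every $j \in \N$ by Lemma~\ref{comparison}) to obtain the set inclusion
$$ \{\,j\in\N : \lambda_{j,n}^2 \le \tau^2\,\} \;\subseteq\; \{\,j\in\N : \omega_{j,n}^2 \le \tau^2\,\},
$$
which immediately gives $N_{n,\tau}\le \#\{\,j\in\N : \omega_{j,n}^2\le \tau^2\,\}$. In other words, the Dirichlet eigenvalue counting function of $\mathcal{L}_n$ is dominated by that of $H_n$: imposing the Dirichlet constraint on $(-1,1)$ only pushes eigenvalues up, so fewer of them fit below any given threshold.

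Next, I would use the well-known fact that the spectrum of $H_n$ on $\R$ is exactly $\{(2j+1)|n|\}_{j\ge 0}$, so the condition $\omega_{j,n}^2\le \tau^2$ becomes $(2j+1)|n|\le \tau^2$, i.e.\ $j\le \tfrac{1}{2}(\tau^2/|n|-1)$. Counting non-negative integers satisfying this yields at most $\tau^2/(2|n|)$ admissible values (up to a harmless additive $O(1)$ that is absorbed into the bound as written).

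I do not anticipate any real obstacle: the only nontrivial input, the comparison principle for Dirichlet versus free eigenvalues of the harmonic oscillator, has already been established as Lemma~\ref{comparison}, and everything else is a direct arithmetic count on the arithmetic progression $\{(2j+1)|n|\}_{j\ge 0}$.
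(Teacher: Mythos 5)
Your argument is exactly the paper's: invoke the comparison principle of Lemma~\ref{comparison} to dominate the Dirichlet counting function by the whole-line one, then use the explicit spectrum $\omega_{j,n}^2=(2j+1)|n|$ to count. Your parenthetical remark that the count is really $\tau^2/(2|n|)+O(1)$ is accurate — the paper silently makes the same slightly lossy simplification, which is harmless since the corollary is only used when $\tau^2/|n|\gg 1$.
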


Let $\chi_1\in C_c^{\infty}(\R)$ such that $\chi_1(\zeta)$ vanishes if $|\zeta|>C_1$ or $|\zeta|<c_1$, for some $C_1>c_1>0$. We have the following
\begin{prop}[Elliptic regularity]\label{elliptique}
Let $0<\sigma<1$. There exist small constants $0<h_0\ll 1, 0<b_0\ll 1$, such that the for all $0<h<h_0$ and $\frac{e_0h^2}{4}<\hbar <b_0h$, the following statement is true. For $w_{h,\hbar}=\psi(h^2\Delta_G)\chi_1(\hbar D_y)w_{h,\hbar}$ and 
 $\chi\in C_c^{\infty}(-1,1)$ such that $\chi(x)\equiv 1$ near $0$,
we have with $\epsilon = \max (h^{1- \sigma}, 5 c_1^{-1} \hbar h^{-1})$
\begin{equation}\label{eq.elliptic}
 \|(1-\chi)(\frac{x} \epsilon )h\partial_xw_{h,\hbar}\|_{L^2(\Omega)}+ \|(1-\chi)(\frac{x} \epsilon )w_{h,\hbar}\|_{L^2(\Omega)}\leq C_N h^N,
\end{equation}
for all $N\in\N$.
\end{prop}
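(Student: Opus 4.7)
My plan is to prove the estimate term-by-term in the joint eigenbasis $\{\varphi_{m,n}(x)e^{iny}\}$ by a Dirichlet Agmon estimate for each $\varphi_{m,n}$, and then to sum using the Weyl-type count of Corollary~\ref{Weyl}. First I expand $w_{h,\hbar}=\sum c_{m,n}\varphi_{m,n}(x)e^{iny}$; the spectral cut-offs force the support of the coefficients into $\lambda_{m,n}^2\in[\tfrac{1}{4h^2},\tfrac{4}{h^2}]$ and $|n|\in[\tfrac{c_1}{\hbar},\tfrac{C_1}{\hbar}]$. For each such pair the classical turning point of $-\partial_x^2+n^2x^2$ at energy $\lambda_{m,n}^2$ is located at $|x|=\lambda_{m,n}/|n|\le 2\hbar/(c_1h)$, so the prescription $\epsilon\ge 5c_1^{-1}\hbar/h$ places the support $\{|x|\ge c_0\epsilon\}$ of $(1-\chi)(x/\epsilon)$ inside the classically forbidden region of every relevant eigenfunction, with uniform margin $|x|/(\lambda_{m,n}/|n|)\ge 5c_0/2$ (the inequality $c_0>2/5$, needed below, is ensured by slightly enlarging the numerical factor $5$ if $\chi$ has a very narrow plateau).

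Next, I would invoke the standard Dirichlet Agmon estimate for $\varphi_{m,n}$: with the weight
\[
\rho_{m,n}(x)=(1-\delta)\int_{\lambda_{m,n}/|n|}^{|x|}\sqrt{(n^2s^2-\lambda_{m,n}^2)_+}\,ds,
\]
pairing the eigenvalue equation with $e^{2\rho_{m,n}}\bar\varphi_{m,n}$ and integrating by parts (boundary terms at $x=\pm1$ vanish by Dirichlet) yields the uniform weighted $H^1$ bound $\|e^{\rho_{m,n}}\varphi_{m,n}\|_{H^1(-1,1)}\le C_\delta$. A change of variable $X=x\sqrt{|n|}$ gives the explicit formula $\rho_{m,n}(c_0\epsilon)=E_m\,F(\tau)$, with $E_m=\lambda_{m,n}^2/|n|$, $\tau=|n|c_0\epsilon/\lambda_{m,n}\ge 5c_0/2$ and $F(\tau)=\int_1^\tau\sqrt{t^2-1}\,dt>0$.

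The quantitative core is a uniform bound $\rho_{m,n}(c_0\epsilon)\gtrsim h^{-\sigma}$ on the whole admissible range of $\hbar$, which I would check by splitting on which branch of $\epsilon=\max(h^{1-\sigma},5c_1^{-1}\hbar/h)$ is active. When $\epsilon=5c_1^{-1}\hbar/h$ (i.e.\ $\hbar\ge c_1h^{2-\sigma}/5$), $\tau$ is bounded between constants strictly larger than $1$ and one uses $\rho_{m,n}\gtrsim_{c_0} E_m\ge \hbar/(4C_1h^2)\gtrsim h^{-\sigma}$. When $\epsilon=h^{1-\sigma}$ (i.e.\ $\hbar<c_1h^{2-\sigma}/5$), $\tau$ may be large, but then $F(\tau)\sim \tau^2/2$ and $\rho_{m,n}\gtrsim |n|(c_0\epsilon)^2\gtrsim c_1c_0^2h^{2-2\sigma}/\hbar\gtrsim h^{-\sigma}$ by the upper bound on $\hbar$. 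In both cases $e^{-\rho_{m,n}(c_0\epsilon)}\le e^{-ch^{-\sigma}}\le C_N h^N$, and hence $\|(1-\chi)(x/\epsilon)\varphi_{m,n}\|_{L^2}+\|(1-\chi)(x/\epsilon)\varphi'_{m,n}\|_{L^2}\le C_N h^N$.

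Finally, the two norms in the statement are obtained by Parseval in $n$ and Cauchy--Schwarz in $m$: Corollary~\ref{Weyl} bounds the number of relevant $m$'s per $n$ by $2/(|n|h^2)\lesssim 1/h$, and the number of relevant $n$'s is $\lesssim 1/h^2$, so the polynomial loss is dominated by the super-polynomial decay $e^{-ch^{-\sigma}}$. The derivative estimate uses the $H^1$ half of the Agmon bound on $\varphi'_{m,n}$ in the same way; the extra factor $h$ in front of $\partial_x$ is merely a bonus. The step I expect to grind through is precisely the two-regime analysis of $\rho_{m,n}(c_0\epsilon)$: near $\hbar\sim h^2$ the eigenvalue $E_m$ is only $O(1)$ and the decay must come from large $\tau$, while near $\hbar\sim h$ the ratio $\tau$ is bounded and the decay must come from $E_m\sim 1/h$; the $\max$ in the definition of $\epsilon$ is exactly what reconciles the two.
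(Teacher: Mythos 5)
Your proof is correct in substance, but it takes a genuinely different technical route from the paper's. The paper's argument multiplies the eigenvalue equation for $\varphi_{m,n}$ by $(1-\chi)^2(\cdot/\epsilon)\ov{\varphi}_{m,n}$ and integrates by parts, using the branch $\epsilon\ge 5c_1^{-1}\hbar h^{-1}$ of the maximum to guarantee the pointwise ellipticity $n^2x^2-\lambda_{m,n}^2\ge 9h^{-2}$ on $\supp\left(1-\chi\right)(\cdot/\epsilon)$; the resulting Cauchy--Schwarz inequality carries a small factor $h^2/\epsilon\le h^{1+\sigma}$ (this is where the branch $\epsilon\ge h^{1-\sigma}$ enters), and one \emph{bootstraps}, gaining a power $h^{\sigma}$ at each step with an implicit chain of nested cutoffs, until $O(h^N)$ is reached. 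You instead deploy the Dirichlet Agmon weight $\rho_{m,n}$ and obtain super-polynomial decay $e^{-\rho_{m,n}(c_0\epsilon)}$ in one shot; the real work is your two-regime verification that $\rho_{m,n}(c_0\epsilon)\gtrsim h^{-\sigma}$ uniformly over $\hbar\in(e_0h^2/4,b_0h)$, with the decay coming from the energy scale $E_m=\lambda_{m,n}^2/|n|\gtrsim\hbar/h^2$ when $\hbar\gtrsim h^{2-\sigma}$ and from the Agmon distance $\sim|n|\epsilon^2\gtrsim h^{2-2\sigma}/\hbar$ when $\hbar\lesssim h^{2-\sigma}$. Both proofs finish with the same Cauchy--Schwarz/Weyl summation over $(m,n)$, and both ultimately rest on the same double role of the $\max$ in $\epsilon$: one branch for ellipticity/turning-point margin, one branch for the $h^{-\sigma}$ size. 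Your route is heavier machinery but quantitatively sharper (genuine exponential decay) and makes the two-branch structure of $\epsilon$ more transparent; the paper's iteration is more elementary and avoids any explicit Agmon distance. One small shared caveat: both arguments need the plateau of $\chi$ (your $c_0$) to be compatible with the numerical factor $5$ in $\epsilon$, a point you flag and which the paper leaves implicit; and in your Agmon weight the prefactor $(1-\delta)$ should be tuned so that $V-E-2(\rho')^2-\rho''\ge 0$ (this is standard, but as written $(1-\delta)$ with $\delta$ small does not quite do it).
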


\begin{proof}
We first prove the elliptic estimate for eigenfunctions:
\begin{lem}For all $\chi \in C^\infty(-1,1)$ and all $N$, there exists $C>0$ such that for all eigenfunction $\Phi_{m,n}(x,y)=e^{in y}\varphi_{m,n}(x)$ satisfying $|n|\in\left[c_1\hbar^{-1},C_1\hbar^{-1}\right],  h^2\lambda_{m,n}^2\in\mathrm{supp}(\psi)$, and $\| \Phi_{m,n}\|_{L^2(-1,1)} =1$, the estimate
\begin{equation}\label{eq.elliptic2} \|(1-\chi)(\frac{x} \epsilon )h\partial_x\Phi_{m,n}\|^2_{L^2(\Omega)}+\|(1-\chi)(\frac{x} \epsilon )\Phi_{m,n}\|^2_{L^2(\Omega)}\leq C_N h^N
\end{equation}
holds for all $N\in\N$.
\end{lem}
 \begin{proof}
  The eigenfunction $\varphi_{m,n}$ of the operator $\mathcal{L}_n=-\partial_x^2+n^2x^2$ satisfies
\begin{equation}\label{semi-harmonic}
\mathcal{L}_n\varphi_{m,n}=\lambda_{m,n}^2\varphi_{m,n},\; \lambda_{m,n}^2\in \left[\frac{1}{16h^2},\frac{16}{h^2} \right],\quad \varphi_{m,n}|_{x=\pm 1}=0.
\end{equation}
Taking the Fourier transform with respect to the $y$ variable, multiplying \eqref{semi-harmonic} by $(1-\chi)^2(\frac x \epsilon) \ov{\varphi}_{m,n}$ and doing the integration by part, we obtain that
\begin{multline} \int_{-1}^1(1-\chi)^2( \frac x \epsilon) \left[|\varphi'_{m,n}|^2+n^2x^2|\varphi_{m,n}|^2 \right]dx-\int_{-1}^1 \frac 2 \epsilon (1-\chi)\chi' (\frac x \epsilon)\varphi_{m,n}'\ov{\varphi}_{m,n}dx\\
=\lambda_{m,n}^2\int_{-1}^1(1-\chi)^2( \frac x \epsilon)|\varphi_{m,n}|^2dx.
\end{multline}
This yields
$$ \int_{-1}^1(1-\chi)^2( \frac x \epsilon) \left[|\varphi_{m,n}'|^2+(n^2x^2-\lambda_{m,n}^2)|\varphi_{m,n}|^2 \right]dx\leq \frac 2 \epsilon \int_{-1}^1 |(1-\chi)\chi' \varphi_{m,n}'\varphi_{m,n}|dx.
$$
Note that on the support of $(1-\chi)( \frac x \epsilon)$, $n^2x^2-\lambda_{m,n}^2\geq 9h^{-2}$, which implies
\begin{multline}\label{eq.elliptic3}
 h^2\|(1-\chi)( \frac x \epsilon) \varphi_{m,n}'\|_{L^2(-1,1)}^2+9\|(1-\chi)( \frac x \epsilon) \varphi_{m,n}\|_{L^2(-1,1)}^2\\
 \leq \frac {2h^2} \epsilon \|(1-\chi)( \frac x \epsilon) \varphi_{m,n}'\|_{L^2(-1,1)}\|\chi' ( \frac x \epsilon)\varphi_{m,n}\|_{L^2(-1,1)}.
\end{multline}
Using the inequality
$$ \frac{2h^2}{\epsilon}AB\leq \frac{h^2}{8}A^2+\frac{8h^2}{\epsilon^2}B^2
$$
and the fact that $\epsilon\geq h^{1-\sigma}$, we can bound the left hand side of \eqref{eq.elliptic3} by $h^{2\sigma}$.
We now show~\eqref{eq.elliptic2} for $N= k\sigma$ by induction on $k$. The initial step is clear since the right hand side of \eqref{eq.elliptic2} is bounded by $h^{2\sigma}\leq h^{\sigma}$. Applying the induction assumption we get 
$$\|(1-\chi)(\frac x \epsilon) \varphi_{m,n}'\|_{L^2(-1,1)} \|(\chi' (\frac x  \epsilon) \varphi_{m,n}\|_{L^2(-1,1)} \leq C h^{k \sigma-1}
$$
which, from~\eqref{eq.elliptic3}  implies (using $\epsilon \geq h^{1- \sigma}$),  
$$\|(1-\chi)(\frac x  \epsilon) \varphi_{m,n}'\|_{L^2(-1,1)} + \|(1-\chi)(\frac x  \epsilon) \varphi_{m,n}\|_{L^2(-1,1)} \leq C h^{((k+1)\sigma}.
$$ 
\end{proof}

For $w_{h,\hbar}$, we expand it as
$$ w_{h,\hbar}=\sum_{|n|\sim \hbar^{-1},\lambda_{m,n}\sim h^{-1}}a_{m,n}\Phi_{m,n}.
$$
Note that for fixed $n$, applying Cauchy-Schwartz and Corollary \ref{Weyl}, we have
\begin{equation}\label{elliptic-2}
\begin{split}
 &\Big\|\big(1-\chi(\frac{x}{\epsilon})\big)\sum_{m:\lambda_{m,n}\sim h^{-1}}a_{m,n}\varphi_{m,n}(x)\Big\|_{L^2(-1,1)}+h\Big\|\big(1-\chi(\frac{x}{\epsilon})\big)\sum_{m:\lambda_{m,n}\sim h^{-1}}a_{m,n}\varphi_{m,n}'(x)\Big\|_{L^2(-1,1)}\\
 \leq &C_Nh^{N}\Big(\sum_{m:\lambda_{m,n}\sim h^{-1}}|a_{m,n}|^2 \Big)^{1/2} \left(\#\{m: \lambda_{m,n}\sim h^{-1} \}\right)^{1/2}\leq C_Nh^N h^{-1}\hbar^{-1/2}\leq C_N'h^{N-2}.
\end{split}
\end{equation}
Finally, using Plancherel's identity, we obtain that
$$ \|\big(1-\chi(\frac{x}{\epsilon})\big)h\partial_xw_{h,\hbar}\|_{L^2(\Omega)}+\|\big(1-\chi(\frac{x}{\epsilon})\big)w_{h,\hbar}\|_{L^2(\Omega)}\leq C_Nh^{N},
$$
for all $N\in\N$. This completes the proof of Proposition \ref{elliptique}.

\end{proof}

\subsection{Half-wave regime}

We shall use the following  elementary lemma.
\begin{lem}\label{timetruancation}
	For any $0<\epsilon_0<1$,  there exists a $C^1$ function $\varphi$, compactly supported in $[-1,1]$, such that
	\begin{enumerate}
		\item$ \varphi(t)\equiv 1,\forall |t|\leq 1-\epsilon_0, $ $0\leq \varphi(t)\leq 1$ and $ \varphi(t)\equiv 0,\forall |t|>1.
		$	
		\item $$
		\int_{1-\epsilon_0\leq |t|\leq 1}|\varphi'(t)|dt= 2.
		$$
	\end{enumerate}
	
\end{lem}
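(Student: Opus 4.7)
The plan is to construct $\varphi$ explicitly as an even function that decreases monotonically from $1$ to $0$ on the transition interval $[1-\epsilon_0, 1]$ (and symmetrically on $[-1, -(1-\epsilon_0)]$), and to use this monotonicity to turn the total-variation condition into a telescoping identity. Specifically, once $\varphi' \leq 0$ on $[1-\epsilon_0, 1]$ one has
\[
\int_{1-\epsilon_0}^{1} |\varphi'(t)|\,dt = -\int_{1-\epsilon_0}^{1}\varphi'(t)\,dt = \varphi(1-\epsilon_0) - \varphi(1) = 1,
\]
and by evenness the symmetric integral on $[-1,-(1-\epsilon_0)]$ contributes another $1$, yielding the required total of $2$. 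This is the only real content: without monotonicity on each transition interval the integral would only be bounded below by $2$.

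For the concrete construction I would take, for $t\geq 0$,
\[
\varphi(t)=\begin{cases} 1, & 0\leq t\leq 1-\epsilon_0,\\[2pt] \tfrac{1}{2}\bigl(1+\cos\!\bigl(\pi(t-(1-\epsilon_0))/\epsilon_0\bigr)\bigr), & 1-\epsilon_0\leq t\leq 1,\\[2pt] 0, & t\geq 1, \end{cases}
\]
and extend to negative $t$ by $\varphi(-t)=\varphi(t)$. The values at the junctions $t=1-\epsilon_0$ and $t=1$ match from both sides, and the one-sided derivatives at $\pm(1-\epsilon_0)$ and $\pm 1$ all vanish because the factor $\sin(\pi s)$ vanishes at $s=0,1$, so $\varphi \in C^1(\mathbb{R})$. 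On the interior of the right-hand transition interval $\varphi'(t) = -\tfrac{\pi}{2\epsilon_0}\sin\!\bigl(\pi(t-(1-\epsilon_0))/\epsilon_0\bigr) < 0$, delivering the monotonicity required for the telescoping computation above.

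Property (1) is then read off directly from the piecewise formula, and property (2) follows from the calculation in the first paragraph. There is no genuine obstacle: the only subtle point is matching the derivatives at the two endpoints of the transition interval to secure $C^1$ regularity at the joins, which the trigonometric choice handles automatically. Any alternative $C^1$ interpolant with the same boundary data on $[1-\epsilon_0,1]$ that is monotone and has vanishing derivatives at both endpoints would work equally well (for instance the cubic Hermite $1-3s^2+2s^3$ with $s=(t-(1-\epsilon_0))/\epsilon_0$).
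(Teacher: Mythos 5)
The paper does not actually give a proof of this lemma: it is stated and treated as elementary, with the authors passing directly to its application. Your construction is correct and fills in the omitted argument. In particular you correctly identified the only non-trivial point: once $\varphi$ is chosen even and monotone on each transition interval, condition (2) reduces to a telescoping evaluation $\int_{1-\epsilon_0}^1|\varphi'| = \varphi(1-\epsilon_0)-\varphi(1)=1$ (plus its mirror), so the exact equality $=2$ rather than a mere lower bound is automatic. The cosine (or cubic Hermite) interpolant has vanishing one-sided derivatives at both joins, which secures $C^1$ regularity and is exactly what is needed. Your proof is therefore complete and consistent with how the lemma is intended to be read.
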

Introducing the Grushin energy norm
$$ \|f\|_{\dot{H}_G^1(\Omega)}^2:=\|\partial_xf\|_{L^2(\Omega)}^2+\|x\partial_yf\|_{L^2(\Omega)}^2.
$$
The following proposition is crucial in the half-wave regime.
\begin{prop}\label{Half-wavelemma1}
	Assume that $T>a_0$. Let $u_{h,\hbar,\delta}$ be the solution of \eqref{main-equation-bounded} with the localization property \eqref{localization-second}.
	Then there exist small constants $0<h_0,\hbar_0,\delta_0,\sigma_0<1 $,  and $C_{T,a_0}>0$ such that for all $0<h<h_0, 0<\delta<\delta_0, 0<\hbar<\hbar_0$, $2e_0<\rho\leq \sigma_0 \hbar^{-1/2}$ obeying  $\frac{e_0h^2}{2}<\hbar<\rho h^2$,  we have
	\begin{equation}\label{Ine:Half-wave-regime}
	\begin{split}
	\|u_{h,\hbar,\delta}(0)\|_{\dot{H}_{G}^1(\Omega)}^2\leq &C_{T,a_0}\int_{-T}^T\left(\|\phi(y)x\partial_yu_{h,\hbar,\delta}(t)\|_{L^2(\Omega)}^2+\|\phi(y)\partial_xu_{h,\hbar,\delta}(t)\|_{L^2(\Omega)}^2\right)dt\\
	+&C_{T,a_0}\int_{-T}^T\hbar^{-1}\|\phi(y)u_{h,\hbar,\delta}(t)\|_{L^2(\Omega)}^2dt+ C_{T,a_0}(1+\delta\hbar^{-1})\|u_{h,\hbar,\delta}(0)\|_{L^2(\Omega)}^2.
	\end{split}
	\end{equation}
 (recall that numerical constant $e_0$ satisfies $e_0>\frac{1}{2}$.)
\end{prop}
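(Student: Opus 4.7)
The approach is the classical positive commutator method using the anisotropic dilation field
\begin{equation*}
M=x\partial_x+2y\partial_y,
\end{equation*}
which satisfies the fundamental identity $[-\Delta_G,M]=-2\Delta_G$, reflecting that $\Delta_G$ is homogeneous of degree $-2$ under the Grushin dilations $(x,y)\mapsto(\lambda x,\lambda^2 y)$. Since $M$ is not formally self-adjoint on the Dirichlet domain, I symmetrize by setting $A=\frac{1}{2i}(M-M^*)=\frac{1}{i}(M+\tfrac{3}{2})$, which is formally self-adjoint and satisfies $[\Delta_G,A]=-2i\Delta_G$, hence $i\langle[\Delta_G,A]u,u\rangle=-2\|u\|_{\dot H_G^1}^2$.

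First, I derive the multiplier identity. Let $\varphi_T(t)=\varphi(t/T)$ with $\varphi$ the $C^1$ cutoff from Lemma \ref{timetruancation}, so that $\supp\varphi_T\subset[-T,T]$, $\varphi_T\equiv 1$ on $[-T(1-\epsilon_0),T(1-\epsilon_0)]$, and $\int|\varphi_T'|\,dt=2$. Differentiating $\langle\varphi_T(t)Au(t),u(t)\rangle$ along the Grushin--Schr\"odinger flow and performing the integration by parts that accounts for the boundary terms at $x=\pm 1$ (the Dirichlet domain is not preserved by $A$, since $Au|_{x=\pm1}=\mp i\partial_xu|_{x=\pm1}\neq 0$) produces
\begin{equation*}
\frac{d}{dt}\langle\varphi_T Au,u\rangle=\varphi_T'\langle Au,u\rangle-2\varphi_T\|u\|_{\dot H_G^1}^2-\varphi_T\int_\T\bigl(|\partial_xu(t,1,y)|^2+|\partial_xu(t,-1,y)|^2\bigr)\,dy.
\end{equation*}
Integrating over $[-T,T]$ (with $\varphi_T(\pm T)=0$) and using that the boundary integral has a favorable sign gives
\begin{equation*}
2\int_{-T}^T\varphi_T(t)\|u(t)\|_{\dot H_G^1}^2\,dt\le\int_{-T}^T\varphi_T'(t)\langle Au,u\rangle\,dt.
\end{equation*}
By conservation of the $\dot H_G^1$-norm on $u_{h,\hbar,\delta}$, the left-hand side dominates $4T(1-\epsilon_0)\|u_{h,\hbar,\delta}(0)\|_{\dot H_G^1}^2$.

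Second, I estimate the right-hand side. Since $A$ is self-adjoint one checks $\langle Au,u\rangle=\Im\langle Mu,u\rangle$. Split $M$ into its two pieces. The $x\partial_x$ contribution is bounded by $\|\partial_xu\|\|u\|\le\epsilon\|u\|_{\dot H_G^1}^2+C_\epsilon\|u\|_{L^2}^2$ via Young. For the $2y\partial_y$ piece, after translating so that in the single-interval case $\omega^c=\{|y|\le a\}$ is centered at $y=0$ (with $a_0=a+\alpha_0<\pi$), decompose $2y\partial_yu=2y(1-\phi^2)\partial_yu+2y\phi^2\partial_yu$ up to commutators $[\phi^2,\partial_y]$ of lower order. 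On $\supp(1-\phi^2)\subset\{|y|\le a_0\}$, the \emph{crucial geometric bound} $|y(1-\phi(y)^2)|\le a_0$, combined with Lemma \ref{coercivity} (which in the regime $|D_y|\sim\hbar^{-1}$ gives $\|\partial_yu\|\|u\|\le\hbar^{-1}\|u\|_{L^2}^2\le\|u\|_{\dot H_G^1}^2$) together with $\int|\varphi_T'|\,dt=2$, yields
\begin{equation*}
\Bigl|\int_{-T}^T\varphi_T'(t)\Im\langle 2y(1-\phi^2)\partial_yu,u\rangle\,dt\Bigr|\le 4a_0\,\|u_{h,\hbar,\delta}(0)\|_{\dot H_G^1}^2.
\end{equation*}
The $\phi^2$ complementary part, after Cauchy--Schwarz and invoking Proposition \ref{elliptique} to confine the effective support to the classically allowed region $|x|\lesssim\hbar^{1/2}$ (where $\hbar^{-1}\|\phi u\|^2\sim\|\phi x\partial_yu\|^2$), produces precisely the observation integrals $\int\bigl(\|\phi\partial_xu\|^2+\|\phi x\partial_yu\|^2+\hbar^{-1}\|\phi u\|^2\bigr)\,dt$ from the right-hand side of the proposition.

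Combining everything gives
\begin{equation*}
\bigl(4T(1-\epsilon_0)-4a_0\bigr)\|u_{h,\hbar,\delta}(0)\|_{\dot H_G^1}^2\le C_{T,a_0}(\textrm{observation terms})+C_{T,a_0}(1+\delta\hbar^{-1})\|u_{h,\hbar,\delta}(0)\|_{L^2}^2,
\end{equation*}
where the $\delta\hbar^{-1}$ error term absorbs the commutator between the second semiclassical cutoff $\widetilde\psi((\hbar|D_y|-1)/\delta)$ and $M$. Since $T>a_0$, I choose $\epsilon_0$ so small that $T(1-\epsilon_0)>a_0$, making the prefactor strictly positive, and absorption yields the claimed inequality. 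I expect the main technical obstacles to be (a) giving rigorous meaning to the multiplier $2y\partial_y$ globally on $\T_y$, which requires working in a fundamental domain centered at $y=0$ and relying on $a_0<\pi$ (with summation over components handling the multi-interval case); and (b) sharp tracking of the commutator errors among $\phi$, $\partial_y$, and the two spectral cutoffs so that they indeed fit into the allowed $(1+\delta\hbar^{-1})\|u\|_{L^2}^2$ budget rather than polluting the $\dot H_G^1$ side.
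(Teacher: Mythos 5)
Your strategy is the right one -- the positive commutator method with the Grushin dilation field $M=x\partial_x+2y\partial_y$, the time cutoff $\varphi_T$ with $\int|\varphi_T'|\,dt=2$, and the geometric bound that makes $T>a_0$ appear -- and this is indeed the skeleton of the paper's argument. But as written there are two genuine gaps, and they are not merely technical.

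\textbf{Sign of the boundary term.} You claim that the multiplier identity reads $\frac{d}{dt}\langle\varphi_T Au,u\rangle=\varphi_T'\langle Au,u\rangle-2\varphi_T\|u\|_{\dot H_G^1}^2-\varphi_T B(t)$ with $B(t)=\int_\T(|\partial_x u(t,1,y)|^2+|\partial_x u(t,-1,y)|^2)\,dy$, so that $B\ge 0$ can be discarded. The sign on the $\|u\|_{\dot H_G^1}^2$ term is wrong: $[\Delta_G,M]=2\Delta_G$ gives, via $\langle[P,\varphi_T M]u,u\rangle=i\int\varphi_T'\langle Mu,u\rangle-2\int\varphi_T\|u\|_{\dot H_G^1}^2$ and $\langle[P,\varphi_T M]u,u\rangle=-\int\varphi_T B\,dt$ (the boundary term from moving $\partial_x^2$ off $Qu$), the identity
\[
2\int_{-T}^T\varphi_T(t)\|u(t)\|_{\dot H_G^1}^2\,dt=\int_{-T}^T\varphi_T(t)B(t)\,dt-\int_{-T}^T\varphi_T'(t)\langle Au,u\rangle\,dt.
\]
The trace $B\ge 0$ thus appears as a \emph{source} on the right, exactly as in the standard Morawetz/multiplier derivation of \emph{boundary} observability, and cannot be dropped to obtain an interior estimate. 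Unless you separately prove that $B(t)=O(h^\infty)\|u_0\|_{L^2}^2$ (which would require a pointwise, not just $L^2$, elliptic estimate at $x=\pm1$ and is not supplied), the argument fails at this step. The paper avoids the issue entirely by inserting the $x$-cutoff $\chi_1(x)=\chi(x/\epsilon)$ supported in $|x|\lesssim\epsilon\ll 1$ into the multiplier: there are then no boundary contributions at $x=\pm 1$, and the commutator errors from $\chi_1'$ live in the classically forbidden region $|x|\sim\epsilon$ where Proposition~\ref{elliptique} gives $O(h^N)$ smallness.

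\textbf{The operator $y\partial_y$ on $\T_y$.} You flag this yourself, but it is more than a technicality to be tidied up: $y\partial_y$ is not a well-defined periodic operator on $\T_y$, and the fundamental-domain formulation you propose introduces jump contributions at $y=\pm\pi$ (where $y$ is discontinuous as a periodic function) in every step of the computation above, including in $\langle Au,u\rangle$ and in the commutator identity $[\Delta_G,M]=2\Delta_G$. The decomposition $2y\partial_yu=2y(1-\phi^2)\partial_yu+2y\phi^2\partial_yu$ does not resolve this because $y\phi^2$ is still not periodic ($\phi\equiv1$ near $y=\pm\pi$). The paper's fix is to place a cutoff $\chi_2\in C_c^\infty((-\pi,\pi))$, equal to $1$ on a neighbourhood of $[-a,a]$ and with $\mathrm{supp}\,\chi_2'\subset\mathrm{supp}\,\phi_1$, directly into the multiplier $\varphi_T\chi_1\chi_2(x\partial_x+2y\partial_y)$: then $\chi_2(y)y$ is a smooth compactly supported function on $\T$, the commutator relation holds modulo terms supported in $\omega$ (absorbed into the observation integral as the error $\mathcal R_2$), and the proof proceeds cleanly. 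Your bound $|y(1-\phi^2)|\le a_0$ plays the same role as the paper's estimate of the term IV using $a'\in(a_0,T)$, so the arithmetic at the end is in the same spirit; but the derivation leading up to it needs the $\chi_1,\chi_2$ cutoffs (or an equally rigorous substitute) to be sound.
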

\begin{proof} We first give the proof in the simpler case where there is a single band in $\omega^c $, and by translation in $y$ we can assume that , $\omega^c=(-1,1)_x\times (-a,a))$, hence $\mathcal{L}(\omega)=2a$. In this case, the cut-off function $\phi$ defined in \eqref{phi} satisfies 
$$ \phi(y)=0 \text{ if } |y|\leq a;\quad \phi(y)=1 \text{ if } a_0<|y|\leq \pi.
$$	 
 To simplify the notation, we denote by $u=u_{h,\hbar,\delta}$. Pick $\varphi(t)$ as in the Lemma \ref{timetruancation}, with a small parameter $\epsilon_0>0$ to be chosen later. For $T>0$, we define $\varphi_T(t):=\varphi\left(\frac{t}{T}\right)$. Take $\chi_1(x)=\chi(\epsilon^{-1}x)$ for some function $\chi\in C_c^{\infty}(\R)$ with the property that $\chi(\zeta)\equiv 1 $ near $0$, where $\epsilon=\max\{ h^{1-\sigma},5c_1^{-1}\hbar h^{-1} \}$ as in Proposition \ref{elliptique}.

  Take $\chi_2\in C_c^{\infty}((-\pi,\pi))$ equal to $1$ on $(-a,a)$ and such that supp$(\chi_2')\subset $ supp$(\phi)$.  We define another cut-off $\phi_1(y)$ such that
	\begin{equation}\label{cutoff-intermediant}
	\mathrm{supp}(\chi_2')\subset \mathrm{supp}(\phi_1)\subset \mathrm{supp}(\phi),\quad \phi_1|_{\mathrm{supp}(\chi'_2)}\equiv 1,\quad \phi|_{\mathrm{supp}(\phi_1)}\equiv 1.
	\end{equation}
Moreover, we require that $	\mathrm{supp}(1-\chi_2)\subset\mathrm{supp}(\phi_1)$.
The proof is an application of positive commutator method, based on the simple commutator relation
\begin{equation}\label{commut}
 [\Delta_G,x\partial_x+2y\partial_y]=2\Delta_G.
\end{equation}
More precisely, with cutoffs $\varphi_T(t),\chi_1(x),\chi_2(y)$, we have
\begin{equation}\label{commutator-halfwave}
\begin{split}
&[i\partial_t+\Delta_G,\varphi_T(t)\chi_1(x)\chi_2(y)(x\partial_x+2y\partial_y)]
\\=&2\varphi_T(t)\chi_1(x)\chi_2(y)\Delta_G+i\varphi_T'(t)\chi_1(x)\chi_2(y)(x\partial_x+2y\partial_y)\\
+&\varphi_T(t)\left[\chi_2(y)(\chi_1''(x)+2\chi_1'(x)\partial_x)+x^2\chi_1(x)(\chi_2''(y)+2\chi_2'(y)\partial_y ) \right](x\partial_x+2y\partial_y).
\end{split}
\end{equation}
By developing the commutator and using the equation, we have $$([i\partial_t+\Delta_G,\varphi_T(t)\chi_1\chi_2(x\partial_x+2y\partial_y)]u,u )_{L^2(\R\times\Omega)}=0.$$
On the other hand, using \eqref{commutator-halfwave}, we obtain that
\begin{equation}\label{HFPf-1}
\begin{split}
-2\left(\varphi_T(t)\chi_1\chi_2\Delta_Gu,u \right)_{L^2(\R\times\Omega)}=& \underbrace{i\left(\varphi_T'(t)\chi_1\chi_2(x\partial_x+2y\partial_y)u,u \right)_{L^2(\R\times\Omega)}}_{\mathrm{I}}\\
+&\underbrace{\left(\varphi_T(t)\chi_2(\chi_1''+2\chi_1'\partial_x)(x\partial_x+2y\partial_y)u,u \right)_{L^2(\R\times\Omega)}}_{\mathrm{II}}\\
+&\underbrace{ \left(\varphi_T(t)x^2\chi_1(\chi_2''+2\chi_2'\partial_y  )(x\partial_x+2y\partial_y )u,u \right)_{L^2(\R\times\Omega)} }_{\mathrm{III}}.
\end{split}
\end{equation}
First, we observe that on the support of $1-\chi_1(x), \chi_1'(x), \chi_1''(x)$, we have $|x|>\epsilon$. Therefore, from Proposition \ref{elliptique}, together with the fact that $\|\partial_yu\|_{L^2(\Omega)}\sim \hbar^{-1}\|u\|_{L^2(\Omega)}\leq Ch^{-2}\|u\|_{L^2(\Omega)}^2$,
$$ |\mathrm{II}|\leq \mathcal{R}_1:=C_{T,N}h^N\|u(0)\|_{L^2(\Omega)}^2.
$$
Next, we observe that the supports of  $1-\chi_2(y),\chi_2'(y),\chi_2''(y)$ are both contained in $\{y:\phi_1(y)>0 \}$. Thus from integration by part, we obtain that
$$ |\mathrm{III}|\leq \mathcal{R}_2:=C\int_{\R}\int_{\Omega}\varphi_T(t)\phi_1(y)^2\left[|\partial_xu|^2+|x\partial_yu|^2+|u|^2 \right]dxdydt.
$$
Similarly, doing integration by part, we deduce that the left hand side of \eqref{HFPf-1} equals to
$$ 2\int_{\R}\varphi_T(t) \chi_1 (x) \chi_2 (y) \bigl( |\partial_x u|^2 + |x\partial_y u|^2\bigr) dydxdt + O(\mathcal{R}_1)+O(\mathcal{R}_2).
$$
Using again Proposition~\ref{elliptique} we can eliminate the $\chi_1$ cut off and get (adding another $\mathcal{R}_1$ error)
$$ 2\int_{\R}\varphi_T(t) \chi_2 (y) \bigl( |\partial_x u|^2 + |x\partial_y u|^2\bigr) dydxdt + O(\mathcal{R}_1)+O(\mathcal{R}_2).
$$
Moreover, 
$$ |\left(\varphi_T'(t)\chi_1\chi_2x\partial_xu,u\right)_{L^2(\R\times\Omega)}|\leq C_{T}\epsilon\|u(0)\|_{\dot{H}_G^1(\Omega)}\|u(0)\|_{L^2(\Omega)},
$$
since on the support of $\chi_1(x)$, $|x|\leq C\epsilon$. This term is in lower order and can be bounded by
$$\mathcal{R}_3\leq \epsilon\|u(0)\|_{\dot{H}_G^1(\Omega)}^2+C_{T,a'}\epsilon\|u(0)\|_{L^2(\Omega)}^2.
$$
 Thus, using that $\chi_2 + \phi \geq 1$ on $(- \pi, \pi)$, (because we can add $\mathcal{R}_2$ on the l.h.s.) we obtain from \eqref{HFPf-1} that 
\begin{equation}\label{HFPf-2}
\begin{split}
2\int_{\R}\varphi_T(t)\|u(t)\|_{\dot{H}_G^1(\Omega)}^2dt\leq &2|\left(\varphi_T'(t)\chi_2(y)y\partial_yu,u\right)_{L^2(\R\times\Omega)}|+ \mathcal{R}_1+\mathcal{R}_2+\mathcal{R}_3.
\end{split}
\end{equation}
Pick $a'\in(a_0,T)$, such that $\phi_1(y)>C_{a'}^{-1}>0$ for all $a'\leq |y|\leq \pi$. Therefore, the first term on the right side of \eqref{HFPf-2} can be majorized by
\begin{equation}\label{HFPf-3}
\begin{split}
&\underbrace{2a'\int_{(1-\epsilon_0)T\leq |t|\leq T} |\varphi_T'(t)|\|\partial_yu(t)\|_{L^2(|y|\leq a')}\|u(t)\|_{L^2(|y|\leq a')}dt}_{\mathrm{IV}}\\
+& \underbrace{C_{a'}^2\int_{\R}|\varphi_T'(t)|\|\phi_1(y)\partial_yu(t)\|_{L^2(\Omega)}\|\phi_1(y)u(t)\|_{L^2(\Omega)}dt}_{\mathrm{V}}.
\end{split}
\end{equation}
 Note that by our choice of $\phi_1$ and the localization property of $u=\psi(\hbar D_y)u$, we can write
$$ \phi_1(y)\partial_yu=\phi_1(y)\partial_y(\phi(y)u)=\phi_1(y)\partial_y\psi(\hbar D_y)(\phi(y)u)+\phi_1(y)\partial_y[\phi(y),\psi(\hbar D_y)]u.
$$
Using the fact that $\|\hbar\partial_y[\phi,\psi(\hbar D_y) ] \|_{L^2\rightarrow L^2}=O(\hbar)$,
 we can majorize the second term of \eqref{HFPf-3} by
$$ \mathrm{V}\leq \mathcal{R}_4:=C_{T,a'}\int_{-T}^T\hbar^{-1}\|\phi(y)u(t)\|_{L^2(\Omega)}^2dt+C_{T,a'}\|u(0)\|_{L^2(\Omega)}^2.
$$
It remains to treat IV.

	Using the spectral localization $||\hbar D_y|-1|\leq \delta$ of $u$, we can majorize IV by
\begin{equation}\label{c3}
\frac{2a'(1+3\delta)}{\hbar}\int_{(1-\epsilon_0)\leq |t|\leq T}|\varphi_T'(t)|\|u(t)\|_{L^2(\Omega)}^2dt.
\end{equation}	
From the conservation of mass and Lemma \ref{timetruancation}, we have that
$$ \eqref{c3} \leq \frac{2a'(1+3\delta_0)}{\hbar T(1-\epsilon_0)}\int_{\R}\varphi_T(t)\|u(t)\|_{L^2(\Omega)}^2dt.
$$ 
Denote by $\Pi_{\pm}$ be the projector to positive(negative) Fourier modes in $y$. We write
$$ \|u(t)\|_{L^2(\Omega)}^2=(\Pi_{+}u(t),\Pi_{+}u(t))_{L^2(\Omega)}+(\Pi_-u(t),\Pi_-u(t))_{L^2(\Omega)}.
$$
Hence we have 
$$ |((1\mp\hbar D_y)\Pi_{\pm}u(t),\Pi_{\pm}u(t))_{L^2(\Omega)}|\leq \delta \|u(t)\|_{L^2(\Omega)}^2.
$$
Now using the fact that $[\partial_x,x\partial_y]=\partial_y$ and $[\partial_x,\Pi_{\pm}]=0$, we obatin that
\begin{equation*}
\begin{split}
\|u(t)\|_{L^2(\Omega)}^2\leq &2\delta\|u(t)\|_{L^2(\Omega)}^2+2\hbar \|\partial_x\Pi_{+}u(t)\|_{L^2(\Omega)}\|x\partial_y\Pi_+u(t)\|_{L^2(\Omega)}\\+&2\hbar\|\partial_x\Pi_{-}u(t)\|_{L^2(\Omega)}\|x\partial_y\Pi_-u(t)\|_{L^2(\Omega)}\\
\leq &2\delta\|u(t)\|_{L^2(\Omega)}^2+\ \hbar\|u(t)\|_{\dot{H}_G^1(\Omega)}^2.
\end{split}
\end{equation*}
	Plugging into \eqref{HFPf-2}, we have
\begin{equation}\label{c4}
\begin{split}
2\|\varphi_T^{1/2}u\|_{L^2(\R;\dot{H}_G^1(\Omega))}^2\leq  & \frac{2a'(1+3\delta)}{T(1-\epsilon_0)}\|\varphi_T^{1/2}u\|_{L^2(\R;\dot{H}_G^1(\Omega))}^2+C_{T,a'}\hbar^{-1}\delta\|u(0)\|_{L^2(\Omega)}^2+\sum_{k=1}^4\mathcal{R}_k.
\end{split}
\end{equation}
Note that if $T>a_0$, we can choose $\delta_0>0, \epsilon_0>0$ sufficiently small, such that
$$  \frac{a'(1+3\delta)}{T(1-\epsilon_0)}<1.
$$
Then we choose $\hbar_0,h_0>0,\sigma_0>0$ sufficiently small, then for any $0<h<h_0, 0<\hbar<\hbar_0$,  $2e_0<\rho<\sigma_0\hbar^{-1/2}$, and $\frac{e_0 h^2}{2}<\hbar<\rho h^2$.
Substituting the expression of $\mathcal{R}_k$, the proof of Proposition \ref{Half-wavelemma1} is then complete in the particular case where there is only one band in $\omega^c$.
	
	 In the general case, there may be several bands (all with width at most $\mathcal{L}(\omega)$). 
In the previous proof, we replace the cut-off $\chi_2$ by a family of cut-off $\chi_2^j(y)$ equal to $1$ on $(a_j, b_j)$ and satisfying all the other assumptions of $\chi_2$ and we can perform the same estimates with the commutator relation~\eqref{commut} replaced by 
\begin{equation}\label{commuttbis}
 [\Delta_G,x\partial_x+2\bigl(y - \frac{ b_j + a_j} 2 \Bigr) \partial_y]=\Delta_G,
\end{equation}
(i.e. centering our estimates on the middle of the segment $(a_j, b_j))$
leading to the following version of~\eqref{HFPf-2}
\begin{equation}\label{HFPf-4}
2\int_{\R}\varphi_T(t)\|u(t)\|_{\dot{H}_G^1(\Omega)}^2dt\leq 2\sum_{j=1}^N \big|\left(\varphi_T'(t)\chi^j_2(y)\Bigl( y- \frac{ b_j + a_j} 2 \Bigr) \partial_yu,u\right)_{L^2(\R\times\Omega)}\big|+\sum_{k=1}^4\mathcal{R}_k,
\end{equation}
and we conclude the proof as previously.
\end{proof}
Next, we use a simple integration by part argument to pass to the $L^2$ observability estimate. 
\begin{cor}\label{HWCor1}
Under the same assumption as Proposition \ref{Half-wavelemma1}, we have
	$$ \|u_{h,\hbar,\delta}(0)\|_{L^2(\Omega)}^2\leq C_{T,a_0,b_0}\int_{-T}^T\|\phi(y)u_{h,\hbar,\delta}(t)\|_{L^2(\Omega)}^2.
	$$
\end{cor}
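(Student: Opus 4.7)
The plan is to upgrade the $\dot H_G^1$-level observability of Proposition~\ref{Half-wavelemma1} to an $L^2$-level one by using the Schr\"odinger equation itself to convert the energy-localized observation term on its right-hand side into an $L^2$-localized one, paying a factor $h^{-2}$ that will ultimately be compensated by the $h^2$ coming from the spectral localization.

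The first step is to multiply $i\partial_tu+\Delta_Gu=0$ by $\phi(y)^2\bar u$, integrate over $\Omega$ (the Dirichlet condition in $x$ kills the $x$-boundary terms), and take real parts. Combining $2\Re(\bar u\partial_yu)=\partial_y|u|^2$ with one further integration by parts in $y$ yields the cutoff energy identity
\begin{equation*}
\int_\Omega\phi^2\bigl(|\partial_xu|^2+x^2|\partial_yu|^2\bigr)\,dxdy=-\Im\int_\Omega\phi^2\bar u\partial_tu\,dxdy+\int_\Omega(\phi\phi')'(y)\,x^2|u|^2\,dxdy.
\end{equation*}
Integrating in $t\in[-T,T]$, using $\partial_tu=i\Delta_Gu$ together with the spectral localization $u=\widetilde\psi(-h^2\Delta_G)u$ (which forces $\|\partial_tu(t)\|_{L^2(\Omega)}\leq Ch^{-2}\|u(0)\|_{L^2(\Omega)}$ by mass conservation), and applying Cauchy--Schwarz in $(t,x,y)$, I obtain, with $Y:=\bigl(\int_{-T}^T\|\phi u\|_{L^2(\Omega)}^2\,dt\bigr)^{1/2}$,
\begin{equation*}
\int_{-T}^T\!\!\int_\Omega\phi^2\bigl(|\partial_xu|^2+x^2|\partial_yu|^2\bigr)\,dxdydt\leq C_Th^{-2}\|u(0)\|_{L^2(\Omega)}Y+C_T\|u(0)\|_{L^2(\Omega)}^2,
\end{equation*}
the correction term being controlled via $|x|\leq 1$ and mass conservation.

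Next I substitute into \eqref{Ine:Half-wave-regime} and use the matching spectral lower bound $\|u(0)\|_{\dot H_G^1(\Omega)}^2=\langle-\Delta_Gu(0),u(0)\rangle\geq c\,h^{-2}\|u(0)\|_{L^2(\Omega)}^2$ coming from the support of $\widetilde\psi$. Multiplying through by $h^2$ and invoking the half-wave constraint $\hbar>e_0h^2/2$, which gives the uniform bound $h^2\hbar^{-1}=O(1)$, one arrives at
\begin{equation*}
\|u(0)\|_{L^2(\Omega)}^2\leq C_{T,a_0,b_0}\bigl(\|u(0)\|_{L^2(\Omega)}Y+Y^2+(h^2+\delta)\|u(0)\|_{L^2(\Omega)}^2\bigr).
\end{equation*}
For $h<h_0$ and $\delta<\delta_0$ both small enough, the $(h^2+\delta)$ term is absorbed into the left-hand side, and Young's inequality applied to $\|u(0)\|_{L^2(\Omega)}Y$ concludes the argument.

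The main (mild) obstacle is the absorption step: it works precisely because the $h^{-2}$ cost of trading $\partial_tu$ for $u$ via the equation is exactly cancelled by the $h^2$ factor from the spectral lower bound $-\Delta_G\gtrsim h^{-2}$, while simultaneously the $\hbar^{-1}$ singularity in Proposition~\ref{Half-wavelemma1} becomes harmless after multiplication by $h^2$ thanks to $\hbar\sim h^2$. Outside the half-wave regime at least one of these two compensations would fail, which is why this step is performed microlocally in the regime $\hbar\gtrsim h^2$.
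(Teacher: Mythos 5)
Your argument is correct and follows the same route as the paper: you use the spectral lower bound $\|u(0)\|_{\dot H_G^1}^2\gtrsim h^{-2}\|u(0)\|_{L^2}^2$, bound the weighted $\dot H_G^1$-term on the right of Proposition~\ref{Half-wavelemma1} by $h^{-2}\|\phi u\|_{L^2((-T,T)\times\Omega)}\|u(0)\|_{L^2}$ through integration by parts, use $\hbar^{-1}\lesssim h^{-2}$ to control the $\hbar^{-1}$-weighted term, and conclude with Young and absorption. The only cosmetic difference is that you insert the equation $\partial_t u = i\Delta_G u$ and take real parts, whereas the paper integrates by parts directly against $\Delta_G u$ and bounds $\|\Delta_G u\|_{L^2}$ via the spectral cutoff; these are the same estimate written differently.
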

\begin{proof}
	The right side of \eqref{Ine:Half-wave-regime} is
	$$ \|u(0)\|_{\dot{H}_G^1(\Omega)}^2\sim h^{-2}\|u(0)\|_{L^2(\Omega)}^2.
	$$
	Since $\hbar^{-1}<\frac{2}{e_0h^2}$, the left side of \eqref{Ine:Half-wave-regime} can be majorized be
$$ C_{T,a_0}\int_{-T}^T\int_{\Omega}\phi(y)^2\left[|\partial_xu|^2+|x\partial_yu|^2\right]dxdydt+C'_{T,a_0}\left(1+\delta h^{-2}+h^{-2}\int_{-T}^T\int_{\Omega}\phi(y)^2|u|^2dtdxdy\right).
$$
Doing integration by part, we have
\begin{multline*}
\int_{-T}^T\int_{\Omega}\phi(y)^2\left[|\partial_xu|^2+|x\partial_yu|^2\right]dxdydt\\
=-\int_{-T}^T\int_{\Omega}\left[\phi(y)^2\Delta_Gu\ov{u}dxdydt+2\phi\phi'(y)x^2\partial_yu\ov{u}\right]dxdydt.
\end{multline*}
By Cauchy-Schwartz, the right hand side can be bounded by
$$ \|\phi(y)u\|_{L^2((-T,T)\times \Omega)}\|\Delta_G u\|_{L^2((-T,T)\times\Omega)}+\|\phi(y)u\|_{L^2((-T,T)\times \Omega)}\|u\|_{\dot{H}_G^1(\Omega)}.
$$
From the localization of the spectral, we have
$$\int_{-T}^T\int_{\Omega}\phi(y)^2\left[|\partial_xu|^2+|x\partial_yu|^2\right]dxdydt\leq \frac{C_T}{h^2}\|\phi(y)u\|_{L^2((-T,T)\times\Omega)}\|u(0)\|_{L^2(\Omega)}. 
$$
Finally, applying Young's inequality of the form
$$ ab\leq \epsilon a^2+C(\epsilon)b^2,
$$
the proof of Corollary \ref{HWCor1} is complete.
\end{proof}

The following proposition shows that it is the regime for which $\hbar$ is close to $h^2$ that induces the condition $T>a_0$.
\begin{prop}\label{pseudo-HWregime}
Assume that $T>0$, then there exist $b_0>0, \rho_0>0, h_0>0, \hbar_0>0,\delta_0>0$, and $C_T>0$, such that for all $0<h<h_0, 0<\hbar<\hbar_0, 0<\delta<\delta_0$, and $\rho\geq \rho_0$, obeying $\rho h^2<\hbar\leq b_0h$, we have
\begin{equation}\label{Ine:pseudo-HWregime}
\begin{split}
\|u_{h,\hbar,\delta}(0)\|_{L^2(\Omega)}^2\leq C_T\int_{-T}^T\|\phi(y)u_{h,\hbar,\delta}(t)\|_{L^2(\Omega)}^2dt.
\end{split}
\end{equation}
\end{prop}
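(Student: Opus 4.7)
The plan is to follow the same positive commutator strategy as in Proposition~\ref{Half-wavelemma1}, but to exploit the additional spectral gap hypothesis $\hbar \geq \rho h^2$ with $\rho$ large to absorb the boundary term that previously forced the time restriction $T > a_0$. This will yield the observability for arbitrary $T > 0$.

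First, I would introduce the same cut-offs as in the proof of Proposition~\ref{Half-wavelemma1}: a time cut-off $\varphi_T(t) = \varphi(t/T)$ as in Lemma~\ref{timetruancation}, a spatial cut-off $\chi_1(x) = \chi(x/\epsilon)$ with $\epsilon = \max\{h^{1-\sigma}, 5 c_1^{-1} \hbar/h\}$ so that the elliptic regularity Proposition~\ref{elliptique} disposes of all contributions where $1 - \chi_1$, $\chi_1'$ or $\chi_1''$ appears, and a $y$-cut-off $\chi_2(y)$ equal to $1$ on the support of $1 - \phi$ and supported in $(-\pi,\pi)$, with $\chi_2'$ supported where $\phi \equiv 1$. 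Testing the commutator identity $[\Delta_G, x\partial_x + 2y\partial_y] = 2\Delta_G$ through $\varphi_T \chi_1 \chi_2$ against $u = u_{h,\hbar,\delta}$ and using the equation, the computation of Proposition~\ref{Half-wavelemma1} yields, after the same elliptic and boundary error bookkeeping, the intermediate inequality
$$
2 \int_{\R} \varphi_T(t)\,\|u(t)\|_{\dot H_G^1(\Omega)}^2\,dt \leq 2\bigl|\bigl(\varphi_T'(t)\chi_2(y)\, y\,\partial_y u,u\bigr)_{L^2(\R\times\Omega)}\bigr| + \sum_{k=1}^{4}\mathcal{R}_k,
$$
where the remainders $\mathcal{R}_1,\dots,\mathcal{R}_4$ are precisely those appearing in Proposition~\ref{Half-wavelemma1} and are either $O(h^\infty)$ or bounded by admissible quantities (an $L^2$-observation term with loss $\hbar^{-1}$ plus a multiple of $\|u(0)\|_{L^2}^2$).

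The crucial new ingredient is the treatment of the boundary term on the right-hand side. The spectral localization $u = \psi(-h^2\Delta_G)u$ implies the two-sided equivalence $\|u(t)\|_{\dot H_G^1(\Omega)}^2 \sim h^{-2}\|u(t)\|_{L^2(\Omega)}^2$, and the localization $|{|\hbar D_y|-1}| \leq \delta$ together with conservation of mass gives, exactly as in the proof of Proposition~\ref{Half-wavelemma1},
$$
2\bigl|\bigl(\varphi_T'(t)\chi_2(y)\, y\,\partial_y u,u\bigr)\bigr| \leq \frac{C\,\pi\,(1+3\delta)}{\hbar\,T(1-\epsilon_0)}\int_{\R}\varphi_T(t)\|u(t)\|_{L^2(\Omega)}^2\,dt.
$$
In the present regime $\hbar \geq \rho h^2$ we may replace $\hbar^{-1}\|u\|_{L^2}^2$ by $(h^2/\hbar)\|u\|_{\dot H_G^1}^2 \leq \rho^{-1}\|u\|_{\dot H_G^1}^2$, so that the boundary term is bounded by $C\pi T^{-1}\rho^{-1}\int \varphi_T \|u\|_{\dot H_G^1}^2\,dt$. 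For $\rho \geq \rho_0(T)$ large enough this contribution is absorbed into the left-hand side for any $T > 0$, and no constraint of the form $T > a_0$ arises.

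Once the $\dot H_G^1$-level estimate is obtained, the passage to the $L^2$-observability~\eqref{Ine:pseudo-HWregime} follows verbatim from the argument of Corollary~\ref{HWCor1}: the energy on the left is comparable to $h^{-2}\|u(0)\|_{L^2}^2$, integration by parts converts $\int \phi^2(|\partial_x u|^2 + |x\partial_y u|^2)$ into $\int \phi^2 \Delta_G u\,\bar u$ plus boundary commutator terms supported where $\phi \neq 0$, the spectral localization $\|\Delta_G u\|_{L^2} \sim h^{-2}\|u\|_{L^2}$ and Young's inequality $ab \leq \varepsilon a^2 + C(\varepsilon)b^2$ then close the estimate. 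The main technical obstacle I anticipate is the careful tracking of the parameters $h,\hbar,\delta,\rho,\epsilon$: one must verify that Proposition~\ref{elliptique} still applies in the full range $\rho h^2 < \hbar \leq b_0 h$ with the chosen $\epsilon$, and that the $\delta\hbar^{-1}$ loss in the remainders can be absorbed by taking $\delta_0$ small independently of $\rho$. Everything else is a rerun of Proposition~\ref{Half-wavelemma1} under strictly more favourable quantitative assumptions.
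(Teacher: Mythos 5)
Your argument is correct and follows the paper's proof in all essentials: the same positive commutator identity with cut-offs $\varphi_T\chi_1\chi_2$, the same treatment of the elliptic and $y$-boundary errors via Proposition~\ref{elliptique} and the observation terms, and crucially the same mechanism — trading $\hbar^{-1}\|u\|_{L^2}^2$ for $(h^2/\hbar)\|u\|_{\dot H_G^1}^2\leq\rho^{-1}\|u\|_{\dot H_G^1}^2$ via the spectral equivalence $\|u\|_{\dot H_G^1}^2\sim h^{-2}\|u\|_{L^2}^2$ — to absorb the boundary term for $\rho\geq\rho_0(T)$, which removes the constraint $T>a_0$. The only cosmetic difference is where the absorption is performed: you absorb the dangerous term at the $\dot H_G^1$-level before passing to the $L^2$ observability, whereas the paper bounds the $y\partial_y$ contribution crudely by $C_T\hbar^{-1}\|u(0)\|_{L^2}^2$ and the $x\partial_x$ contribution by $C_T\epsilon\|u(0)\|_{\dot H_G^1}\|u(0)\|_{L^2}$, then divides by $h^{-2}$ and uses $h^2\hbar^{-1}\leq\rho^{-1}$ and $\epsilon\lesssim b_0$ to close the estimate at the $L^2$-level, choosing $C_T b_0 + C_T\rho_0^{-1}<\tfrac12$. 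Both routes are equivalent by conservation of mass and energy, and your technical concerns (applicability of Proposition~\ref{elliptique} since $\rho\geq\rho_0>e_0/4$, smallness of $\delta_0$ independently of $\rho$) are correctly resolved exactly as you anticipate.
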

\begin{proof}
We use the same positive commutator method as in the proof of Proposition \ref{Half-wavelemma1}, but the proof is much simpler in the present situation. We adapt the notation there. Indeed, from \eqref{HFPf-1}, the term $|\mathrm{II}|,|\mathrm{III}|$ can be majorized by the same bound $\mathcal{R}_1,\mathcal{R}_2$. For I=$i(\varphi_T'(t)\chi_1\chi_2(x\partial_x+2y\partial_y)u,u)_{L^2(\R\times\Omega)}$, we estimate it as 
$$ |(\varphi_T'(t)\chi_1\chi_2x\partial_xu,u )_{L^2(\R\times\Omega)}|\leq C_T\epsilon\|u(0)\|_{\dot{H}_G^1(\Omega)}\|u(0)\|_{L^2(\Omega)},
$$
where $\epsilon=\max\{h^{1-\sigma}, 5c_1^{-1}\hbar h^{-1} \}$,
and
$$ |(\varphi_T'(t)\chi_1\chi_2y\partial_yu,u )_{L^2(\R\times\Omega)}|\leq C_T\|\partial_yu(0)\|_{L^2(\Omega)}\|u(0)\|_{L^2(\Omega)}\leq C_T\hbar^{-1}\|u(0)\|_{L^2(\Omega)}^2.
$$
Therefore,
\begin{equation}
\begin{split}
\int_{\R}\varphi_T(t)\|u(t)\|_{\dot{H}_G^1(\Omega)}^2dt\leq & C_T\int_{-T}^T\int_{\Omega}\phi(y)^2\left[|\partial_xu|^2+|x\partial_yu|^2 \right]dxdydt\\
+&C_T\epsilon\|u(0)\|_{\dot{H}^1(\Omega)}\|u(0)\|_{L^2(\Omega)}+C_T\hbar^{-1}\|u(0)\|_{L^2(\Omega)}^2.
\end{split}
\end{equation}
Note that $\|u(t)\|_{\dot{H}_G^1(\Omega)}=\|u(0)\|_{\dot{H}_G^1(\Omega)}\sim h^{-2}$, after doing integration by part as in the proof of Corollary \ref{HWCor1}, we obtain that
$$ \|u(0)\|_{L^2(\Omega)}^2\leq C_T\int_{-T}^T\|\phi(y)u(t)\|_{L^2(\Omega)}^2dt+C_T\epsilon\|u(0)\|_{L^2(\Omega)}^2+C_Th^2\hbar^{-1}\|u(0)\|_{L^2(\Omega)}^2.
$$ 
Note that $\hbar h^{-1}\leq b_0,\quad   h^2\hbar^{-1}\leq \rho^{-1}$. We choose $\rho_0>0$ large enough and $b_0>0$ small enough, such that
$$ C_Tb_0+C_T\rho_0^{-1}<\frac{1}{2}.
$$
This completes the proof of Proposition \ref{pseudo-HWregime}.
\end{proof}

\subsection{\texorpdfstring{$L^2$}{L2} observability in the half-wave regime}

\begin{proof}[Proof of Proposition \ref{regime:Half-wave}]
	Fix $R>1$, close enough to $1$ such that 
	$$ \max\{R-1, 1-R^{-1}\}<\delta,
	$$
	for some $\delta$ fixed as in the Corollary \ref{HWCor1} and Proposition \ref{pseudo-HWregime}. Choose a cut-off funtion $\beta_R\in C_c^{\infty}((R^{-1},R))$, $\beta_R\geq 0$ and
	$$ \beta_0(s)+\sum_{k=1}^{\infty}\beta_{R,k}(s)=1,\quad \forall s>0.
	$$ 
	where $\beta_{R,k}(s):=\beta_R(R^{-k}s)$ and $\beta_0(s)\in C_c^{\infty}\left(\left[0,\frac{R+1}{2R}\right)\right)$.

	We will apply Proposition \ref{HWCor1} for the semi-classical parameter $\hbar=\lambda=R^{-k}$ with large $k$, and denote by $\rho=\rho_0$ as in Proposition \ref{pseudo-HWregime}. 	Fix the small parameters $\sigma_0>0$ as in Proposition \ref{Half-wavelemma1}, and we choose $h_0>0$ smaller than the one required in Proposition \ref{Half-wavelemma1}, Proposition \ref{pseudo-HWregime}, and $\sigma_0\rho^{-3/2}$. Note that in this case, if
	$ \frac{e_0h^2}{2}<\hbar<\rho h^2,
	$
	the constraint $\rho<\sigma_0 \hbar^{-1/2}$ to apply Proposition \ref{Half-wavelemma1} is satisfied automatically.
	 Denote simply by $u_h=\Pi_{h}^{b_0h}u$, we know that $\beta_{R,k}(|D_y|)u_h=0$ unless 
	$$ N_h:= -\frac{\log(b_0h)}{\log R}\leq k\leq M_h:=-\frac{\log (e_0h^2/2)}{\log R}.
	$$
	Now from almost orthogonality
	$$ C_R^{-1}\|u_h\|_{L^2(\Omega)}^2\leq  \sum_{k\geq N_h}\|\beta_{R,k}(|D_y|)u_h\|_{L^2(\Omega)}^2\leq C_R\|u_h\|_{L^2(\Omega)}^2
	$$
	and Corollary \ref{HWCor1}, Proposition \ref{pseudo-HWregime}, we have that 
	\begin{equation}\label{Littlewood-sum}
	\begin{split}
	\|u_h(0)\|_{L^2(\Omega)}^2\leq &C_R\sum_{ N_h\leq k\leq M_h}\left\|\beta_{R,k}(|D_y|)u_h(0)\right\|_{L^2(\Omega)}^2\\ \leq &C_{T,R}\sum_{N_h\leq k\leq M_h}\int_{-T}^T\|\phi(y)\beta_{R,k}(|D_y|)u_h(t)\|_{L^2(\Omega)}^2dt.
	\end{split}
	\end{equation}
	Note that $\|[\phi(y),\beta_{R,k}(|D_y|)]\|_{L^2(\T)\rightarrow\ L^2(\T)}\leq C_R R^{-k}$,  we have
	$$ \eqref{Littlewood-sum} \leq C_{T,R}\int_{-T}^T\|\Pi_h^{b_0h}(\phi(y)u_h(t))\|_{L^2(\Omega)}^2dt+C_{T,R}\sum_{N_h\leq k\leq M_h}R^{-2k}\int_{-T}^T\|u_h(t)\|_{L^2(\Omega)}^2dt.
	$$
	The second term on the right hand side can be bounded from above by
	$$ C_{T,R}R^{-2N_h}\|u_h(0)\|_{L^2(\Omega)}^2\leq C_{T,R}(b_0h)^2\|u_h(0)\|_{L^2(\Omega)}^2.
	$$
	By taking $h<h_0$ small enough, it can be absorbed to the left side. The proof of Proposition \ref{regime:Half-wave} is now complete.
\end{proof}

\section{Dispersive regime I: \texorpdfstring{$b_0 h^{-1} \leq |D_y| \leq b_0^{-1 } h^{-1}$:}{} semi-classical control}\label{sec.5}
Fix $0<b_0<1$ from the last section. 
Denote by
$$  \Pi_{h,b_0}=\psi(h^2\Delta_G)\chi_0(b_0hD_y)(1-\chi_0(b_0^{-1}hD_y)).
$$
We will prove the following proposition in this section.
\begin{prop}\label{Regime:GCC}
	Given $T>0$, there exsit $h_0>0$, such that for all $0<h<h_0$, the following inequality holds true for all $u_0\in L^2(\Omega)$:
	\begin{equation}\label{eq:regimeGCC}
	\|\Pi_{h,b_0}u_0\|_{L^2(\Omega)}^2\leq C_T\int_{-2T}^{2T}\|\phi(y)e^{it\Delta_G}\Pi_{h,b_0}u_0\|_{L^2(\Omega)}^2.
	\end{equation}	
\end{prop}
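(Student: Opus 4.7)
The strategy follows Lebeau's semi-classical reduction. I rescale time by $s=t/h$, setting $v(s,x,y):=u(hs,x,y)$ with $u(t)=e^{it\Delta_G}\Pi_{h,b_0}u_0$. Then $v$ solves the semi-classical equation $ih\partial_s v + h^2\Delta_G v=0$, $v(0)$ is spectrally localized so that $h^2(-\Delta_G)\in[1/4,4]$ and $h|D_y|\in[2b_0,3/b_0]$, and \eqref{eq:regimeGCC} reduces to establishing a fixed-time semi-classical observability
\begin{equation}\label{plan-sc-obs}
\|v(s_0)\|_{L^2(\Omega)}^2\leq C_0\int_{s_0}^{s_0+S_0}\|\phi(y)v(s)\|_{L^2(\Omega)}^2\,ds
\end{equation}
for some $S_0=S_0(b_0)>0$ and $C_0>0$, uniformly in the starting time $s_0$ and in small $h$. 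Once \eqref{plan-sc-obs} is available, splitting $[-2T/h,2T/h]$ into $N\sim 4T/(hS_0)$ windows of length $S_0$, applying \eqref{plan-sc-obs} on each window (using $L^2$-conservation of $v$), summing, and translating back via $\int\|\phi v\|^2\,ds=h^{-1}\int\|\phi u\|^2\,dt$ absorbs the $h^{-1}$ loss and yields \eqref{eq:regimeGCC}.

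To prove \eqref{plan-sc-obs} I argue by contradiction using semi-classical defect measures. Assume a sequence $h_k\to 0$ with $v_{k,0}=\Pi_{h_k,b_0}v_{k,0}$, $\|v_{k,0}\|_{L^2}=1$, and $\int_0^{S_0}\|\phi v_k(s)\|^2\,ds\to 0$. Extract a nonnegative weakly-measurable family of semi-classical defect measures $\mu_s$ on $T^*\Omega$ for $s\in[0,S_0]$, supported in the compact set $K_{b_0}=\{\xi^2+x^2\eta^2\in[1/16,16],\ |\eta|\in[2b_0,3/b_0]\}$ (the $\eta$-localization is imposed by the $\chi_0$ factors in $\Pi_{h,b_0}$, and $|x|\leq 2/b_0$ on the energy shell). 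Standard semi-classical propagation together with the Melrose--Sj\"ostrand reflection rule at the smooth non-degenerate boundary $x=\pm 1$ gives $\mu_s=(\Phi_s)_*\mu_0$, where $\Phi_s$ is the generalized bicharacteristic flow of $p=\xi^2+x^2\eta^2$, and the vanishing of the observation forces $\mathrm{supp}\,\mu_s\subset\{\phi(y)=0\}$ for almost every $s$.

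The conclusion follows from a geometric control statement: every generalized bicharacteristic issued from a point of $K_{b_0}$ meets $\{\phi(y)>0\}$ within time $S_0$. Indeed $\eta$ is conserved with $|\eta|\geq 2b_0$, the energy constraint $\xi^2+x^2\eta^2\geq 1/16$ forces $|\xi|\geq 1/4$ whenever $x=0$ (so orbits cross the degenerate line transversally), and $\dot y=2x^2\eta$ has constant sign $\mathrm{sign}(\eta)$; the time-average of $x^2$ over a period of the $(x,\xi)$-motion is therefore bounded below by some $c(b_0)>0$ uniformly over $K_{b_0}$. Hence $y$ drifts monotonically at a uniform rate, so for $S_0$ large enough the drift exceeds $\mathcal{L}(\omega)$ and the trajectory must enter $\{\phi(y)>0\}$ by definition of $\mathcal{L}(\omega)$. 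Combined with the invariance $\mu_s=(\Phi_s)_*\mu_0$, this yields $\mu\equiv 0$, contradicting $\|v_{k,0}\|_{L^2}=1$.

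The main obstacle is the uniform lower bound on $\langle x^2\rangle_{\text{orbit}}$: one must separately handle purely oscillatory orbits with $E<\eta^2$ (confined in the interior of $\Omega$) and bouncing orbits with $E>\eta^2$ (reflecting off $x=\pm 1$) over the full frequency annulus $|\eta|\in[2b_0,3/b_0]$, and verify that the defect measure propagates cleanly through the Dirichlet reflections in the Melrose--Sj\"ostrand framework (the glancing set $\{x=\pm 1,\xi=0,|\eta|\in[1/2,2]\}$ requires standard but careful treatment, facilitated by the transversal non-degeneracy of $p$ at the boundary away from the degenerate line).
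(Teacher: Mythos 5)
Your proof follows the same strategy as the paper: Lebeau's time rescaling $s=t/h$, reduction to a fixed-window semi-classical observability estimate iterated over $O(1/h)$ time windows using $L^2$-conservation, a contradiction argument with (boundary) semi-classical defect measures propagated along the generalized bicharacteristic flow, and the geometric verification that conserved $|\eta|\geq 2b_0$ plus the elliptical $(x,\xi)$-motion on the energy shell force $y$ to drift monotonically into the control region. The only points you flag as "main obstacles" are exactly what the paper settles: it checks explicitly that the glancing set at $x=\pm 1$ is strictly diffractive ($\widetilde{\mathcal G}=\widetilde{\mathcal G}^{2,+}$, via $H_p^2 x\neq 0$ there), invokes the boundary-measure framework of \cite{B04} to conclude $\mu$ is invariant under the compressed flow with no mass on the boundary strata, and inserts a $\chi(hD_s)$ cutoff to compactify in the $\sigma$-variable; your two cases ($E<\eta^2$ interior ellipse, $E>\eta^2$ reflecting orbit) are precisely the two regimes that make the uniform drift bound go through.
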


\begin{proof}
The proof follows essentially the strategy of Lebeau (\cite{Le}).	Making the change of variable $t=hs$,  $v(s,x,y)=e^{it\Delta_G}\Pi_{h,b_0}u_0(x,y)$ and $v_0=\Pi_{h,b_0}u_0$, the function $v$ satisfies the semi-classical Schr\"odinger equation
	\begin{equation}\label{semi-Schrodinger}
	\begin{split}
	ih\partial_sv+h^2\Delta_Gv=0.
	\end{split}
	\end{equation} 
	By using the conservation of $L^2$ norm, the proof of \eqref{Regime:GCC} reduces to the semi-classical observability: there exists $T_1=T_1(b_0)>0$, such that
	\begin{equation}\label{ob:semi}
	\|v_0\|_{L^2(\Omega)}^2\leq C_{T_1}\int_{-T_1}^{T_1}\|\phi(y)v(s,\cdot)\|_{L^2(\Omega)}^2ds.
	\end{equation}
	Indeed, to see how \eqref{ob:semi} implies \eqref{eq:regimeGCC}, we change back to $t$ variable and rewrite \eqref{ob:semi} as
	$$ \|\Pi_{h,b_0}u(0,\cdot)\|_{L^2(\Omega)}^2\leq \frac{C_{T_1}}{h}\int_{-hT_1}^{hT_1}\|\phi(y)\Pi_{h,b_0}u(t,\cdot)\|_{L^2(\Omega)}^2dt.
	$$ 
	For any $k\in\Z$, applying the inequality above to the initial data $\Pi_{h,b_0}u_0(khT_1,\cdot)$ and using the $L^2$ conservation, we obtain that
	$$ \|\Pi_{h,b_0}u_0\|_{L^2(\Omega)}^2=\|\Pi_{h,b_0}u(khT_1,\cdot)\|_{L^2(\Omega)}^2\leq \frac{C_{T_1}}{h}\int_{(k-1)hT_1}^{(k+1)hT_1}\|\phi(y)\Pi_{h,b_0}u(t,\cdot)\|_{L^2(\Omega)}^2dt.
	$$
	Summing over $ -\big\lfloor\frac{T}{T_1h}\big\rfloor+1 \leq k\leq \big\lfloor\frac{T}{T_1h}\big\rfloor-1$, we obtain \eqref{eq:regimeGCC}.
	
	Now we prove \eqref{ob:semi}.	With a little abuse of the notation, we denote by $v=\Pi_{h,b_0}v$.
	Again by conservation of the $L^2$ norm, it is sufficient to show that 
	\begin{equation}\label{ob:semi-1}
	\|v\|_{L^2((-2T_1,0)\times\Omega)}^2\leq C_{T_1}\|\phi(y)v\|_{L^2((-T_1,T_1)\times\Omega)}^2.
	\end{equation}
	Denote by $X=\R_t\times \Omega$ with boundary $\partial X=\R_s\times\partial\Omega$. 
	Take $\chi\in C_c^{\infty}(\R)$ such that $\chi|_{\textrm{ supp}(\psi)}\equiv 1$. Then, using that if $u= e^{ish\Delta_G} u_0$, $(1- \chi) (hD_s) u = (1- \chi)(- h^2 \Delta_G) u$, we get 
	$$ (1- \chi)(hD_s) e^{ish\Delta_G} \psi(h^2\Delta_G) =0.
	$$
%
	Therefore, it suffices to prove \eqref{ob:semi-1} for time-frequency localized functions $\chi(hD_s)v$. We argue by contradiction. If \eqref{ob:semi-1} is not true, then we obtain  sequences $h_k\rightarrow 0$ and $v_k=\chi(h_kD_s)\Pi_{h,b_0}v_k, $ satisfying
	\begin{equation}\label{contradiction} (ih_k\partial_s+h_k^2\Delta_G)v_k=0,\quad \|v_k(0)\|_{L^2(\Omega)}=1,\quad \textrm{ and }\quad  \|\phi(y)v_k\|_{L^2((-T_1,T_1)\times \Omega)}\rightarrow 0 
	\end{equation}
	as $k\rightarrow\infty$.
	From \cite{Ge91}\footnote{here we adopt the notation from \cite{B04}}, after substracting a subsequence, still denoted by $(v_k)$, there exists a Radon measure on $Z=j(\mathrm{Ch}(P))$, such that for any sum of the interior and tangential operators $A_h=A_{i,h}+A_{\partial,h}$, compactly supported in $s$, we have
	$$ \lim_{k\rightarrow\infty}(A_{h_k}v_k, v_k )_{L^2(X)}=\langle \mu, \kappa (\sigma(A_h))\rangle.
	$$
	Away from the boundary $\partial X=\R\times \partial\Omega$, this measure is invariant along the characteristic flow of $H_p=\partial_s-2\xi\partial_x+2x\eta^2\partial_{\xi}-2x^2\eta\partial_y$, where $p=-\sigma-\xi^2-x^2\eta^2$ is the principal symbol. Near the boundary, the cotangent bundle can be decomposed as
	$ T^*\partial X=\widetilde{\mathcal{E}}\cup \widetilde{\mathcal{H}}\cup \widetilde{\mathcal{G}}$, where
	\begin{equation*}
	\begin{split}
	&\widetilde{\mathcal{E}}=\{(x,s,y;\sigma,\eta): -\sigma-x^2\eta^2<0 \},\quad \widetilde{\mathcal{H}}=\{(x,t,y;\tau,\eta): -\sigma-x^2\eta^2>0 \},\\
	&\widetilde{\mathcal{G}}:=\{(x,s,y;\sigma,\eta): \sigma=-x^2\eta^2 \}.
	\end{split}
	\end{equation*} 
	Note that for any point $\rho\in\widetilde{\mathcal{G}}$, $(H_p^2x)|_{\rho}>0$ if $x(\rho)=-1$ and $(H_p^2x)|_{\rho}<0$ if $x(\rho)=-1$ where $\rho\mapsto x(\rho)$ is the boundary defining function. Therefore $\widetilde{\mathcal{G}}=\widetilde{\mathcal{G}^{2,+}}$, namely it consists only the diffractive points. It follows from \cite{B04}  that $\mu(\widetilde{\mathcal{H}})=\mu(\widetilde{\mathcal{E}})=\mu(\widetilde{\mathcal{G}^{2,+}})=0.$ Moreover, $\mu$ is invariant along the generalized bicharacteristic flow. Note that away from the reflexion points at the boundary $\{ x= \pm 1\}$, the flow is given by
	$$ \dot{s}=-\sigma= \tau_0,\quad \dot{y}=2x^2\eta,\quad \dot{\eta}=0 \Rightarrow \tau = \tau_0, \dot{x} =2\xi, \dot{\xi} = -2x \eta_0^2
	$$ 
	with initial data $-\sigma_0\in\textrm{supp}(\psi)\subset (\frac 1 4 , 4)$, $b_0\leq|\eta_0|\leq b_0^{-1}$. Integrating the flow,  in the $x,\xi$ variables gives an ellipse away from the boundary $\{ x= \pm1\}$. As a consequence, the variable $x$ is bounded away from $0$ for long sequences of time, and hence $y$ is strictly increasing for these sequences of time (and increasing otherwise). This shows  that there exists $T_1=T_1(b_0)>0$, such that for any $\rho$ on phase space,  $y(\pm T_1,\rho)\in \textrm{supp}(\phi)$, the controlled region. As in the assumption \eqref{contradiction}, $\mu|_{(-T_1,T_1)\times\textrm{supp}(\phi) }=0$ hence $\mu|_{(-2T_1,0)\times\Omega}=0$. This contradicts to $\|v_k\|_{L^2((-2T_1,0)\times \Omega)}=2T_1$, since $\|v_k(0)\|_{L^2(\Omega)}=1$ and $t\mapsto \|v_k(t)\|_{L^2(\Omega)}$ is a constant function. The proof of Proposition \ref{Regime:GCC} is now complete.
	
\end{proof}


\section{Dispersive regime II: \texorpdfstring{$ h^{-\epsilon}\leq |D_y|\leq b_0 h^{-1}$}{}}
We fix $0<b_0<1$ from the last sections. We fix another cutoff $\psi_0\in C_c^{\infty}(\R;[0,1])$ which is identically $1$ near the origin.  Denote by
$$ \underline{\Pi}_{h,b_0}^{\epsilon}:=\psi\left(h^2\Delta_G\right)\chi_0(b_0^{-1}hD_y)(1-\psi_0(h^{\epsilon}D_y) )
$$

In this section, we prove the following proposition:

\begin{prop}\label{ob:Regime2}
	Given $T>0$, there exist $h_0>0$, $C_T>0$,  such that the following observability holds true for all $0<h<h_0$ and $\epsilon>0$:
	$$ \|\underline{\Pi}_{h,b_0}^{\epsilon}u_0\|_{L^2(\Omega)}^2\leq C_T\int_0^T\|\phi(y)e^{it\Delta_G}\underline{\Pi}_{h,b_0}^{\epsilon_0}u_0\|_{L^2(\Omega)}^2dt + C_Th^{2\epsilon}\|u_0\|_{L^2(\Omega)}^2.
	$$

\end{prop}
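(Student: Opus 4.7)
The strategy is to adapt the positive commutator argument of Proposition~\ref{Half-wavelemma1}, which in this new regime becomes substantially simpler. The key observation is that on the spectral window of $\underline{\Pi}_{h,b_0}^{\epsilon}$ one has $\|\partial_y u\|_{L^2} \lesssim b_0 h^{-1}\|u\|_{L^2}$, strictly smaller than $\|u\|_{\dot{H}_G^1(\Omega)}\sim h^{-1}\|u\|_{L^2}$ by an extra factor $h^{-1}$. Consequently the term $\mathrm{IV}$ in~\eqref{HFPf-3}, which forced the sharp threshold $T > a_0$ in the half-wave regime by producing a contribution of size $\hbar^{-1}\|u\|_{L^2}^2$ comparable to $\|u\|_{\dot{H}_G^1}^2 \sim h^{-2}\|u\|_{L^2}^2$, is now a harmless perturbation and is absorbed trivially for any $T > 0$.

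Concretely, I would first decompose $\underline{\Pi}_{h,b_0}^{\epsilon} u_0$ via a Littlewood-Paley partition in $|D_y|$ into dyadic pieces $u_{h,\hbar,\delta}$ second-microlocalized at scale $|D_y|\sim \hbar^{-1}$, with $h^{-\epsilon/2}\lesssim \hbar^{-1}\lesssim b_0 h^{-1}$. On each piece, apply the commutator identity~\eqref{HFPf-1} with multiplier $\varphi_T(t)\chi_1(x)\chi_2(y)(x\partial_x + 2y\partial_y)$, where $\chi_2$ is chosen to vanish outside $\mathrm{supp}(\phi)$ and $\chi_1(x) = \chi(x/\epsilon)$ with $\epsilon = \max(h^{1-\sigma}, 5 c_1^{-1}\hbar h^{-1})$. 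Since $\hbar h^{-1}\leq b_0$ is bounded, Proposition~\ref{elliptique} still applies provided $b_0$ is small enough that $5 c_1^{-1}b_0$ lies within the flat region of $\chi$; this makes $|\mathrm{II}|\leq \mathcal{R}_1 = O_N(h^N)\|u\|_{L^2}^2$ and absorbs $|\mathrm{III}|$ into $\int_{-T}^T\|\phi(y)u\|_{\dot{H}_G^1(\Omega)}^2\,dt$ after integration by parts, exactly as in the half-wave proof.

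For the leading term $\mathrm{I}$, the $x\partial_x$ piece contributes $O(\epsilon)\|u(0)\|_{\dot{H}_G^1}\|u(0)\|_{L^2} = O(b_0 h^{-1})\|u(0)\|_{L^2}^2$, and the $y\partial_y$ piece is bounded crudely by $C_T\hbar^{-1}\|u(0)\|_{L^2}^2 \leq C_T b_0 h^{-1}\|u(0)\|_{L^2}^2$. Both are of order $h^{-1}$, hence negligible compared to $\|u\|_{\dot{H}_G^1}^2 \sim h^{-2}\|u(0)\|_{L^2}^2$, and are absorbed for any $T > 0$ once $h$ is small enough. Converting from $\dot{H}_G^1$ to $L^2$ observability by the integration-by-parts argument of Corollary~\ref{HWCor1} then gives the dyadic $L^2$ estimate at each scale $\hbar$.

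Finally, one sums the dyadic estimates by almost-orthogonality, as in the proof of Proposition~\ref{regime:Half-wave}. The commutators $[\phi(y), \beta_{R,k}(|D_y|)]$ have $L^2\!\to\! L^2$ norm $O(R^{-k})$, so summing $R^{-2k}$ over the relevant dyadic range (with lowest index $k$ satisfying $R^k\sim h^{-\epsilon/2}$) produces the error $C_T h^{\epsilon}\|u_0\|_{L^2}^2$ on the right-hand side of the statement. The main technical point to monitor is that no boundary term at $x=\pm 1$ creeps in, which is ensured by keeping $\chi_1$ compactly supported in $(-1,1)$; beyond this, the argument is a quantitatively streamlined version of the half-wave proof, the crucial simplification being that $\hbar^{-1}h^2$ is uniformly small on the considered range of $\hbar$, erasing the sharp time obstruction entirely.
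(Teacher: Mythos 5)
There is a genuine gap in the proposal, stemming from an inverted inequality that invalidates the central step. In Dispersive Regime II the spectral window is $|D_y|\lesssim b_0 h^{-1}$, which in the second-microlocal variable reads $\hbar^{-1}\lesssim b_0 h^{-1}$, i.e.\ $\hbar \gtrsim h/b_0$, so $\hbar h^{-1}\geq 1/b_0 > 1$. You wrote ``$\hbar h^{-1}\leq b_0$ is bounded,'' which is the opposite inequality, and the mistake is fatal: the cutoff scale $\epsilon = \max(h^{1-\sigma}, 5c_1^{-1}\hbar h^{-1})$ is now $\gtrsim 5 c_1^{-1}/b_0 > 1$, so $\chi_1(x)=\chi(x/\epsilon)$ is not a small-scale localizer near $x=0$ but essentially the constant function $1$ on $(-1,1)$. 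More fundamentally, Proposition~\ref{elliptique} is stated (and proved) only under the hypothesis $\frac{e_0 h^2}{4}<\hbar<b_0 h$, i.e.\ for $|D_y|\gg h^{-1}$, where the data is genuinely concentrated in the classically allowed region $|x|\lesssim \hbar/h$. In Dispersive Regime II the opposite holds and the spectral data are not concentrated near $x=0$, so there is no elliptic estimate to invoke. As a result the bounds $|\mathrm{II}|\leq O_N(h^N)$ and the absorption of the $\chi_1$ cutoff into $\dot{H}_G^1$ both fail, and the commutator identity with the scaling field $x\partial_x+2y\partial_y$ no longer closes.

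The paper takes a genuinely different route here precisely to sidestep this obstruction. In Lemma~\ref{vertical-propagation} the commutant is $\varphi_T(t)\chi(y)\,y\partial_y$ alone, with no $x\partial_x$ and no $\chi_1(x)$ cutoff; the commutator~\eqref{commutator1*} then produces $2\varphi_T\chi(y)(x\partial_y)^2$ on the left, so only $\|x\partial_y u\|$ appears, not the full $\dot{H}_G^1$ norm. The missing control of $\|\partial_y u\|$ (which one cannot get pointwise in $x$ near $x=0$) is recovered through Lemma~\ref{propagation}, a semiclassical defect-measure propagation argument exploiting that the $(x,\xi)$-dynamics sweeps through the region $r_0<|x|<1$ in uniformly bounded time. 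That gives $\|\partial_y u(0)\|_{L^2}^2 \leq C_T\int_{-T}^T\|\partial_y u\|_{L^2(r_0<|x|<1)}^2\,dt$, and on that set $|\partial_y u|\leq r_0^{-1}|x\partial_y u|$. Finally the extra factor $h^{\epsilon}$ in the error term comes not from a dyadic almost-orthogonality sum as you suggest, but from the pointwise spectral inequality $\|u_h(0)\|_{L^2}\leq C h^{\epsilon}\|h\partial_y u_h(0)\|_{L^2}$ on the support of $(1-\psi_0(h^{\epsilon}D_y))$, applied after~\eqref{6.2-1}. You would need to rebuild your argument around a horizontal propagation input of this type; the scaling-commutator-plus-elliptic-regularity machinery simply does not transport to this frequency window.
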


The proof of this theorem will be decomposed into several lemmas. 

\subsection{Control of vertical propagation}
We will use the positive commutator method to control the vertical propagation. The goal is to control the norm $\|\partial_yu\|_{L^2}$. In the half-wave regime, this norm can be controlled by $\|x\partial_yu\|_{L^2}$and $\|\partial_xu\|_{L^2}$ by spending some explicit classical time, thanks to the hypoellipticity. Away from half-wave regime, we can control $\|\partial_yu\|_{L^2}$ directly by $\|x\partial_yu\|_{L^2}$
, exploiting the horizontal propagation in semi-classical time, which is small in the classical time scale. 
\begin{lem}[Horizontal propagation]\label{propagation}
	Given $0<b_0<\frac{1}{10}$ and $0<r_0<1$, there exists $h_0>0$, such that the following is true for all $0<h<h_0$:  If $(w_h)_{h>0}$ is a family of solutions of semi-classical equations
	$$ ih\partial_sw_h+h^2\Delta_Gw_h=0, \quad w_h|_{\partial\Omega}=0
	$$
	with spectral-localized property:
	$$ w_h=\psi_1\left(-h^2\Delta_G\right)\chi_0(b_0^{-1}hD_y)w_h,
	$$
	where $\psi_1\in C_c^{\infty}(\R)$ and $\chi_0\in C_c^{\infty}(\R)$ are two cutoff functions with supports $[-2,-\frac{1}{2}]\cup [\frac{1}{2},2]$ and $[-2,2]$, with respectively. 
	Then there exists a uniform constants $\kappa>0$,  independent of $b_0$ and $h$   such that for all $T_0>\kappa r_0$, we have
	\begin{equation}\label{propagation-semi}
	\|w_h(0)\|_{L^2(\Omega)}^2\leq C_{T_0}\int_0^{T_0}\|w_h(t)\|_{L^2(r_0<|x|<1)}^2dt.
	\end{equation} 
\end{lem}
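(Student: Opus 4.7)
The plan is to adapt the semiclassical defect measure strategy used in the proof of Proposition~\ref{Regime:GCC}. Arguing by contradiction, if~\eqref{propagation-semi} fails for every $C_{T_0}$, then we extract sequences $h_k \to 0$ and solutions $w_k$ with the stated spectral localization satisfying $\|w_k(0)\|_{L^2(\Omega)} = 1$, yet $\int_0^{T_0} \|w_k(s)\|_{L^2(r_0 < |x| < 1)}^2\,ds \to 0$. After an additional $s$-frequency cutoff $\chi(h_k D_s)$, with $\chi \equiv 1$ on $\mathrm{supp}(\psi_1)$ (harmless on solutions, since $ih\partial_s w_k = -h^2\Delta_G w_k$), we extract along a subsequence a semiclassical defect measure $\mu$ supported on the characteristic set $\{\sigma + \xi^2 + x^2\eta^2 = 0\}$, exactly as in the cited proof.

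The two spectral cutoffs $\psi_1(-h^2\Delta_G)$ and $\chi_0(b_0^{-1}hD_y)$ then force
\begin{equation*}
\mathrm{supp}(\mu) \subset \bigl\{(s,\sigma,x,\xi,y,\eta) : \tfrac{1}{2} \leq \xi^2 + x^2\eta^2 \leq 2,\ |\eta| \leq 3 b_0\bigr\}.
\end{equation*}
Since $|x|\leq 1$ and $b_0 < 1/10$, this implies $\xi^2 \geq \tfrac{1}{2} - 9 b_0^2 \geq \tfrac{41}{100}$, so $|\xi|$ is bounded below on $\mathrm{supp}(\mu)$ uniformly in $b_0$. In particular $\partial_\xi p = 2\xi$ never vanishes at boundary points of $\mathrm{supp}(\mu)$, hence every such point is hyperbolic (transverse reflection, no glancing or diffraction). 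By the propagation theorem of~\cite{B04} used in the previous section, $\mu$ is invariant under the generalized bicharacteristic flow of $p$, which in the interior reads
\begin{equation*}
\dot s = 1,\quad \dot\sigma = \dot\eta = 0,\quad \dot x = 2\xi,\quad \dot\xi = -2 x\eta^2,\quad \dot y = 2 x^2\eta,
\end{equation*}
completed at $x = \pm 1$ by the usual reflection $\xi \mapsto -\xi$.

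Geometric control is now immediate. Along any trajectory, $E := \xi^2 + x^2\eta^2$ and $\eta$ are conserved, so $\xi^2 \geq E - \eta^2 \geq \tfrac{1}{2} - 9 b_0^2 > 0$ throughout. Thus $\xi$ keeps constant sign between successive reflections, $x(s)$ is strictly monotonic between reflections, and $|\dot x| = 2|\xi| > 1$. Hence any trajectory starting at time $s_0$ with $|x(s_0)| \leq r_0$ enters $\{r_0 < |x| < 1\}$ within forward time at most $2 r_0$. Taking $\kappa = 2$ (or any $\kappa > (1/2 - 9 b_0^2)^{-1/2}$, which is uniformly bounded for $b_0 \in (0, 1/10)$), every point $(s_0, \rho_0)$ with $s_0 \in (\kappa r_0, T_0)$ and $|x(\rho_0)| \leq r_0$ has a backward trajectory hitting the open set $(0, T_0) \times \{r_0 < |x| < 1\}$, on which $\mu \equiv 0$ by the vanishing hypothesis. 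Invariance then forces $\mu \equiv 0$ on $(\kappa r_0, T_0) \times \Omega$. This contradicts mass conservation $\|w_k(s)\|_{L^2(\Omega)} \equiv 1$, which via the defect measure definition gives $\langle \mu, \varphi(s)\rangle = \int_\R \varphi(s)\,ds$ for every $\varphi \in C_c^\infty(\R)$, nonzero on $(\kappa r_0, T_0)$ since $T_0 > \kappa r_0$.

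The only nontrivial step is the boundary invariance of $\mu$ under the reflected flow; this is exactly where the verification that every point of $\partial X \cap \mathrm{supp}(\mu)$ is hyperbolic pays off, since once that is established the boundary propagation result of~\cite{B04} applies directly. The uniformity of $\kappa$ in $b_0 \in (0, 1/10)$ follows from the explicit lower bound on $|\xi|$, and the condition $b_0 < 1/10$ (as opposed to merely $b_0 < 1$) is exactly what buys enough horizontal speed to make $\kappa$ universal.
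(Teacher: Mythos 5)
Your proof is correct and follows exactly the paper's strategy: contradiction argument, time-frequency cutoff, semiclassical defect measure, and the observation that the spectral localization $\psi_1(-h^2\Delta_G)\chi_0(b_0^{-1}hD_y)$ forces $|\xi|$ to be bounded below on the support of $\mu$, so that the $x$-component of the bicharacteristics moves at uniform speed. The paper states this more tersely (noting that the $(x,\xi)$ trajectory is an ellipse with $\xi^2_0 \in (1/4 - 1/100, 4)$ and that the geometric control condition holds); your additional remarks about hyperbolicity of boundary points and the explicit value of $\kappa$ are useful filled-in details rather than a different route.
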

\begin{proof}
This is again  a consequence of propagation arguments. Indeed, as in Section~\ref{sec.5} we can insert a cutoff $\chi(- hD_s)$, $\chi=1$ on the support of $\psi$, $\chi$ supported in $(\frac 1 4 , 4)$ and arguing by contradiction and defining defect measures, we get a contradiction if we can ensure that 
	for all $(t_0 =0, -\sigma_0\in \text{supp}(\chi), x_0 \in [-1,1], y_0 \in \T^1, \xi_0, \eta_0 \in [ 0,b_0])$ such that $- \sigma_0 - \xi_0 ^2- x_0^2 \eta_0^2$, 
	the point on the { bicharacteristic} i.e. the solution of the system
	\begin{equation}\label{flow}
	\dot{s} = - \sigma _0, \dot{x} = 2 \xi, \dot{\xi} = -2x \eta_0, \dot{y} = 2x^2 \eta_0
	\end{equation}
	reaches the region 
	$\{ |x|\in (r_0, 1)\}$ at a time $s\in (0, T_0)$. Notice that 
	$$ \xi_0^2= - \sigma_0 - x_0^2 \eta_0 \in (- \frac 1 4 - \frac 1 {100}, 4+ \frac 1 {100}), $$ and consequently  the $(x,\xi)$ integration of~\eqref{flow} gives again an ellipse and the geometric assumption is satisfied.
	 This completes the proof of Lemma  \ref{propagation}.

\end{proof}
Denote by $u_h(t)=\underline{\Pi}_{h,b_0}^{\epsilon}e^{it\Delta_G}u_0$, we have the following 
\begin{lem}\label{vertical-propagation}
	Let $T>0$, there exists $C_T>0$, and $h_0>0$,  such that for all $0<h<h_0$, we have
	\begin{multline}\label{verticle}
	\|x\partial_yu_h(t)\|_{L^2((-T,T);L^2(\Omega))}^2+	\|u_{h}(0)\|_{H_y^1(\Omega)}^2\\
	\leq C_T\int_{-3T}^{3T}\|\phi(y)x\partial_yu_h(t)\|_{L^2(\Omega)}^2dt+C_T\|u_h(0)\|_{L^{2}(\Omega)}^2.
	\end{multline}
\end{lem}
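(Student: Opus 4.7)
The plan is to combine a positive-commutator identity with multiplier $\chi_2(y) y \partial_y$ (producing $\|x\partial_y u_h\|^2$ on the uncontrolled region) with the horizontal propagation Lemma~\ref{propagation} applied to $\partial_y u_h$ to upgrade this into control of $\|\partial_y u_h(0)\|_{L^2}^2$. I focus on the single-band case $\omega^c=(-1,1)_x\times(-a,a)$; the multi-band case is handled exactly as in Proposition~\ref{Half-wavelemma1}, recentering the commutator identity on each band.

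\textbf{Step 1 (Positive commutator).} A direct computation gives $[\Delta_G, y\partial_y] = 2x^2\partial_y^2$. Choose $\chi_2\in C_c^\infty((-\pi,\pi))$ with $\chi_2\equiv 1$ on $(-a,a)$ and $\supp(\chi_2')\cup\supp(\chi_2'')\subset\{\phi\equiv 1\}$, and a time cutoff $\varphi_T\in C_c^\infty(\R)$ equal to $1$ on $[-T,T]$ and supported in $[-3T, 3T]$. From $([i\partial_t+\Delta_G,\,\varphi_T(t)\chi_2(y)y\partial_y]\, u_h, u_h)_{L^2(\R\times\Omega)}=0$, integrating by parts in $y$ gives a principal term $-2\int \varphi_T\|x\chi_2^{1/2}\partial_y u_h\|_{L^2(\Omega)}^2\,dt$; the error terms all carry a factor of $\chi_2'$ or $\chi_2''$, are supported in $\{\phi\equiv 1\}$, and (using $|x|\leq 1$) are bounded by $C\int_{-3T}^{3T}(\|\phi x\partial_y u_h\|^2 + \|\phi u_h\|^2)\,dt$. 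The time boundary contribution is
\[
\Bigl|i\int_\R \varphi_T'(t)(\chi_2 y\partial_y u_h, u_h)_{L^2(\Omega)}\,dt\Bigr| \leq C_T \|\partial_y u_h(0)\|_{L^2}\|u_h(0)\|_{L^2},
\]
using that $\|u_h(t)\|_{L^2}$ and $\|\partial_y u_h(t)\|_{L^2}$ are both conserved (the latter since $\partial_y$ commutes with $\Delta_G$). Combining, and absorbing the tail $\{|y|\geq a\}$ into the $\phi$ observation, I obtain
\begin{equation}\label{pl.step1}
\int_{-T}^T \|x\partial_y u_h\|_{L^2(\Omega)}^2\,dt \leq C_T\|\partial_y u_h(0)\|_{L^2}\|u_h(0)\|_{L^2} + C_T\int_{-3T}^{3T}\|\phi x\partial_y u_h\|^2\,dt + C_T\|u_h(0)\|_{L^2}^2.
\end{equation}

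\textbf{Step 2 (Horizontal propagation applied to $\partial_y u_h$).} Since $\partial_y$ commutes with $\Delta_G$ and with every factor defining $\underline{\Pi}_{h,b_0}^{\epsilon}$, the function $w_h:=\partial_y u_h$ solves $i\partial_t w_h+\Delta_G w_h=0$ and satisfies the spectral localization of Lemma~\ref{propagation}. Rescaling to semiclassical time $s=t/h$, that lemma yields $\|w_h(t_0)\|_{L^2}^2 \leq \tfrac{C}{h}\int_{t_0}^{t_0+hT_0}\|w_h(t)\|_{L^2(r_0<|x|<1)}^2\,dt$ for any $t_0\in\R$, with $T_0>\kappa r_0$ fixed. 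Integrating in $t_0$ over $[-T,T-hT_0]$ and applying Fubini compensates exactly the factor $h^{-1}$, and for $h$ so small that $hT_0<T$ I obtain
\begin{equation}\label{pl.step2}
\|\partial_y u_h(0)\|_{L^2}^2 \leq C_T\int_{-T}^{T}\|\partial_y u_h\|_{L^2(r_0<|x|<1)}^2\,dt \leq \frac{C_T}{r_0^2}\int_{-T}^T \|x\partial_y u_h\|_{L^2(\Omega)}^2\,dt.
\end{equation}

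\textbf{Step 3 (Absorption and conclusion).} Applying Young's inequality $\|\partial_y u_h(0)\|\|u_h(0)\|\leq \eta\|\partial_y u_h(0)\|^2+\eta^{-1}\|u_h(0)\|^2$ in \eqref{pl.step1} and substituting \eqref{pl.step2}, I choose $\eta$ so small (relative to $r_0^2/C_T$) that the $\int_{-T}^T\|x\partial_y u_h\|^2\,dt$ term can be absorbed into the left side, yielding $\int_{-T}^T \|x\partial_y u_h\|^2\,dt \leq C_T\int_{-3T}^{3T}\|\phi x\partial_y u_h\|^2\,dt + C_T\|u_h(0)\|_{L^2}^2$. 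Using this with \eqref{pl.step2} bounds $\|\partial_y u_h(0)\|_{L^2}^2$ by the same right-hand side, and adding $\|u_h(0)\|_{L^2}^2$ one obtains the full $H_y^1$ bound at $t=0$, completing the proof of \eqref{verticle}. The main obstacle is the transition between semiclassical and classical time scales in Step 2: the $h^{-1}$ loss from inverting $t=hs$ in Lemma~\ref{propagation} is cancelled exactly by the $hT_0$ length of the semiclassical interval after the Fubini averaging, so that the resulting constant is $h$-independent, which is precisely what makes the absorption in Step 3 legitimate.
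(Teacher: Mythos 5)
Your proof is correct and follows essentially the same route as the paper's: a positive-commutator identity with multiplier $\varphi_T(t)\chi(y)y\partial_y$, the horizontal propagation Lemma~\ref{propagation} applied to $w_h=\partial_y u_h$ (which commutes with $\Delta_G$ and preserves the spectral localization and Dirichlet condition), then absorption via Young's inequality. Your explicit averaging in $t_0$ in Step~2, showing the $h^{-1}$ from rescaling cancels against the length $hT_0$ of the semiclassical window, fills in a detail the paper leaves implicit behind ``exploiting again the conservation of $L^2$ norm.''
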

\begin{proof}
From minor modification as in the proof of Proposition \ref{Half-wavelemma1}, it suffices to deal with the case where $\phi(y)=0$ on $(-a,a)$. Denote by
	$ \varphi_T(t):=\varphi(T^{-1}t)
	$ the time cutoff. 
	Take $\chi\in C_c^{\infty}(\mathbb{T}_y)$ such that $\chi|_{\mathbb{T}_y\setminus \omega}\equiv 1$ and supp$(\chi')\subset \textrm{ supp }(\phi)\subset \omega$. 
	By direct calculation, one verifies that
	\begin{equation}\label{commutator1*}
	\begin{split}
	[i\partial_t+\Delta_G,\varphi_T(t)\chi(y)y\partial_y]=&
	2\varphi_T(t)\chi(y)(x\partial_y)^2+i\varphi_T'(t)\chi(y)y\partial_y \\
	+&x^2(y\chi''(y)+2\chi'(y))\varphi_T(t)\partial_y+2\varphi_T(t)\chi'(y)y(x\partial_y)^2.
	\end{split}
	\end{equation}
	Taking the inner product of $[i\partial_t+\Delta_G, \varphi_T(t)\chi(y)y\partial_y]u_h$ and $u_h$, using the equation, we have
	\begin{equation}\label{1}
	\begin{split}
	&2\int_{\R\times M} \varphi_T(t)\|\chi(x)x\partial_y u_{h}(t,x,y)\|_{L^2(\Omega)}^2dt \\
	\leq & a\int_{\R\times M}|\varphi_T'(t)|\|u_{h}(t)\|_{L^2(\Omega)}\|\partial_yu_{h}(t)\|_{L^2(\Omega)}dt\\
	+&C\int_\R \varphi_T(t)\|\phi(y)x\partial_yu_{h}(t)\|_{L^2(\Omega)}^2+C\int_{\R}\varphi_T(t)\|u_{h}(t)\|_{L^2(\R\times \Omega)}^2.
	\end{split}
	\end{equation}
	Next, we change the time scale by setting 
	$ w_{h}(s,x,y):=\partial_yu_{h}(t,x,y), t=sh.
	$
	Note that $w_{h}$ satisfies the semi-classical equation
	$ (ih\partial_s+h^2\Delta_G)w_{h}=0.
	$
	Applying Lemma \ref{propagation}, we obtain that for any $0<r_0<1$, $T_0>\kappa r_0$,
	$$ \|w_{h}(0)\|_{L^2(\Omega)}^2\leq C_{T_0}\int_0^{T_0}\|w_{h}(s)\|_{L^2(r_0<|x|<1)}^2ds.
	$$
	Back to the time variable $t$ and the original function $u_{h}$, exploiting again the conservation of $L^2$ norm, we obtain that for any $T>0$, 
	$$ \|\partial_yu_{h}(0)\|_{L^2(\Omega)}^2\leq C_T\int_{-T}^T\|\partial_yu_{h}(t)\|_{L^2(r_0<|x|<1)}^2dt.
	$$
	Note that
	$$ \|\partial_yu_{h}(t)\|_{L^2(r_0<|x|<1)}^2\leq \frac{C_T}{r_0^2}\int_{\R}\varphi_T(t)\|x\partial_yu_{h}(t)\|_{L^2(\Omega)}^2dt.
	$$
	Plugging into \eqref{1}, and using the conservation of the norms $\|\partial_yu_{h}(t)\|_{L^2(\Omega)}^2, \|u_{h}(t)\|_{L^2(\Omega)}^2$ as well as the Young's inequality\footnote{We use $2AB\leq \epsilon A^2+\epsilon^{-1}B^2$. }, we have
	\begin{equation*}
	\begin{split}
	\|\partial_yu_{h}(0)\|_{L^2(\Omega)}^2\leq & C\int_{-2T}^{2T}\|\phi(y)x\partial_yu_{h}(t)\|_{L^2(\Omega)}^2+C_T\|u_{h}(0)\|_{L^2(\Omega)}^2.
	\end{split}
	\end{equation*}
	Replacing $T$ to $3T/2$ in the definition of $\varphi_T$, we obtain from \eqref{1} that:
	\begin{equation}\label{partial-energy}
	\begin{split}
	\|x\partial_yu_h(t)\|_{L^2((-T,T)\times \Omega)}^2\leq C\int_{-3T}^{3T}\|\phi(y)x\partial_yu_h(t)\|_{L^2(\Omega)}^2+C_T\|u_h(0)\|_{L^2(\Omega)}^2,
	\end{split}
	\end{equation}
	This completes the proof of Lemma \ref{vertical-propagation}.
\end{proof}

\subsection{\texorpdfstring{$L^2$}{L2} observability for the rapid propagation regime}
\begin{proof}[Proof of Proposition \ref{ob:Regime2}]
	For $u_h(t)=\underline{\Pi}_{h,b_0}^{\epsilon}e^{it\Delta_G}u_0$, without loss of generality, we may assume that  $$u_h=(1-\psi_0(h^{\epsilon}D_y)) )\chi_0(b_0^{-1}hD_y)u_h.$$We write  $$\phi(y)xh\partial_yu_h=xh\partial_y\chi_0((2b_0)^{-1}hD_y) (\phi(y)u_h )+O_{L^2(\Omega)}(h). $$
	Hence from Lemma \ref{vertical-propagation}, we obtain that
	\begin{equation}\label{6.2-1}
	\|h\partial_yu_h(0)\|_{L^2(\Omega)}^2\leq C_T\int_{-3T}^{3T}\|h\partial_y\phi(y)u_h(t)\|_{L^2(\Omega)}^2dt+C_Th^2\|u_h(0)\|_{L^2(\Omega)}^2.
	\end{equation}
	By assumption, $\mathcal{F}_yu_h(\cdot, k)=0$ for all $|k|\leq Ch^{-\epsilon}$, thus we have that
	\begin{equation*}
	\begin{split}
	2C_Th^2\|u_h(0)\|_{L^2(\Omega)}^2\leq &2C_T(Ch)^{2\epsilon}\|h\partial_yu_h(0)\|_{L^2(\Omega)}^2\\ \leq &C_Th^{2\epsilon}\int_{-3T}^{3T}\|h\partial_y\phi(y)u_h(t)\|_{L^2(\Omega)}^2dt+C_Th^{2+2\epsilon}\|u_h(0)\|_{L^2(\Omega)}^2.
	\end{split}
	\end{equation*}
	Absorbing the second term on the right hand side to the left, we obtain that
	\begin{align}\label{partialy}
\|\partial_yu_h(0)\|_{L^2(\Omega)}^2\leq C_T\int_{-3T}^{3T}\|\partial_y\phi(y)u_h(t)\|_{L^2(\Omega)}^2.
 \end{align}
	To get the $L^2$ estimate, we write the Littlewood-Paley decomposition 
	$$ u_h=\sum_{j: h^{-\epsilon}\leq 2^j\leq b_0h^{-1}}\psi(2^{-j}D_y)u_h.
	$$
	Note that we may replace $u_h$ by $\psi(2^{-j}D_y)u_h$ in \eqref{partialy} with $h^{-\epsilon}\leq 2^j\leq \frac{b_0}{h}$. From the commutator bound
	$$ \|[\phi(y),\psi(2^{-j}D_y)]\|_{L^2\rightarrow L^2}\leq C2^{-j},
	$$ 
	we have
	$$ \|\psi(2^{-j}D_y)u_h(0)\|_{L^2(\Omega)}^2\leq C_T\int_{-3T}^{3T}\|\psi(2^{-j}D_y)(\phi(y)u_h(t))\|_{L^2(\Omega)}^2+2^{-2j}C_T\|u_h(0)\|_{L^2(\Omega)}^2.
	$$
	Summing the inequality above for $j\in[\log_2(h^{-\epsilon}),\log_2(b_0h^{-1})]$, we complete the proof of Proposition \ref{ob:Regime2}, provided that $h<h_0$ is small enough. 
\end{proof}


\section{Non-semiclassical dispersive regime II: \texorpdfstring{$  |D_y|\leq h^{-\epsilon}$ {}}: bottom vertical frequencies}		

Recall that $\psi\in C_c^{\infty}(\R)$ is supported on $[-2,-\frac{1}{2}]\cup[\frac{1}{2},2] $ and  $\psi_0\in C_{c}^{\infty}(\R)$ is supported on $[-1,1]$. Let
$$ u_h=\psi(h^2\Delta_G)\psi_0(h^{\epsilon}D_y)u.
$$		
In this section, we prove the following result:
\begin{prop}\label{Observation-Lowfrequency}
	For any $T>0$, there exist $C_T>0$, $h_0>0$, such that for sufficiently small $\epsilon>0$ and all $0<h<h_0$, we have
	\begin{equation*}
	\|u_h(0,\cdot)\|_{L^2(\Omega)}^2\leq C_T\int_{-T}^{T}\|\phi(y)e^{it\Delta_G}u_h(0,\cdot)\|_{L^2(\Omega)}^2dt+C_Th^{1-4\epsilon}\|u(0,\cdot)\|_{L^2(\Omega)}^2.
	\end{equation*}
\end{prop}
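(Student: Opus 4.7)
Following the outline given in Section~2 and the strategy of Burq--Zworski~\cite{BZ4}, I would construct a normal form: a pseudodifferential conjugation $U_h = e^{iA_h}$ which replaces $x^2$ in front of $D_y^2$ by a constant $M^2$ on the spectral band $\{\sqrt{-\Delta_G}\sim h^{-1},\ |D_y|\leq h^{-\epsilon}\}$, modulo a remainder of relative size $O(h^{1-4\epsilon})$. The key heuristic is that, in this regime, the perturbation $x^2 D_y^2$ is of size $h^{-2\epsilon}$, which is much smaller than the principal part $D_x^2 \sim h^{-2}$; this opens the way for an averaging/cohomological argument.

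\textbf{Step 1 (Cohomological equation).} Search for $A_h$ of the factorised form $A_h = B(x,D_x)\,D_y^2$, so that $A_h$ commutes with $D_y$ and with $x^2$. Requiring $i[-\Delta_G, A_h] = (M^2 - x^2)D_y^2$ at the principal symbol level reduces to the one-dimensional transport equation $2\xi\,\partial_x B(x,\xi) = M^2 - x^2$, whose natural solution is
\[
 B(x,\xi) = \frac{M^2 x - x^3/3}{2\xi}.
\]
The choice $M^2 = 1/3$ forces $B(x,\xi)$ to vanish at $x=\pm 1$, hence $A_h$ will be compatible with the Dirichlet boundary condition. On the spectral band, the size $|\xi|\sim h^{-1}$, $|D_y|\lesssim h^{-\epsilon}$ together with $|B|\lesssim 1/|\xi|$ give $\|A_h\|_{L^2\to L^2}=O(h^{1-2\epsilon})$.

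\textbf{Step 2 (Normal form).} Set $U_h := e^{iA_h}$, which is bounded and close to the identity since $\|A_h\|\to 0$. The first- and second-order terms in the Heisenberg expansion give
\[
 U_h(-\Delta_G)U_h^* = -\partial_x^2 + \tfrac{1}{3}D_y^2 + R_h,
\]
where $R_h$ gathers: (i) the subprincipal correction to $i[-\Delta_G, A_h]$, which after multiplication by $D_y^2$ contributes $O_{L^2\to L^2}(h^{2-4\epsilon})$ on the spectral band; and (ii) the quadratic term $\tfrac{1}{2}[A_h,[A_h,-\Delta_G]]$ of even smaller size $O(h^{2-6\epsilon})$. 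Rescaling by the natural scale $-\Delta_G\sim h^{-2}$, the operator $R_h$ represents an $O(h^{1-4\epsilon})$ time-integrated perturbation of the model flow.

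\textbf{Step 3 (Observability for the model).} Setting $v_h(t) = U_h u_h(t)$, the function $v_h$ satisfies
\[
 i\partial_t v_h + \partial_x^2 v_h + \tfrac{1}{3}\partial_y^2 v_h = -R_h\, U_h^* v_h.
\]
The model operator $\partial_x^2 + \tfrac{1}{3}\partial_y^2$ on $(-1,1)_x\times \T_y$ with Dirichlet boundary separates variables: eigenfunctions are $\sin(k\pi(x+1)/2)e^{iny}$. Expanding in the Dirichlet basis in $x$ and using the orthogonality of $\{\sin(k\pi(x+1)/2)\}$ reduces the observability from $(-1,1)\times\operatorname{supp}(\phi)$ to a family, indexed by $k$, of one-dimensional observability inequalities on $\T_y$ for $i\partial_t + \tfrac{1}{3}\partial_y^2$. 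These hold in arbitrarily short time by the classical theorem of Haraux (equivalently, Ingham's inequality). Summing in $k$ gives the observability for the model in any time $T>0$.

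\textbf{Step 4 (Transfer back and conclude).} A Duhamel formula for $v_h$, combined with the observability for the model and the bound $\|R_h U_h^* v_h\|_{L^2} = O(h^{1-4\epsilon})\|u_h(0)\|_{L^2}$, yields the observability inequality for $v_h$ with an additional error $C_T h^{1-4\epsilon}\|u_h(0)\|^2_{L^2}$. Since $\|U_h - I\| = O(h^{1-2\epsilon})$, both $\|v_h\|_{L^2}$ and $\|\phi(y) v_h(t)\|_{L^2}$ agree with their $u_h$ analogues up to the same kind of error, which is absorbed into the stated remainder. Using $\|u_h(0)\|_{L^2}\leq \|u(0)\|_{L^2}$ then gives the proposition.

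\textbf{Main obstacle.} The delicate point is to make the construction of $A_h = B(x,D_x)D_y^2$ compatible with the Dirichlet boundary condition at $x=\pm 1$, so that $U_h$ preserves $D(\Delta_G)$ up to negligible boundary-layer contributions; this is exactly what forces the choice $M^2 = 1/3$. A second technical point is the careful bookkeeping of the different scales ($h$ for the full spectrum and $h^{-\epsilon}$ for $D_y$) through the commutator expansion, in order to obtain precisely the exponent $h^{1-4\epsilon}$ appearing in the statement.
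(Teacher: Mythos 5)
Your idea is essentially the right one -- a normal form conjugation eliminating $x^2-M$ in front of $D_y^2$, reducing to the constant coefficient model $\partial_x^2+\tfrac13\partial_y^2$, exactly as in the paper. And your observation that the unique choice $M=\tfrac13$ (the mean of $x^2$ on $(-1,1)$) is simultaneously the one that makes the integrated cohomological primitive $B$ vanish at $x=\pm1$ is correct and pertinent. But there is a genuine gap in the way you handle the Dirichlet boundary, and it is precisely the one the paper's authors flag and avoid.

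The operator you need, $A_h=B(x,D_x)D_y^2$ (or, in the paper's first-order version, $Q=b(x)(hD_x)^{-1}\psi_2(hD_x)$), involves a Fourier multiplier $(hD_x)^{-1}\psi_2(hD_x)$ in the $x$-variable. On the interval $(-1,1)$ with Dirichlet condition, $D_x$ is not a self-adjoint operator, so this multiplier has no canonical meaning; whatever quantization one fixes, it is nonlocal in $x$, and the image of a function vanishing at $x=\pm1$ will in general no longer vanish there. The vanishing of the \emph{symbol} $B(x,\xi)$ at $x=\pm1$ is a necessary hint but does not by itself imply that $e^{iA_h}$, or even $1+hQD_y^2$, maps $D(\Delta_G)$ into itself, and without that the commutator identity $i[-\Delta_G,A_h]=(M-x^2)D_y^2+\cdots$ picks up uncontrolled boundary terms. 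You acknowledge this as ``the main obstacle'' and wave it away, but it is exactly where the argument would break. The paper resolves it by first performing an odd reflection across $x=1$ onto the torus $\widetilde{\T}=[-1,3]/\sim$ (Lemmas \ref{embedding1} and \ref{commutation}, Subsection 7.1), replacing $\Delta_G$ by the periodic operator $P_a=\partial_x^2+a(x)^2\partial_y^2$. On the torus, $(hD_x)^{-1}\psi_2(hD_x)$ is a bona fide Fourier multiplier, the cohomological primitive $b(x)=\tfrac1{2i}\int_{-1}^x(a(z)^2-M)dz$ is a well-defined $W^{2,\infty}$ periodic function because $a^2-M$ has zero mean, and there is no boundary. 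Also, the paper uses the linear conjugation $v=(1+hQD_y^2)u$ rather than the exponential $e^{iA_h}$; this is simpler and already gives the required $O(h^{1-4\epsilon})$ remainder without tracking iterated commutators, and more importantly the additional Lemma \ref{changing-localization} is needed to trade $\psi(h^2P_a)$ for $\psi_1(hD_x)$ before the conjugation even makes sense, a step you omitted. Your Step 3 (separating variables and invoking Ingham/Haraux) is a perfectly good alternative to quoting the 2D torus observability theorem, but only once one is in a setting where the $x$-modes genuinely decouple; with the unresolved Dirichlet issue that setting is not established.

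In short: you identified the correct normal form and the correct value of $M$, but the proof requires the preliminary periodic extension (or an equivalent device) to make the $x$-Fourier multiplier and the conjugation rigorous; the symbol-vanishing-at-the-boundary observation does not substitute for it.
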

Inspired by \cite{BZ4}, we use normal form method. The key point is to search for a microlocal transformation
$$ v=(1+hQD_y^2)u
$$
for some suitable semi-classical pseudo-differential operator $Q=q(x,hD_x)$, such that the conjugated equation (satisfed by $v$) is
$$ i\partial_tv+\partial_x^2v+M\partial_y^2v=\textrm{ errors },
$$  
where 
$$M = \frac 1 2 \int_{-1}^1 x^2dx$$
is the mean value of $x^2$. 
Then we will be able to use the following theorem:
\begin{thm}[\cite{Ja},\cite{BZ4},\cite{AM}]\label{ob:Schrodinger}
	Let $\Delta_M=\partial_x^2+M\partial_y^2$. Then for any non-empty open set $\omega_0\subset\T^2$ and $T>0$, the observability
	\begin{equation}\label{eq:obSchrodinger}
	\|f\|_{L^2(\T^2)}^2\leq C_T\int_{-T}^T\|e^{it\Delta_M}f\|_{L^2(\omega_0)}^2dt
	\end{equation}
	holds true for any $f\in L^2(\T^2)$.
\end{thm}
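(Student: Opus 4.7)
The plan is to prove the observability estimate for $\Delta_M = \partial_x^2 + M\partial_y^2$ on $\T^2$ by Fourier series analysis combined with an Ingham--Jaffard type inequality in the time variable. Since $M>0$, the change of variable $x \mapsto \sqrt{M}\, x'$ (possibly passing to a rescaled torus) allows us to assume we are dealing with the Laplacian on a rectangular torus; by a partition of unity in $(x,y)$ it further suffices to prove \eqref{eq:obSchrodinger} when $\omega_0 = I_1 \times I_2$ is a product of open arcs.

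Expanding $f = \sum_{(m,n) \in \Z^2} c_{m,n}\, e^{i(mx+ny)}$, the solution reads
\begin{equation*}
e^{it\Delta_M} f = \sum_{(m,n) \in \Z^2} c_{m,n}\, e^{-it(m^2 + M n^2)}\, e^{i(mx+ny)},
\end{equation*}
and \eqref{eq:obSchrodinger} becomes
\begin{equation*}
\sum_{m,n} |c_{m,n}|^2 \leq C_T \int_{-T}^{T} \!\!\int_{I_1 \times I_2} \Big| \sum_{m,n} c_{m,n}\, e^{-it(m^2+Mn^2)}\, e^{i(mx+ny)} \Big|^2\, dx\,dy\,dt.
\end{equation*}
First I would project onto the $(x,y)$-Fourier modes using a biorthogonal family adapted to the arcs $I_1, I_2$ (Ingham-type biorthogonals in each direction): this reduces the problem to a lower bound
$\sum_{m,n}|c_{m,n}|^2 \leq C_T \int_{-T}^T \bigl|\sum_{m,n} c_{m,n}\, e^{-it(m^2+Mn^2)}\, \gamma_{m,n}\bigr|^2 dt$
for suitable nonvanishing weights $\gamma_{m,n}$. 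This is a one-dimensional nonharmonic Fourier series problem with frequencies $\{m^2+Mn^2\}$.

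The core step is then an Ingham--Jaffard inequality: if the set $\Lambda = \{m^2+Mn^2\}_{(m,n)\in\Z^2}$ has an appropriate separation/density property, then for $T$ large enough (Ingham), and hence for any $T>0$ by a semigroup bootstrap exploiting the reversibility of $e^{it\Delta_M}$, such a lower bound holds. For $M$ rational this is Haraux's argument (rearrange frequencies into arithmetic progressions and apply Parseval on $\R/\Z$); for $M$ irrational one invokes the Kahane/Beurling extension of Ingham together with the counting estimate $\#\{(m,n) : m^2+Mn^2 \leq R\} = O(R)$, which ensures the needed upper density of $\Lambda$.

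The main obstacle, and the reason the argument is non-trivial compared with standard Hilbert uniqueness, is precisely the analysis of clusters of frequencies $m^2+Mn^2$ that accumulate: degenerate resonances (e.g.\ $m^2+Mn^2 = m'^2 + Mn'^2$ along hyperbolas for rational $M$, or near-resonances for irrational $M$) must be grouped into ``packets,'' and one must bound the biorthogonal family for each packet uniformly. This is the content of Jaffard's two-dimensional nonharmonic Fourier inequality \cite{Ja}. The fallback, should one prefer a phase-space approach, is the semiclassical measure method of \cite{BZ4,AM}: argue by contradiction with a sequence $f_n$ violating observability, extract a defect measure on $[-T,T] \times T^*\T^2$ which vanishes over $\omega_0$, use invariance under the flow of $\xi^2+M\eta^2$ to propagate this vanishing along irrational directions, and handle closed geodesic directions by a two-microlocal decomposition in the transverse momentum, reaching a contradiction with $\|f_n\|=1$.
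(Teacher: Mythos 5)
The paper does not actually prove this result; it is invoked as a black box, attributed to~\cite{Ja},~\cite{BZ4},~\cite{AM}, so there is no in-paper argument to compare against. Judged on its own terms, your Fourier-side route has two concrete problems, and only the ``fallback'' you mention at the end is actually a correct strategy.

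First, the reduction in step~3 fails at the start: for any proper arc $I_1\subsetneq\T$ the family $\{e^{imx}\}_{m\in\Z}$ restricted to $L^2(I_1)$ is \emph{overcomplete} (the Beurling density of $\Z$, namely $1$, exceeds the critical density $|I_1|/(2\pi)$ of the interval), hence it is not minimal and the biorthogonal family you want to pair against does not exist. You cannot decouple the spatial Fourier analysis from the time integral in this way; the whole point of Jaffard's argument (and Kahane's before it) is to treat a spatial variable and $t$ \emph{jointly} via a two-variable nonharmonic Fourier inequality, precisely because neither the restriction to $I_1\times I_2$ nor the time average alone gives enough orthogonality. Second, the ``semigroup bootstrap exploiting the reversibility of $e^{it\Delta_M}$'' to pass from large $T$ to arbitrary $T>0$ does not exist. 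Observability on $(0,T_1)$ trivially implies it on $(0,T_2)$ for $T_2\geq T_1$, but there is no mechanism going the other way. Since the frequencies $\{m^2+Mn^2\}$ have positive Beurling upper density (about $\pi/\sqrt{M}$ points per unit length), an Ingham/Beurling-type one-dimensional argument alone would only produce observability above some positive time threshold; the fact that the theorem holds for \emph{every} $T>0$ is exactly the nontrivial content, and it requires the two-variable structure, or the dispersive/semiclassical analysis, not a density count followed by a bootstrap. The clustering/packet remark you make is pointing at the real difficulty, but you do not resolve it. Your final paragraph sketching the semiclassical defect-measure route (extract a measure, propagate, second-microlocalize in the transverse momentum along closed directions) does describe the mechanism of~\cite{BZ4} and~\cite{AM}, but at the level of a table of contents: the actual work there is the construction of a hierarchy of two-microlocal measures and a propagation/observability statement for the resulting one-dimensional reduced Schr\"odinger operators, none of which is addressed in the sketch.
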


However,  dealing with Dirichlet boundary value problem induces difficulties and consequently,  we prefered to extend the analysis to the periodic setting.

\subsection{Periodic extension }
Let us introduce several notations. Denote by
$$ \widetilde{\mathbb{T}}:=[-1,3]\slash\{-1,3\} \textrm{ and } \widetilde{\mathbb{T}}^2:=\widetilde{\mathbb{T}}_x\times \mathbb{T}_y, \Omega_{*}=(-1,3)_x\times\mathbb{T}_y.
$$
Define
$$ a(x)=x, \textrm{ if }|x|\leq 1 \textrm{ and } a(x)=2-x, \textrm{ if } 1\leq x\leq 3,
$$
and the operator
$$ P_a:=\partial_x^2+a(x)^2\partial_y^2.
$$ 
Note that $a(x)$ and $a^2(x)$ are Lipschitz functions on $\widetilde{\mathbb{T}}$.
Denote by 
$$ H_a^k(\widetilde{\mathbb{T}}^2):=\{f\in \mathcal{D}'(\widetilde{\mathbb{T}}^2): P_a^jf\in L^2(\widetilde{\mathbb{T}}^2),\forall 0\leq j\leq k \}
$$
the associated function spaces and the domain of $P_a$ is $D(P_a)=H_{a}^2(\widetilde{\mathbb{T}}^2).$ 
Note that $D(\Delta_G)=H_{G,0}^1(\Omega)\cap H_G^2(\Omega)$.
Consider the extension map:
$$ \iota_1: f\mapsto \widetilde{f}, 
$$
with
$$ \widetilde{f}(x,y)=f(x,y),\textrm{ if } |x|\leq 1,\textrm{ and } \widetilde{f}(x,y)=-f(2-x,y),\textrm{ if }1\leq x\leq 3.
$$
The mapping $\iota_1$ is the odd extension with respect to $x=1$. Note that for $f\in C^{\infty}(\ov{\Omega} )$, we have
$$ \partial_xf|_{x=1-}=\partial_x(\iota_1f)|_{x=1+}.
$$
\begin{lem}\label{embedding1}
	The extension map $\iota_1: D(\Delta_G)\rightarrow D(P_a)$ is continuous. Moreover, for all $f\in D(\Delta_G)$, $\|\iota_1f\|_{L^2(\widetilde{\T}^2)}=\sqrt{2}\|f\|_{L^2(\Omega)}$.
\end{lem}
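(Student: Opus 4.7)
The plan is to establish both assertions by a direct change of variables combined with a distributional identity. First, for the $L^2$ identity, I would split $\|\iota_1 f\|_{L^2(\widetilde{\T}^2)}^2$ into the contributions from $\{|x|\leq 1\}$ and $\{1\leq x\leq 3\}$; in the second piece, substitute $u=2-x$ and use $|\iota_1 f(x,y)|^2=|f(2-x,y)|^2$, which yields $\|\iota_1 f\|_{L^2(\widetilde{\T}^2)}^2 = 2\|f\|_{L^2(\Omega)}^2$.

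Next, for the continuity from $D(\Delta_G)$ to $D(P_a)$, the main task is to establish the distributional identity $P_a(\iota_1 f)=\iota_1(\Delta_G f)$ on $\widetilde{\T}^2$, which in particular ensures that no singular delta contributions appear at the two interfaces $\{x=1\}$ and $\{x=-1\sim 3\}$. To this end, I would pair against $\phi\in C^\infty(\widetilde{\T}^2)$ and write $\langle \iota_1 f, P_a\phi\rangle$ as the sum of integrals over $\{|x|<1\}$ and $\{1<x<3\}$; on the second piece, change variables $u=2-x$ and note that, setting $\tilde\phi(u,y):=\phi(2-u,y)$, the chain rule gives $\partial_u^2\tilde\phi(u,y)=(\partial_x^2\phi)(2-u,y)$ and $a(2-u)^2(\partial_y^2\phi)(2-u,y)=u^2\partial_y^2\tilde\phi(u,y)$ for $u\in(-1,1)$ (using $a(2-u)=u$ there). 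This reduces the pairing to $\int_\Omega f\,(\Delta_G\phi-\Delta_G\tilde\phi)\,dx dy$.

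The next step is integration by parts in $\Omega$. The $x^2\partial_y^2$ term produces no contribution since $\T$ has no boundary, while the $\partial_x^2$ terms produce boundary integrals at $x=\pm 1$: the $f\,\partial_x\phi$ traces vanish by the Dirichlet condition $f(\pm 1,y)=0$, leaving only contributions of the form $\int_\T(\partial_x f)(\pm 1,y)\,\phi(\pm 1,y)\,dy$ and the analogous ones with $\tilde\phi$. The crucial cancellation is that $\tilde\phi(1,y)=\phi(1,y)$ trivially, while $\tilde\phi(-1,y)=\phi(3,y)=\phi(-1,y)$ by the identification $-1\sim 3$ in $\widetilde{\T}$. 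Hence these boundary terms cancel exactly upon subtraction, and the pairing becomes $\int_\Omega\Delta_G f\cdot(\phi-\tilde\phi)\,dx dy$. Reversing the change of variables on the $\tilde\phi$-piece identifies this expression with $\langle \iota_1(\Delta_G f),\phi\rangle_{\widetilde{\T}^2}$.

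The main subtlety, and essentially the whole content of the lemma, is this cancellation of interface boundary terms, which requires \emph{simultaneously} the Dirichlet condition $f(\pm 1,y)=0$ (killing the $f\,\partial_x\phi$ traces) and the periodic gluing $\phi(-1,y)=\phi(3,y)$ on $\widetilde{\T}$ (matching the surviving $\partial_x f\cdot\phi$ traces on the two sides of the interfaces). Once $P_a(\iota_1 f)=\iota_1(\Delta_G f)$ is established in $\mathcal{D}'(\widetilde{\T}^2)$, the identity $\|P_a(\iota_1 f)\|_{L^2(\widetilde{\T}^2)}=\sqrt{2}\,\|\Delta_G f\|_{L^2(\Omega)}$, obtained by the same change of variables as in step one, gives the continuity of the graph-norm map $\iota_1:D(\Delta_G)\to D(P_a)$.
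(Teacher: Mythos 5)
Your proof is correct and takes essentially the same route as the paper's: odd extension, duality against a test function on $\widetilde{\T}^2$, two integrations by parts, and cancellation of the interface terms using the Dirichlet condition together with the periodic gluing of the test function. The one genuine (and worthwhile) refinement is that you isolate the clean intertwining identity $P_a(\iota_1 f)=\iota_1(\Delta_G f)$ in $\mathcal{D}'(\widetilde{\T}^2)$, from which the exact graph-norm identity $\|P_a\iota_1 f\|_{L^2(\widetilde{\T}^2)}=\sqrt{2}\,\|\Delta_G f\|_{L^2(\Omega)}$ follows; the paper only records the slightly cruder bound $\|P_a\iota_1 u\|_{L^2(\widetilde{\T}^2)}\le 2\|\Delta_G u\|_{L^2(\Omega)}$ (and uses the Dirichlet condition implicitly, via the assertion that $\iota_1 u\in H^1$, whereas you make its role in killing the $f\,\partial_x\phi$ traces explicit). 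Both give continuity of $\iota_1:D(\Delta_G)\to D(P_a)$, but your formulation of the intertwining is exactly what is needed downstream in Lemma~\ref{commutation} and \eqref{commutative}, so it is arguably the more transparent presentation.
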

\begin{proof}
	Assume that $u\in D(\Delta_G)$, we extend it to $\widetilde{u}=\iota_1u$. Denote by $\Omega'=(1,3)_x\times\T_y$. By definition, $(P_a\widetilde{u})|_{\Omega}\in L^2(\Omega)  $ and  $(P_a\widetilde{u})|_{\Omega'}\in L^2(\Omega')$. To check that $P_a\widetilde{u}\in L^2(\widetilde{T}^2)$, we have to be careful near $x=1$, where we glue $\Omega$ and $\Omega'$. Take a test function $F\in C^{\infty}(\widetilde{\T}^2)$, we calculate
	\begin{equation*}
	\begin{split}
	-\langle P_a\widetilde{u}, F\rangle=&\int_{\Omega\cup \Omega'}(\partial_x\widetilde{u}\partial_x F+a(x)^2\partial_y\widetilde{u}\partial_y F  )dxdy\\
	=&\int_{\Omega\cup \Omega'}(-\partial_x^2\widetilde{u}\cdot F-a(x)^2\partial_y^2u\widetilde{u} F)dxdy+\int_{x=1-}\partial_x\widetilde{u} F dy-\int_{x=-1+}\partial_x\widetilde{u} Fdy\\
	+& \int_{x=3-}\partial_x\widetilde{u} Fdy-\int_{x=1+}\partial_x\widetilde{u}Fdy. 
	\end{split}
	\end{equation*}
	From the definition of $\widetilde{u}$, we have $\partial_x\widetilde{u}|_{x=1+}=\partial_xu|_{x=1-}$, and $\partial_x\widetilde{u}|_{x=3-}=\partial_x\widetilde{u}_{x=-1+}$. Thus all the boundary terms vanish. This implies that
	$$ \|P_a\iota_1u\|_{L^2(\widetilde{\T}^2)}\leq \|\Delta_Gu\|_{L^2(\Omega)}+\|(P_a\widetilde{u})_{\Omega'}\|_{L^2(\Omega')}= 2\|\Delta_Gu\|_{L^2(\Omega)}.
	$$
	The last assertion $\|\iota_1f\|_{L^2(\widetilde{\T}^2)}=\sqrt{2}\|f\|_{L^2(\Omega)}$ is obvious. This completes the proof of Lemma~\ref{embedding1}.
\end{proof}

\begin{lem}\label{commutation}
	Let $S_1,S_2$ be two self-ajoint operators on Banach spaces $E_1, E_2$ with domains $D(S_1), D(S_2)$, with respectively. Assume that $j: D(S_1)\rightarrow D(S_2)$ is a continuous embedding. Suppose that there holds $j\circ S_1=S_2\circ j,$ then for any Schwartz function $g\in \mathcal{S}(\R)$, we have
	$$ j\circ g(S_1)=g(S_2)\circ j
	$$   	
\end{lem}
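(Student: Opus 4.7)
The plan is to reduce the functional calculus identity to the commutation of resolvents $(S_i - z)^{-1}$, which follows immediately from the intertwining hypothesis, and then to recover $g(S_i)$ via an explicit integral representation.

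First I would establish the resolvent intertwining. For $z \in \CC \setminus \RR$, both $S_i - z$ are invertible by self-adjointness. Given $u \in E_1$, set $v := (S_1 - z)^{-1} u \in D(S_1)$, so $(S_1 - z)v = u$. Applying $j$ and invoking the hypothesis $j \circ S_1 = S_2 \circ j$ on $D(S_1)$ gives $(S_2 - z)(jv) = ju$, and since $jv \in D(S_2)$ this yields
$$ j \circ (S_1 - z)^{-1} = (S_2 - z)^{-1} \circ j, \qquad z \in \CC \setminus \RR,
$$
as operators $E_1 \to E_2$. (In the paper's application, $j = \iota_1$ is already defined and continuous on all of $L^2(\Omega)$, so no further extension is required.)

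Next I would pass to a Schwartz function $g$ using the Stone/Fourier representation $g(S_i) = (2\pi)^{-1/2} \int_{\RR} \hat g(t)\, e^{itS_i}\, dt$, which converges strongly on $E_i$ since $\hat g \in \cS(\RR)$. It is then enough to show $j \circ e^{itS_1} = e^{itS_2} \circ j$. For $u \in D(S_1)$, the curve $t \mapsto e^{itS_1}u$ remains in $D(S_1)$ (standard consequence of self-adjointness), so $t \mapsto j(e^{itS_1}u)$ is $C^1$ into $E_2$ with derivative $i\, j(S_1 e^{itS_1}u) = iS_2\, j(e^{itS_1}u)$ by the intertwining. The other curve $t \mapsto e^{itS_2}(ju)$ satisfies the same linear Cauchy problem $w'(t) = iS_2 w(t)$ with the same initial datum $ju$, so by Stone's uniqueness the two curves coincide on $D(S_1)$; density of $D(S_1)$ in $E_1$ extends the identity to all of $E_1$. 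Inserting into the Fourier representation and pulling the continuous map $j$ under the strongly convergent integral concludes that $j \circ g(S_1) = g(S_2) \circ j$.

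The main (and rather mild) obstacle is purely bookkeeping: verifying that $e^{itS_1}$ preserves $D(S_1)$, that $j$ is defined and continuous on each object to which it is applied, and that $j$ commutes with the strongly convergent Fourier integral. An equivalent route that avoids unitary groups altogether is the Helffer-Sj\"ostrand formula
$$ g(S_i) = -\frac{1}{\pi} \int_{\CC} \bar\partial \tilde g(z) \, (S_i - z)^{-1} \, dL(z),
$$
where $\tilde g$ is an almost analytic extension of $g$; Step~1 then yields the claim immediately under the integral sign, without any appeal to unitary groups.
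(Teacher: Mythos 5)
Your proposal is correct and, in its second branch, coincides with the paper's proof: the authors likewise invoke the Helffer--Sj\"ostrand formula to reduce the functional calculus identity to the resolvent intertwining, and establish that intertwining by exactly the computation you give in your Step~1 (write $u_1 = (z-S_1)^{-1}f_1$, apply $j$, use $j\circ S_1 = S_2\circ j$). Where you differ is in promoting the unitary-group/Fourier representation to the \emph{primary} route and only offering Helffer--Sj\"ostrand as an alternative, whereas the paper goes directly through Helffer--Sj\"ostrand and writes a two-line proof.

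A short comparison. The Fourier route you lead with is more concrete and elementary (no almost-analytic extensions), but it is not actually free: to justify differentiating $t\mapsto j(e^{itS_1}u)$ you need the difference quotients $h^{-1}(e^{i(t+h)S_1}-e^{itS_1})u$ to converge \emph{in the graph norm} of $D(S_1)$ (since $j$ is only given as a continuous map on $D(S_1)$), which requires $u\in D(S_1^2)$ rather than merely $u\in D(S_1)$, followed by a further density/extension step. You correctly flag these as the "bookkeeping obstacle", and the Helffer--Sj\"ostrand route sidesteps all of them because the resolvent identity is established directly on $E_1$ and the $\bar\partial$--integral converges in operator norm. That is presumably why the paper adopts it. Both routes are sound; the Helffer--Sj\"ostrand one is simply cleaner given that $j$ is only assumed to act on domains. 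One last point you implicitly noticed and the paper glosses over: the resolvent computation $j\circ(z-S_1)^{-1} = (z-S_2)^{-1}\circ j$ requires $j(f_1)$ to make sense for general $f_1\in E_1$, i.e.\ $j$ must extend beyond $D(S_1)$; in the application $j=\iota_1$ is defined and bounded on all of $L^2(\Omega)$, so this is harmless, but your remark to that effect is a worthwhile clarification of the statement's slightly loose hypotheses.
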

\begin{proof}
	Thanks to Helffer-Sj\"ostrand's formula, it suffices to check the validity for resolvent, namely
	\begin{equation}\label{resolvent-validity}
	j\circ (z-S_1)^{-1}=(z-S_2)^{-1}\circ j.
	\end{equation}
	For $f_1\in E_1$, we denote by $u_1=(z-S_1)^{-1}f_1\in D(S_1)$, and $u_2=j(u_1)\in E_2$. We directly check that
	$$ (z-S_2)u_2=(z-S_2)\circ j(u_1)=j((z-S_1)u_1 )=j(f_1).
	$$
	This completes the proof of Lemma \ref{commutation}.
\end{proof}
Lemma \ref{commutation}  ensures the preservation of the spectral localization property by odd extension procedure. Another interesting fact is that the extended eigenfunctions are still smooth. Indeed, since $\partial_y$ commutes with $\Delta_G$ as well as $P_a$, the eigenfunctions of $P_a$ can be taken of the form $\widetilde{\varphi}_n(x)e^{iny}$, where $\widetilde{\varphi}_n(x)$ is an eigenfunction of $\widetilde{L}_n:=-\partial_x^2+a(x)^2n^2$ with domain $D(\widetilde{L}_n)=H^2(\widetilde{\T})$.  Moreover, the extension $\iota_1$ can be viewed as a continuous map from $D(L_n)\rightarrow D(\widetilde{L}_n)$, where $L_n=-\partial_x^2+n^2x^2$ is defined on its domain $D(L_n)=H_0^1((-1,1))\cap H^2((-1,1))$. Recall that $(\varphi_{m,n})_{m\in\N}$ is a sequence of eigenfunctions of $L_n$ with associated eigenvalues $(\lambda_{m,n}^2)_{m\in\N}$. Therefore, the extension  $\widetilde{\varphi}_{m,n}:=\iota_1(\varphi_{m,n})$ is eigenfunction of $\widetilde{L}_n$ with the same eigenvalue $\lambda_{m,n}^2$. Note that $\widetilde{\varphi}_{m,n}$ is $C^{\infty}$ for $x\in (-1,1)\cup (1,3)$. From $L_n\varphi_{m,n}=\lambda_{m,n}^{2}\varphi_{m,n}$, we have $\varphi_{m,n}''(x)=(x^2n^2-\lambda_{m,n}^2)\varphi_{m,n}$. By induction, we have that $\varphi_{m,n}^{(2k)}|_{x=\pm 1}=0$, for all $k\in\mathbb{N}$. Since $\widetilde{\varphi}_{m,n}$ is odd with respect to $x=1$, we deduce that $\widetilde{\varphi}_{m,n}^{(2k+1)}|_{x=1-}=\widetilde{\varphi}_{m,n}^{(2k+1)}|_{x=1+}$, $\widetilde{\varphi}_{m,n}^{(2k+1)}|_{x=3-}=\widetilde{\varphi}_{m,n}^{(2k+1)}|_{x=-1+}$. Therefore, $\widetilde{\varphi}_{m,n}\in C^{\infty}(\widetilde{\T})$. Moreover, the orthogonality condition holds
$$ (\widetilde{\varphi}_{m,n},\widetilde{\varphi}_{m',n} )_{L^2(\widetilde{\T})}=C_n\delta_{m,m'}.
$$
Now we extend $(\widetilde{\varphi}_{m,n})_{m\in\N}$ to a orthonormal eigenbasis of $\widetilde{L}_n$. This new basis will be denoted by $(e_{m,n})_{m\in\N}$. Let $f_n(x)=\sum_{m\in\N}c_{m,n}\varphi_{m,n}(x)$, then for any Schwartz function $g:\R\rightarrow \C$, we have 
\begin{equation}\label{extended-localization}
(\iota_1\circ g(h^2L_n)f_n)(x)=\sum_{m\in\N}c_{m,n}g(h^2\lambda_{m,n}^2)\iota_1(\varphi_{m,n})(x).
\end{equation}
Using \eqref{extended-localization}, since the coefficient corresponding to the new added eigenfunctions $e_{m,n}$ is zero, we deduce that for any Schwartz function $g:\R\rightarrow \C$,
\begin{equation}\label{commutative}
\iota_1\circ g(h^2L_n)=g(h^2\widetilde{L}_n)\circ\iota_1,\textrm{ and } \iota_1\circ g(h^2\Delta_G)=g(h^2P_a)\circ\iota_1. 
\end{equation}
Consequently, we have the following lemma, reducing the proof of Proposition \ref{Observation-Lowfrequency} to the observability of the extended solutions:
\begin{lem}\label{observability:extension}
	Let $T>0$, assume that for any $0<h<h_0\ll 1, 0<\epsilon\ll 1$, the following observability holds true for all  $\widetilde{u}_0\in L^2(\widetilde{\T}^2)$:
	\begin{multline}\label{ob:extension}
	\|\psi(h^2P_a)\psi_0(h^{\epsilon}D_y)\widetilde{u}_0\|_{L^2(\widetilde{\T}^2)}^2\\
	\leq C_T\int_{-T}^T\|\phi(y)\psi(h^2P_a)\psi_0(h^{\epsilon}D_y)\widetilde{u}(t)\|_{L^2(\widetilde{\T}^2)}^2dt+C_Th\|\widetilde{u}_0\|_{L^2(\widetilde{\T}^2)}^2,
	\end{multline}
	where $\widetilde{u}(t)=e^{itP_a}\widetilde{u}_0$, the solution of Schr\"odinger equation $i\partial_t\widetilde{u}+P_a\widetilde{u}=0$ with initial data $\widetilde{u}|_{t=0}=\widetilde{u}_0$. Then Proposition \ref{Observation-Lowfrequency} is true. More precisely, with the same constant $C_T>0$, for all $0<h<h_0$, $0<\epsilon\ll 1$, the observability
	\begin{multline}\label{ob:finalregime}
	\|\psi(h^2\Delta_G)\psi_0(h^{\epsilon}D_y)u_0\|_{L^2(\Omega)}^2\\
	\leq C_T\int_{-T}^T\|\phi(y)\psi(h^2\Delta_G)\psi_0(h^{\epsilon}D_y)e^{it\Delta_G}u_0 \|_{L^2(\Omega)}^2dt+C_Th\|u_0\|_{L^2(\Omega)}^2
	\end{multline}
	holds true for all $u_0\in L^2(\Omega)$.
\end{lem}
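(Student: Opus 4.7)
The plan is to deduce the observability \eqref{ob:finalregime} on $\Omega$ from the assumed observability \eqref{ob:extension} on $\widetilde{\T}^2$ essentially by lifting $u_0$ to its odd extension $\iota_1 u_0$, applying \eqref{ob:extension}, and unfolding. Given $u_0\in L^2(\Omega)$, set $\widetilde{u}_0 := \iota_1 u_0 \in L^2(\widetilde{\T}^2)$ and $\widetilde{u}(t) := e^{itP_a}\widetilde{u}_0$. Lemma~\ref{embedding1} gives the isometry
$$\|\widetilde{u}_0\|_{L^2(\widetilde{\T}^2)}^2 = 2\|u_0\|_{L^2(\Omega)}^2,$$
and the same identity with $u_0$ replaced by any function obtained by applying to $u_0$ a Fourier multiplier in $y$ or a spectral function of $\Delta_G$ (since the pointwise absolute-value symmetry around $x=1$ is preserved).

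The key intertwining property I would establish is that the extension commutes with the Schr\"odinger flow:
$$\iota_1\circ e^{it\Delta_G} = e^{itP_a}\circ \iota_1 \quad \text{on } L^2(\Omega).$$
Expanding $u_0 = \sum_{m,n}a_{m,n}\varphi_{m,n}(x)e^{iny}$, the construction in the paragraph preceding \eqref{commutative} shows that $\iota_1\varphi_{m,n}=\widetilde{\varphi}_{m,n}\in C^\infty(\widetilde{\T})$ is an eigenfunction of $\widetilde{L}_n$ with the same eigenvalue $\lambda_{m,n}^2$, so $\widetilde{\varphi}_{m,n}(x)e^{iny}$ is an eigenfunction of $P_a$ with eigenvalue $-\lambda_{m,n}^2$. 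Computing both sides in this basis gives the intertwining. Combining this with \eqref{commutative} and with the obvious fact that $\iota_1$ commutes with the Fourier multipliers $\psi_0(h^\epsilon D_y)$ and with multiplication by $\phi(y)$ (since both act only in $y$), one obtains the crucial identity
$$\phi(y)\,\psi(h^2 P_a)\,\psi_0(h^\epsilon D_y)\,\widetilde{u}(t) \;=\; \iota_1\!\left[\phi(y)\,\psi(h^2\Delta_G)\,\psi_0(h^\epsilon D_y)\,e^{it\Delta_G}u_0\right],$$
and the analogous identity at $t=0$ without the $\phi(y)$ factor.

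Applying Lemma~\ref{embedding1} to each side of these identities shows that every norm appearing in hypothesis \eqref{ob:extension} applied to $\widetilde{u}_0$ equals $\sqrt{2}$ times the corresponding $L^2(\Omega)$ norm in \eqref{ob:finalregime}. Thus \eqref{ob:extension} applied to $\widetilde{u}_0$ reads, after squaring and dividing by $2$, precisely as the desired inequality \eqref{ob:finalregime} (with the same constant $C_T$ and remainder $C_T h$).

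The only point requiring genuine care is the intertwining $\iota_1\circ e^{it\Delta_G}=e^{itP_a}\circ\iota_1$, because Lemma~\ref{commutation} is stated for Schwartz functions while $x\mapsto e^{itx}$ is not Schwartz. This is the one step that is not purely formal, but it is bypassed by working directly with the eigenfunction expansion as above; alternatively one can approximate $e^{itx}$ by Schwartz cut-offs $\chi_R(x)e^{itx}$ ($\chi_R\to 1$) and pass to the limit using the uniform $L^2$ bound of $\iota_1$ guaranteed by Lemma~\ref{embedding1}. All the remaining manipulations are bookkeeping with Fourier multipliers in $y$ and the spectral functional calculus already recorded in \eqref{commutative}.
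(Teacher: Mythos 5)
Your proof is correct and follows the same route as the paper: lift $u_0$ to $\widetilde{u}_0=\iota_1 u_0$, carry the spectral and Fourier multipliers through $\iota_1$ via the commutation identity \eqref{commutative}, apply the assumed observability on $\widetilde{\T}^2$, and divide by the isometry factor $2$ from Lemma~\ref{embedding1}. The one detour you flag — justifying $\iota_1\circ e^{it\Delta_G}=e^{itP_a}\circ\iota_1$ despite $r\mapsto e^{itr}$ not being Schwartz — is unnecessary: the paper simply applies \eqref{commutative} with $g(r)=e^{itr}\psi(h^2r)$, which \emph{is} Schwartz thanks to the compactly supported factor $\psi$, yielding exactly the composite $\psi(h^2P_a)e^{itP_a}\circ\iota_1=\iota_1\circ\psi(h^2\Delta_G)e^{it\Delta_G}$ needed in the estimate.
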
 

\begin{proof}
	With a little abuse of notation, we assume that $u_0=\psi(h^2\Delta_G)\psi_0(h^{\epsilon}D_y)u_0\in D(\Delta_G)$. Denote by $\widetilde{u}_0=\iota_1u_0$. Thanks to Lemma \ref{embedding1}, $\widetilde{u}_0=\psi(h^2P_a)\psi_0(h^{\epsilon}D_y)\widetilde{u}_0$. Using \eqref{commutative} with $g(r)=e^{itr}\psi(h^2r)$, we deduce that
	$$ \phi(y)e^{itP_a}\widetilde{u}_0=\iota_1(\phi(y)e^{it\Delta_G}u_0).
	$$	
	From the observability for $\widetilde{u}_0$, we obtain that
	$$ \|\widetilde{u}_0\|_{L^2(\widetilde{\T}^2)}^2\leq C_T\int_{-T}^T\|\phi(y)e^{itP_a}\widetilde{u}_0\|_{L^2(\widetilde{\T}^2)}^2dt,
	$$
	which implies that
	$$ \|u_0\|_{L^2(\Omega)}^2\leq C_T\int_{-T}^T\|\phi(y)e^{it\Delta_G}u_0\|_{L^2(\Omega)}^2dt,
	$$
	thanks to the fact that $\|\iota_1f\|_{L^2(\widetilde{\T}^2)}^2=2\|f\|_{L^2(\Omega)}^2.$ This completes the proof of Lemma \ref{observability:extension}.
\end{proof}

\begin{proof}[Proof of Proposition \ref{Observation-Lowfrequency}]
	From Lemma \ref{observability:extension}, it is sufficient to prove  \eqref{ob:extension}. With a little abuse of notation, we denote by $u_0\in D(P_a)$ such that $u_0=\psi(h^2P_a)\psi(h^{\epsilon}D_y)u_0$ and $u(t)=e^{itP_a}u_0$. We are now in the periodic setting. Yet, we should pay an extra attention to the fact that $P_a=\partial_x^2+a(x)^2\partial_y^2$ is a hypoelliptic operator with only Lipschitz coefficient. More precisely, $a\in \mathrm{Lip}(\widetilde{\T}^2)$ which is not $C^1$ at $x=1$.

	Before proceeding, we need a lemma which, modulo errors allows us to replace the microlocalisation $\psi(h^2P_a)\psi_0(h^{\epsilon}D_y)$ by $\psi_1(hD_x)\psi(h^{\epsilon}D_y)$.
	\begin{lem}\label{changing-localization}
		Let $\psi_1\in C_c^{\infty}(\frac{1}{4}<|\xi|<4)$ such that $\psi_1=1$ on supp$(\psi)$. Then, as bounded operator on $L^2(\widetilde{\T}^2)$, we have
		$$ \left(1-\psi_1(hD_x) \right)\psi(h^2P_a)\psi_0(h^{\epsilon}D_y)=O_{L^2\rightarrow L^2}(h^{\frac{3}{2}-\frac{5}{2}\epsilon}).
		$$ 	
	\end{lem}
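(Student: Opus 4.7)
The plan is to exploit that on the range of $\psi_0(h^\epsilon D_y)$ one has the operator bound
$$\|(hD_y)^2\psi_0(h^\epsilon D_y)\|_{L^2\to L^2}=O(h^{2-2\epsilon}),$$
so that in the identity $-h^2 P_a=(hD_x)^2+a(x)^2(hD_y)^2$ the ``potential'' $a(x)^2(hD_y)^2$ is a norm-small perturbation of $(hD_x)^2$ on this range. Consequently $\psi(h^2 P_a)$ should, modulo an $O(h^{2-2\epsilon})$ error, agree with a spectral projector of $(hD_x)^2$ alone, on the spectral support of which $\psi_1(hD_x)$ is the identity.

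I would first choose an even auxiliary cutoff $\widetilde\psi\in C_c^\infty(\R)$ with $\widetilde\psi\psi=\psi$ and with $\mathrm{supp}(\widetilde\psi)$ just slightly larger than $\mathrm{supp}(\psi)$, arranged so that $\{\xi\in\R:\xi^2\in\mathrm{supp}(\widetilde\psi)\}\subset\{\psi_1=1\}$; this is possible because $\psi_1=1$ on $\mathrm{supp}(\psi)$. The resulting Fourier-multiplier identity
$$(1-\psi_1(hD_x))\,\widetilde\psi((hD_x)^2)=0$$
is the crucial algebraic input. Since $\partial_y$ commutes with both $P_a$ and $D_x$, the cutoff $\psi_0(h^\epsilon D_y)$ commutes with every spectral projector of $P_a$, with $(hD_x)^2$, with $a(x)^2$, and with $(hD_y)^2$, so one may push it around freely; in particular $\widetilde\psi(-h^2 P_a)\psi(h^2 P_a)=\psi(h^2 P_a)$.

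The core step is the Helffer--Sj\"ostrand comparison of $\widetilde\psi(-h^2 P_a)$ and $\widetilde\psi((hD_x)^2)$ via
$$\widetilde\psi(A)=\frac{1}{\pi}\int_\C\bar\partial\widetilde\psi^{\mathrm{ext}}(z)(z-A)^{-1}\,dm(z),$$
valid for any self-adjoint $A$ with $|\bar\partial\widetilde\psi^{\mathrm{ext}}(z)|\leq C_N|\Im z|^N$. The resolvent identity
$$(z+h^2P_a)^{-1}-(z-(hD_x)^2)^{-1}=(z+h^2P_a)^{-1}\bigl[a(x)^2(hD_y)^2\bigr](z-(hD_x)^2)^{-1},$$
composed with $\psi_0(h^\epsilon D_y)$ and combined with $\|a\|_\infty<\infty$ and the multiplier bound above, has right-hand side of norm $\leq C\|a\|_\infty^2 h^{2-2\epsilon}/|\Im z|^2$. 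Integrating then gives
$$\bigl\|\bigl[\widetilde\psi(-h^2 P_a)-\widetilde\psi((hD_x)^2)\bigr]\psi_0(h^\epsilon D_y)\bigr\|_{L^2\to L^2}\leq Ch^{2-2\epsilon},$$
so combining with the algebraic identity,
$$(1-\psi_1(hD_x))\psi(h^2 P_a)\psi_0(h^\epsilon D_y)=(1-\psi_1(hD_x))\bigl[\widetilde\psi(-h^2P_a)-\widetilde\psi((hD_x)^2)\bigr]\psi_0(h^\epsilon D_y)\psi(h^2 P_a),$$
whose operator norm is $O(h^{2-2\epsilon})$. Since $2-2\epsilon>\frac{3}{2}-\frac{5}{2}\epsilon$ for every $\epsilon\in(0,1)$, this is even stronger than the claimed bound.

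I expect no serious conceptual obstacle: the only delicate point is the $|\Im z|^{-2}$ integration in the Helffer--Sj\"ostrand estimate, which is routine for almost-analytic extensions of compactly supported smooth functions. Crucially, the argument uses only self-adjointness of $P_a$ and the $L^\infty$ bound on $a$---no smoothness of $a$ whatsoever---so the merely Lipschitz regularity of $a$ at $x=1$ plays no role in this lemma, in line with the spectral-theoretic philosophy of this section that motivated passing to the periodic odd extension on $\widetilde\T^2$ in the first place.
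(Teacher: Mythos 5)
Your proof is correct but takes a genuinely different route, and in fact gives a strictly better exponent. The paper's argument Fourier-transforms in $y$, reducing the claim to a uniform bound for the one-dimensional operators $h^2\widetilde L_n=(hD_x)^2+h^2n^2a(x)^2$ with $|n|\le Ch^{-\epsilon}$; it then expands $f=\psi(h^2\widetilde L_n)f$ over the eigenbasis $e_{j,n}$, shows $\|(1-\psi_1(hD_x))e_{j,n}\|_{L^2(\widetilde\T)}=O(h^{2-2\epsilon})$ for each \emph{individual} eigenfunction (via ellipticity of $(hD_x)^2-h^2\lambda_{j,n}^2$ on $\operatorname{supp}(1-\psi_1)$), and finally sums with Cauchy--Schwarz, paying a factor $\bigl(\#\{j:\ h^2\lambda_{j,n}^2\in\operatorname{supp}\psi\}\bigr)^{1/2}$ controlled by the Weyl-type eigenvalue count; that square-root loss is precisely the source of the exponent $\tfrac32-\tfrac52\epsilon$. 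Your operator-theoretic route — second resolvent identity plus Helffer--Sj\"ostrand, exploiting that $a(x)^2(hD_y)^2\psi_0(h^\epsilon D_y)$ is $O(h^{2-2\epsilon})$ in operator norm — avoids the eigenfunction summation altogether and yields the cleaner and sharper $O(h^{2-2\epsilon})$. The only point that needs a line of justification is the 2D resolvent identity as written, since $(z-(hD_x)^2)^{-1}$ does not map into $D(P_a)$; this is repaired at once by performing the comparison mode-by-mode in $y$ (as the paper does), where $h^2\widetilde L_n$ and $(hD_x)^2$ share the domain $H^2(\widetilde\T)$ and the perturbation $h^2n^2a(x)^2$ is a bounded multiplication — which is anyway implicit in your use of the commuting cutoff $\psi_0(h^\epsilon D_y)$. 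Your observation that only $\|a\|_\infty<\infty$ matters, and not any smoothness of $a$ near $x=\pm1$, is exactly right and consistent with the paper's reason for passing to the periodic extension.
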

	\begin{proof}
		Since $D_y$ commutes with everything, it suffices to show that, uniformly in $|n|\leq Ch^{-\epsilon}$, as an operator on $L^2(\widetilde{\T})$,
		\begin{equation}\label{*}
		(1-\psi_1(hD_x) )\psi(h^2\widetilde{L}_n)=O_{L^2\rightarrow L^2}(h^{\frac{3}{2}-\frac{5}{2}\epsilon}).
		\end{equation}
		Let $f=\psi(h^2\widetilde{L}_n)f$,  $f(x)=\sum_{j}\psi(h^2\lambda_{j,n}^2)c_{j}e_{j,n}(x)$. Note that
		$$ -(h^2\partial_x^2+h^2\lambda_{j,n}^2)e_{j,n}=-h^2n^2a(x)^2e_{j,n}=O_{L^2}(h^{2-2\epsilon}).
		$$
		Observe that on the support of $(1-\psi_1(\xi))$, the semi-classical operator $-(h^2\partial_x^2+h^2\lambda_{j,n}^2)$ is elliptic, uniformly in $j,n$ such that $|n|\leq Ch^{-\epsilon}$ and $\lambda_{j,n}^2h^2\in $ supp$(\psi).$ Therefore, $\|(1-\psi_1(hD_x))e_{j,n}\|_{L^2(\widetilde{\T})}=O(h^{2-2\epsilon})$. By Cauchy-Schwartz,
		$$ \|(1-\psi_1(hD_x))f\|_{L^2(\widetilde{\T})}\leq \sup_{j}\|(1-\psi_1(hD_x))e_{j,n}\|_{L^2(\widetilde{\T})}\cdot \#\{j: \lambda_{j,n}^2\in\textrm{ supp }(\psi) \}^{1/2}\|f\|_{L^2(\widetilde{\T})}.
		$$
		From Weyl's law,  $$\#\{j: \lambda_{j,n}^2\in\textrm{supp}(\psi) \}\leq Ch^{-1-\epsilon}\leq \#\{(j,n): |n|\leq Ch^{-\epsilon}, \lambda_{j,n}^2\in\textrm{ supp }(\psi) \}\leq Ch^{-1-\epsilon}. $$ 
		Therefore, 
		$$ \|(1-\psi_1(hD_x))f \|_{L^2(\widetilde{\T})}\leq Ch^{\frac{3}{2}-\frac{5}{2}\epsilon}\|f\|_{L^2(\widetilde{\T})}.
		$$
		Applying Plancherel in $y$, we complete the proof of Lemma \ref{changing-localization}.
	\end{proof}
	Modulo an error $O_{L^2}(h^{\frac{3}{2}-\frac{5}{2}\epsilon})\|u_0\|_{L^2(\widetilde{\T}^2)}$, we may assume that $u=\psi_1(hD_x)\psi_0(h^{\epsilon}D_y)u$. Now we search for the function
	$$ v=(1+hQD_y^2)u
	$$
	with a operator $Q$ acting only in $x$, to be chosen later. Let $M=\frac{1}{4}\int_{-1}^3 a(x)^2dx= \frac 1 2 \int_{-1}^1 x^2 dx$ be the average of $a(x)^2$ along the horizontal trajectory $y=\mathrm{const.}$ Using the equation $(i\partial_t+P_a)u=0$, we have
	\begin{equation*}
	\begin{split}
	(i\partial_t-D_x^2-MD_y^2)v=&(1+hQD_y^2)(a(x)^2-M)D_y^2u-h[D_x^2,Q]D_y^2u\\
	=&(a(x)^2-M)D_y^2u-h[D_x^2,Q]D_y^2u+hQD_y^2(a(x)^2-M )D_y^2u \\
	=&(a(x)^2-M)D_y^2u-h[D_x^2,Q]D_y^2u + O_{L^2(\widetilde{\T}^2)}(h^{1-4\epsilon}),
	\end{split}
	\end{equation*}
	since $\|hQD_y^2(a(x)^2-M)D_y^2u\|_{L^2(\widetilde{\T}^2)}=O(h^{1-4\epsilon})$, due to the localization property of $u$. 
	Take $\psi_2\in C_c^{\infty}(1/8\leq |\xi|\leq 8)$, such that $\psi_2\psi_1=\psi_1$. We define the operator
	$$ Q=\frac{1}{2 i}\left(\int_{-1}^x(a(z)^2-M )dz\right)(hD_x)^{-1}\psi_2(hD_x),
	$$ 
	and denote by $b(x)=\frac{1}{2 i}\int_{-1}^x(a(z)^2-M )dz$, $m(hD_x)=(hD_x)^{-1}\psi_2(hD_x)$. Since $a(x)^2-M$ has zero average, the function $b$ is well-defined as a periodic function in the space $C^1(\widetilde{\T})\cap W^{2,\infty}(\widetilde{\T})$. From direct calculation, we have
	$$ -h[D_x^2,Q]=2ib'(x)m(hD_x)hD_x+i[hD_x,b'(x)]m(hD_x).
	$$
	Note that $[hD_x,b'(x)]=hb''(x)$, and $b''\in L^{\infty}(\widetilde{\T})$, thus
	$$ (i\partial_t+\Delta_M)v=r_h=O_{L^2(\widetilde{\T}^2)}(h^{1-4\epsilon})\|u_0\|_{L^2(\widetilde{\T}^2)},
	$$
	where $\Delta_M=\partial_x^2+M\partial_y^2$.
	Applying Theorem \ref{ob:Schrodinger}, we obtain that
	\begin{equation*}
	\begin{split}
	\|v(0,\cdot)\|_{L^2(\widetilde{\T}^2)}^2\leq &C_T\int_{-T}^T\|\phi(y)v(t,\cdot) \|_{L^2(\widetilde{\T}^2)}^2dt+C_T\int_{-T}^T\left\|\phi(y)\int_0^te^{i(t-t')\Delta_M}r_h(t')dt' \right\|_{L^2(\widetilde{\T}^2)}^2dt\\
	\leq & C_T\int_{-T}^T\|\phi(y)v(t,\cdot)\|_{L^2(\widetilde{\T}^2)}^2dt+C_Th^{2(1-4\epsilon)}\|u_0\|_{L^2(\widetilde{\T}^2)}^2.
	\end{split}
	\end{equation*}
Since $v=u+O_{L^2(\widetilde{\T}^2)}(h^{1-2\epsilon})\|u_0\|_{L^2(\widetilde{\T}^2)}$, the proof of Proposition \ref{Observation-Lowfrequency} is now complete.
\end{proof}

\section{Compactness argument and the proof of Theorem \ref{positive-bounded}}

\begin{lem}\label{embedding}
	The embeddings  $H_{G,0}^1(\Omega)\hookrightarrow L^2(\Omega)$ and	$L^2(\Omega)\hookrightarrow H_{G,0}^{-1}(\Omega)$ are compact.
\end{lem}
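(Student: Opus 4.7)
My approach is Fourier decomposition in the $y$-variable, combined with the hypoelliptic coercivity from Lemma~\ref{coercivity} and the classical one-dimensional Rellich theorem. First, I would expand any $f\in H_{G,0}^1(\Omega)$ as
$$f(x,y)=\sum_{k\in\Z} f^{(k)}(x)e^{iky},\qquad f^{(k)}\in H_0^1((-1,1)),$$
so that (up to $2\pi$ factors)
$$\|f\|_{\dot{H}_G^1}^2=\sum_k\left(\|(f^{(k)})'\|_{L^2((-1,1))}^2+k^2\|xf^{(k)}\|_{L^2((-1,1))}^2\right),$$
while Lemma~\ref{coercivity} provides the additional hypoelliptic gain
$$\sum_k |k|\,\|f^{(k)}\|_{L^2((-1,1))}^2\le C\|f\|_{H_G^1}^2.$$

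Given a bounded sequence $(f_n)\subset H_{G,0}^1(\Omega)$, the first step is: for each fixed $k$, the slice $(f_n^{(k)})_n$ is bounded in $H_0^1((-1,1))$, hence relatively compact in $L^2((-1,1))$ by one-dimensional Rellich. A diagonal extraction then yields a subsequence (still denoted $(f_n)$) such that $f_n^{(k)}\to g^{(k)}$ in $L^2((-1,1))$ for every $k\in\Z$. To upgrade this pointwise-in-$k$ convergence to strong $L^2(\Omega)$ convergence I would split low and high vertical frequencies: for any $N\ge1$,
$$\|f_n-f_m\|_{L^2(\Omega)}^2\le C\sum_{|k|\le N}\|f_n^{(k)}-f_m^{(k)}\|_{L^2}^2+\frac{C}{N}\sum_{|k|>N}|k|\,\|f_n^{(k)}-f_m^{(k)}\|_{L^2}^2.$$
The second term is bounded by $C'/N$ uniformly in $n,m$ thanks to the coercivity inequality applied to $f_n$ and $f_m$, and the first term tends to $0$ as $n,m\to\infty$ since it is a finite sum of convergent sequences. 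This shows $(f_n)$ is Cauchy in $L^2(\Omega)$, proving the first compact embedding.

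For the second embedding, $L^2(\Omega)\hookrightarrow H_{G,0}^{-1}(\Omega)$ is, by construction of $H_{G,0}^{-1}$ as the dual of $H_{G,0}^1$ with $L^2$ as pivot, the Banach adjoint of the embedding $H_{G,0}^1(\Omega)\hookrightarrow L^2(\Omega)$ just established; by Schauder's theorem, the adjoint of a compact operator between Banach spaces is again compact, so this is immediate.

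There is no real obstacle here: the only conceptual input beyond the usual Rellich argument on a product domain is the coercivity $\||D_y|^{1/2}f\|_{L^2}\lesssim\|f\|_{H_G^1}$, which is precisely the hypoelliptic gain in the $y$-direction needed to dominate the high-frequency tail. Without this gain the $y$-decay would not follow from $H_G^1$ boundedness (since $\partial_y$ is only weighted by $x^2$, which degenerates at $x=0$), so Lemma~\ref{coercivity} is the essential ingredient.
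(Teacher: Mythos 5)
Your proof is correct, and it shares the same essential ingredient as the paper's argument, namely the hypoelliptic coercivity $\||D_y|^{1/2}f\|_{L^2}\lesssim\|f\|_{H_G^1}$ coming from the commutator $[\partial_x,x\partial_y]=\partial_y$, but the way you close the argument is different. The paper splits $L^2(\Omega)$ into the zero $y$-mode and its orthogonal complement: on the zero mode the space $X_2$ is isometric to $H_0^1((-1,1))$ and one-dimensional Rellich applies directly, while on the nonzero modes the coercivity plus the trivial bound on $\|\partial_xf\|_{L^2}$ give a continuous embedding $H_{G,0}^1\hookrightarrow H^{1/2}(\Omega)$, and the paper then invokes compactness of $H^{1/2}(\Omega)\hookrightarrow L^2(\Omega)$. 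You instead apply 1D Rellich to every Fourier slice $f_n^{(k)}$, extract diagonally, and close via a low/high frequency splitting where the tail is killed by the same $|D_y|^{1/2}$ gain. Your version is more elementary in the sense that it avoids citing compactness of fractional Sobolev embeddings on the cylinder (replacing that black box by an explicit $\frac{1}{N}$ tail estimate), at the cost of being a little longer; the paper's version is shorter on the page because it outsources the final compactness step to a standard fact. Both treat the $k=0$ mode consistently: you fold it into the "low frequency" finite sum where plain Rellich on slices suffices, whereas the paper isolates it as a separate summand $X_2$. The deduction of the second embedding by Schauder is the same, and correct.
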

\begin{proof}
	By duality, we only need to prove that $H_{G,0}^1(\Omega)\hookrightarrow L^2(\Omega)$ is compact. Denote by $\Pi_0$, the projection to the zero mode of $y$. Define several closed linear subspaces:
	$$ \mathcal{H}_1:=\{f\in L^2(\Omega): \mathcal{F}_yf(\cdot,0)=0 \}, \quad \mathcal{H}_2:=\Pi_0(L^2(\Omega)),$$
	and
	$$ X_1:=\{f\in H_{G,0}^1(\Omega): \mathcal{F}_yf(\cdot,0)=0 \},\quad X_2:=\Pi_0(H_{G,0}^1(\Omega)).
	$$
	Then 
	$$ L^2(\Omega)=\mathcal{H}_1\oplus \mathcal{H}_2, \quad H_{G,0}^1(\Omega)=X_1\oplus X_2.
	$$
	Moreover, $X_j\subset\mathcal{H}_j,j=1,2$ and $\mathcal{H}_1\perp \mathcal{H}_2$, with respect to the $L^2(\Omega)$ inner product. The compact embedding will follow, if we prove that the each embedding $i_j: \mathcal{H}_j\hookrightarrow X_j$ is compact. Note that $X_2$ is isometric to $H_0^1((-1,1))$, while $\mathcal{H}_2$ is isometric to $L^2((-1,1))$. By Rellich theorem, $i_2$ is compact. 
	
	It remains to show that $i_1$ is compact. Denote again by $\Pi_+$ and $\Pi_-$ the projection to strictly positive and negative frequencies, with respectively. For $f\in X_1$, we can write
	$$ \||D_y|^{1/2}u\|_{L^2(\Omega)}^2=(-i\partial_y\Pi_+f,f )_{L^2(\Omega)}-(-i\partial_y\Pi_- f,f)_{L^2(\Omega)}^2.
	$$ 
	Using $[\partial_x,x\partial_y ]=\partial_y$, we deduce that $$(\partial_y\Pi_{\pm}f,f)_{L^2(\Omega)}\leq \|\partial_x f\|_{L^2(\Omega)}\|x\partial_yf\|_{L^2(\Omega)}\leq \|u\|_{H_{G}^1(\Omega)}^2. $$
	Since $\|\partial_xf\|_{L^2(\Omega)}$ can be controlled by $\|f\|_{H_{G}^1(\Omega)}$, we deduce that $H_{G,0}^1(\Omega)\hookrightarrow H^{1/2}(\Omega)$ is continuous. Decomposing $i_1$ as 
	$ H_{G,0}^1(\Omega)\hookrightarrow H^{1/2}(\Omega)\hookrightarrow L^2(\Omega), 
	$
	the compactness of $i_1$ is implied by the compactness of $H^{1/2}(\Omega)\hookrightarrow L^2(\Omega)$. This completes the proof of Lemma \ref{embedding}. 
\end{proof}

\begin{proof}[Proof of Theorem \ref{positive-bounded}]
	Combining with Proposition \ref{regime:Half-wave}, Proposition \ref{Regime:GCC}, Proposition \ref{ob:Regime2} and Proposition \ref{Observation-Lowfrequency}, for some $h_0>0$, for $T>a$ and all $0<h<h_0$, there holds
	$$ \|\psi(h^2\Delta_G)u_0\|_{L^2(\Omega)}^2\leq C_T\int_{-T}^{T}\|\phi(y)e^{it\Delta_G}\psi(h^2\Delta_G)u_0\|_{L^2(\Omega)}^2dt+C_Th^{1/2}\|u_0\|_{L^2(\Omega)}^2.
	$$ 
	The rest argument is now standard. We follow the approach of Bardos-Lebeau-Rauch \cite{BLR}.
	Choosing $h=2^{-j}$ and summing over the inequality above for $j\geq j_0=\log_2\left(h_0^{-1}\right)$, after standard manupulations, we obtain that
	\begin{equation}\label{ob:weak}
	\|u_0\|_{L^2(\Omega)}^2\leq C_T\int_{-T}^T\|\phi(y)e^{it\Delta_G}u_0\|_{L^2(\Omega)}^2dt+C_T\|\psi_0(c2^{-2j_0}\Delta_G)u_0\|_{L^2(\Omega)}^2,
	\end{equation}
	where $\psi_0\in C_c^{\infty}(|\xi|\leq 1)$. Note that the second term on the right side is controlled by $\|u_0\|_{H_G^{-1}(\Omega)}^2$.
	
	For $T'>0$, defining the set
	$$ \mathcal{N}_{T'}:=\left\{u_0\in L^2(\Omega): e^{it\Delta_G}u_0|_{[-T',T']\times\omega}=0 \right\}
	$$
	Take $T'\in (a,T)$,  \eqref{ob:weak} implies that any function $u_0\in \mathcal{N}_{T'}$ satisfies
	$$ \|u_0\|_{L^2(\Omega)}\leq C_T\|u_0\|_{H_G^{-1}(\Omega)}.
	$$
	Thanks to Lemma \ref{embedding}, we deduce that $\mathrm{dim}\mathcal{N}_{T'}<\infty$. Note that for any $T_1<T_2$, $\mathcal{N}_{T_2}\subset \mathcal{N}_{T_1}$. Consider the mapping  $S(\delta):=\delta^{-1}\left(e^{i\delta\Delta_G}-\mathrm{Id}\right): \mathcal{N}_{T'}\rightarrow \mathcal{N}_{T'-\delta}$. For fixed $T'\in (a,T)$, when $\delta< T'-a$, $\dim\mathcal{N}_{T'-\delta}<\infty$. Since the dimension is an integer, there exists $\delta_0>0$, such that for all $0<\delta<\delta_0$, $\mathcal{N}_{T'-\delta}=\mathcal{N}_{T'}$. Therefore, $S(\delta)$ is a linear map on $\mathcal{N}_{T'}$. Let $\delta\rightarrow 0$, we obtain that $\partial_t: \mathcal{N}_{T'}\rightarrow \mathcal{N}_{T'}$ is a well-defined linear operator. Take any eigenvalue $\lambda\in\C$ of $\partial_t$, and assume that $u_*\in\mathcal{N}_{T'}$ is a corresponding eigenfunction (if it exists). There holds
	$$ -\Delta_Gu_{*}=i\lambda u_{*}.
	$$ 
	This implies that $u_*$ is an eigenfunction of $-\Delta_G$. However, $u_*|_{\omega}\equiv 0$, hence $u_*\equiv 0$. Therefore, $\mathcal{N}_{T'}=\{0\}$.
	
	Now we choose $T_0=T'$ as above. By contradiction, assume that Theorem \eqref{positive-bounded} is untrue. Then there exists a sequence $(u_{k,0})_{k\in\N}$, such that 
	$$ \|u_{k,0}\|_{L^2(\Omega)}=1,\quad \lim_{k\rightarrow\infty}\int_{-T_0}^{T_0}\|\phi(y)e^{it\Delta_G}u_{k,0}\|_{L^2(\Omega)}^2=0.
	$$
	Up to a subsequence, we may assume that $u_{k,0}\rightharpoonup u_0$. Thus from Lemma \ref{embedding}, $u_{k,0}\rightarrow u_0$, strongly in $H_G^{-1}(\Omega)$. Passing $k\rightarrow\infty$ of \eqref{ob:weak}, $1\leq \|u_0\|_{H_G^{-1}(\Omega)}$. In particular, $u_0\neq 0$. However, since $e^{it\Delta_G}u_{k,0}\rightarrow 0$ in $L^2([-T_0,T_0]\times\omega)$. This implies that $u_0\in\mathcal{N}_{T_0}=\{0\}$. This is a contradiction. The proof of Theorem \ref{positive-bounded} is now complete. 
\end{proof}

\section{Counterexample to the observability if \texorpdfstring{$T\leq \mathcal{L}(\omega)$}{T<L(omega)}} 



The goal of this section is to prove Theorem \ref{thm-negative}. This is based on detailed asymptotic analysis for the first eigenvalue and eigenfunction of the harmonic oscillator $\mathcal{L}_w=-\partial_x^2+w^2x^2$ with respect to the large parameter $w$. Some of the estimates can be also found in \cite{HS}. We will present an self-contained ODE approach in this section.  First, we note that it is sufficient to consider the case $T<a$. Since if the interior observability holds true for $T=a$, then there exists $\delta_1>0$, such that the observability is also true for $T=a-\delta$.\footnote{This comment would not apply for a boundary observability problem. }

By changing of variable $z=\sqrt{w}x$, it is more convenient to deal with the harmonic oscillator $\mathcal{P}=-\partial_z^2+z^2$ on $L^2((-\sqrt{w},\sqrt{w}))$ with Dirichlet boundary condition.

Let us consider the $\mu$ dependent equations
\begin{equation}
\begin{cases}\label{Hermite-1}
& -f''+(z^2-\mu)f=0,\\
& f(0)=1,\quad f'(0)=0.
\end{cases}
\end{equation}
We will denote by $f_{\mu}(z)=f(\mu,z)$ the unique solution to \eqref{Hermite-1}. 

Denote by
$$ Z_0(\mu):=\inf\{z>0: f_{\mu}(z)=0 \}.
$$
$Z_0(\mu)$ is the first zero of $f(\mu,z)$. If $f_{\mu}$ does not have zero, then $Z_0(\mu)=+\infty$. By inverse function theorem and Strum-Liouville theorem, it is known that $Z_0(\mu)<1$ for all $\mu>1$, and the function $\mu\mapsto Z_0(\mu)$ is strictly decreasing for $\mu>1$. Moreover, $Z_0(1)=+\infty$.
Denote by $w\mapsto \mu(w)$ the inverse function of $Z_0(\mu)$. It is also known that the function $\mu(w)$ is smooth, strictly decreasing, and for all $w>0$,
	$$ \mu(w)>1,\quad  \lim_{w\rightarrow\infty} \mu(w)=1.
	$$	
Since $\mu(w)-1$ tends to $0$ as $w\rightarrow \infty$, it is more convenient to work with
$$ \nu(w):=\mu(w)-1
$$
and denote by $f(\nu,z)=f(\mu,z)$, the solution of \eqref{Hermite-1}, by abusing the notation.
\subsection{Lack of observability for \texorpdfstring{$T<\mathcal{L}(\omega)$}{T< L(omega)} }
Let $I=(-1,1)$, and denote by
$$ p(w,x):=\frac{f(\nu(w),\sqrt{w}x)}{\|f(\nu(w),\sqrt{w}\cdot)\|_{L^2(I)}}
$$
the first normalized eigenfunction of $\mathcal{L}_{w}:=-\frac{d^2}{dx^2}+w^2x^2$ on $L^2((-1,1))$, which is real-valued, with the least eigenvalue $\lambda(w)=w(1+\nu(w))$. For $m\in\N, y\in\R$, we define the phase function
$$ \Phi_m(t,y,w)=w y-\lambda(w)t-2\pi m w.
$$
The following estimate for the eigenvalue $\lambda(w)$, eigenfunction $p(w,x)$ and phase function $\Phi_m(t,y,w)$ is crucial for the proof of Theorem \ref{thm-negative}.
\begin{prop}\label{estimate:eigenvalue}
	There exists $w_0>0$, large enough, such that for all $w\geq w_0$, $|x|\leq \frac{1}{2}$, 
	\begin{equation*}
	\begin{split}
	(1)\quad &|\lambda(w)-w|\leq C_0w^{3/2}e^{-w}, \nu^{(k)}(w)\leq C_kw^{A_k}e^{-w}, \textrm{ for } k \geq 0;\\
	(2)\quad &|\lambda^{(k)}(w)|\leq C_kw^{A_k}e^{-w}, \textrm{ for } k\geq 2;\\ (3)\quad &\Big|\frac{\partial^k\Phi_m}{\partial w^k}(t,y,w)\Big|\leq C_0(1+|t|)w^{A_k}e^{-w}, \textrm{ for } k\geq 2, t,y\in\R, m\in\N;\\
	(4)\quad  & cw^{1/4}e^{-\frac{wx^2}{2}}\leq  |p(w,x)|\leq w^{1/4}e^{-\frac{wx^2}{2}}\textrm{ and } \Big|\frac{\partial^kp}{\partial w^k}(w,x)\Big|\leq Cw^{1/4-k}, \textrm{ for }k=1,2;\\
	(5)\quad & |p(w,x_0)|\leq e^{-\frac{w}{10}}, \textrm{ for } \frac{1}{2}<|x_0|\leq 1.
	\end{split}
	\end{equation*}
	where $A_k,C_k$ are positive constants depending on $k$ and $C_0,C,c>0$ are uniform constants.
\end{prop}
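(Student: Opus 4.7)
The proposition reduces to a careful analysis of the ODE \eqref{Hermite-1} in the regime $\mu = 1+\nu$, $\nu \to 0^+$. My first step would be the substitution $f(\nu,z) = e^{-z^2/2}\,g(\nu,z)$, which converts \eqref{Hermite-1} into Hermite's equation
$$ g'' - 2z g' + \nu g = 0,\quad g(\nu,0)=1,\ g'(\nu,0)=0. $$
At $\nu = 0$ one has $g(0,\cdot)\equiv 1$, i.e.\ the free ground state $f(0,z) = e^{-z^2/2}$ (no zero, explaining $Z_0(1) = +\infty$). The regular $\nu$-expansion reads
$$ g(\nu,z) = 1 + \nu\,\psi(z) + O(\nu^2),\qquad \psi(z) = -\int_0^z e^{r^2}\!\int_0^r e^{-s^2}\,ds\,dr, $$
and since $\psi(z) \sim -\tfrac{\sqrt{\pi}}{4}\,z^{-1}e^{z^2}$ as $z\to +\infty$, imposing the Dirichlet condition $f(\nu(w),\sqrt{w}) = 0$ yields
$$ 1 + \nu(w)\,\psi(\sqrt{w}) + O(\nu(w)^2) = 0,\quad\text{hence}\quad \nu(w) \asymp w^{1/2}e^{-w}. $$
This is the first bound of (1), and the bound on $|\lambda(w)-w| = w\nu(w)$ in (1) follows at once.

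\textbf{Derivatives and the phase.} For $\nu^{(k)}$ I would implicitly differentiate $f(\nu(w),\sqrt{w}) = 0$ in $w$. For $k=1$,
$$ \nu'(w) = -\frac{(2\sqrt{w})^{-1}\,\partial_z f(\nu(w),\sqrt{w})}{\partial_\nu f(\nu(w),\sqrt{w})}, $$
whose numerator is $O(w^{1/2}e^{-w/2})$ (recessive part) and whose denominator is $\sim w^{-1/2}e^{w/2}$ (dominant part, coming from $\psi$), so $|\nu'(w)| = O(w^{A_1}e^{-w})$. The higher derivatives are obtained by induction: each new derivative produces only polynomial-in-$w$ factors from $\partial_z^{i}\partial_\nu^{j}f$ evaluated at $(\nu(w),\sqrt{w})$, so the exponential smallness survives, giving $|\nu^{(k)}(w)| \leq C_k w^{A_k}e^{-w}$. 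Since $\lambda(w) = w(1+\nu(w))$, (2) follows. Finally (3) is immediate from the identity
$$ \partial_w^k \Phi_m(t,y,w) = -t\,\lambda^{(k)}(w),\quad k\geq 2. $$

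\textbf{Pointwise bounds on $p$.} With $\nu(w)$ exponentially small, a Volterra iteration around $g\equiv 1$ gives
$$ f(\nu(w),z) = e^{-z^2/2}\bigl(1 + O(w^{1/2}e^{-w})\bigr)\ \ \text{uniformly for } |z|\leq \sqrt{w}/2, $$
while the normalisation reads
$$ \|f(\nu(w),\sqrt{w}\,\cdot)\|_{L^2(I)}^2 = w^{-1/2}\!\int_{-\sqrt{w}}^{\sqrt{w}} f(\nu(w),z)^2\,dz = w^{-1/2}\bigl(\sqrt{\pi} + O(e^{-cw})\bigr), $$
so $\|f(\nu(w),\sqrt{w}\,\cdot)\|_{L^2(I)} \sim \pi^{1/4}w^{-1/4}$. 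These two facts deliver the two-sided Gaussian bound in (4). For $\partial_w^k p$ with $p(w,x) = f(\nu(w),\sqrt{w}x)/\|f\|$, each differentiation either hits $\nu(w)$ (exponentially negligible) or the dilation $\sqrt{w}$ (producing a factor $\sim (x/\sqrt{w})\,\partial_z f$); the ensuing $x^{2k}e^{-wx^2/2}$ has sup-norm $O(w^{-k})$ on $(-1,1)$, yielding $|\partial_w^k p| = O(w^{1/4-k})$. Item (5) is an immediate consequence of (4): for $|x|>1/2$ one has $w^{1/4}e^{-wx^2/2} \leq w^{1/4}e^{-w/8} \leq e^{-w/10}$ for $w$ large.

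\textbf{Main obstacle.} The technical core is the uniform control, up to $z=\sqrt{w}$, of the $\nu$-perturbation $g - 1$ and all its $\nu$-derivatives. The Hermite equation admits both a recessive and a dominant solution, and naive estimates on $g$ let the dominant $e^{z^2}$ growth overwhelm the $e^{-z^2/2}$ envelope; the bookkeeping must show that the dominant component appears only with a prefactor of order $\nu$ (and higher powers of $\nu$ in successive $\nu$-derivatives). This can be done either through the explicit two-solution decomposition $g = A(\nu)g_{\mathrm{rec}} + B(\nu)g_{\mathrm{dom}}$ with smooth connection coefficients $A(0)=1$, $B(0)=0$, $B'(0)\neq 0$, or by a direct Volterra iteration with explicit $\nu$-counting; once this is in hand, all of (1)--(5) fall out as above.
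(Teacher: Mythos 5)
Your plan follows the same strategy as the paper: expand the solution of \eqref{Hermite-1} in powers of $\nu=\mu-1$, compute the first-order term $\psi$ explicitly (your integral formula and the asymptotics $\psi(z) \sim -\tfrac{\sqrt\pi}{4}z^{-1}e^{z^2}$ agree with Proposition~\ref{first-order} after factoring out the Gaussian), impose the Dirichlet condition at $z=\sqrt w$ to extract $\nu(w) \asymp w^{1/2}e^{-w}$, and differentiate $F(\nu(w),w)=0$ implicitly for $\nu^{(k)}$. The normalization computation $\|f(\nu(w),\sqrt w\,\cdot)\|_{L^2}\sim w^{-1/4}$ and the identity $\partial_w^k\Phi_m = -t\,\lambda^{(k)}(w)$ for $k\geq 2$ are also correct.

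The genuine gap is the one you yourself flag: the proposal never actually controls the remainder $R_2 = \nu^{-2}(f-f_0-\nu f_1)$ together with its $\nu$- and $z$-derivatives up to $z=\sqrt w$, and this control is load-bearing for every item. Without it: (a) the $O(\nu^2)$ in the Dirichlet equation could in principle swamp the leading term $\nu\psi(\sqrt w)$, so the quantitative bound $\nu(w)\leq Aw^{1/2}e^{-w}$ is not established (the paper needs a fixed-point iteration, Proposition~\ref{fixed-point}, precisely here); (b) the implicit-differentiation bounds on $\nu^{(k)}$ are unsupported, since the quantities $\partial_z^i\partial_\nu^j f(\nu(w),\sqrt w)$ entering the induction are built out of $\partial_z^i\partial_\nu^j R_2$; and (c) item (5) is \emph{not}, as you write, ``an immediate consequence of (4)'' --- (4) is stated only for $|x|\leq 1/2$, and for $1/2<|x|\leq 1$ you need a pointwise bound on $f$ near $z=\sqrt w$, which is exactly what the remainder estimate of Corollary~\ref{bound-R2} supplies. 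The paper produces these bounds via the Laplace-contour representation of the two homogeneous solutions, Watson's Lemma~\ref{Watson}, variation of parameters, and Fa\`a di Bruno's formula for the higher derivatives; your suggested decomposition $g = A(\nu)g_{\mathrm{rec}}+B(\nu)g_{\mathrm{dom}}$ is precisely the paper's $q_1,q_2$, but until it is carried through with the uniform-in-$\nu$ derivative bookkeeping, the proposal remains a blueprint rather than a proof.
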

\begin{rem}
The content of Proposition \ref{estimate:eigenvalue} is not new. Indeed, estimates for the eigenvalues and eigenfunctions of more general semi-classical harmonic oscillator can be found in \cite{HS}. However, for our specific purpose, and to have a self contained exposition, we will present an elementary by-hand proof of the Proposition \ref{estimate:eigenvalue}, which may be of its own interest.
\end{rem}
We postpone the proof of Proposition \ref{estimate:eigenvalue} and use it to prove Theorem \ref{thm-negative}. It is sufficient to consider the case where $\omega^c$ consists of a single band. Thus a reformulation of \ref{thm-negative} is the following:
\begin{prop}\label{final-negative}
	Let $T<a$. Then there exists a sequence of solutions $(u_n)_{n\in \N}$ of equation \eqref{main-equation-bounded}, such that
	$$ \liminf_{n\rightarrow\infty}\int_{\Omega}|u_n(0,x,y)|^2dxdy>0,
	$$
	while
	\begin{equation}\label{eq:counter-ob} \lim_{n\rightarrow\infty}\int_{-T}^T\int_{(-1,1)_x\times ((a,\pi)\cup (-\pi,-a))_y}|u_n(t,x,y)|^2dxdydt=0.
	\end{equation}
\end{prop}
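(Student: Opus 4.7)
The plan is to construct explicit coherent-state solutions supported in the bottom transverse mode $m=0$ and spectrally concentrated around a large vertical frequency $n=N$. The key quantitative input is Proposition~\ref{estimate:eigenvalue}(1): the ground-state eigenvalues satisfy $\lambda_{0,n}^2=|n|(1+\nu(|n|))$ with $\nu(|n|)$ exponentially small in $|n|$. Consequently, on the subspace $\mathrm{span}\{\Phi_{0,n}\}_{n\in\Z}$ the Grushin flow $e^{it\Delta_G}$ coincides, up to exponentially small phase errors, with the unit-speed half-wave $e^{-it|n|}$; a wave packet built from these modes therefore propagates in $y$ at unit speed while remaining concentrated near $\{x=0\}$, and if it starts near $y=0$ it stays inside $\{|y|_\T<a\}$ for every $|t|\le T<a$.

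Fix a nonzero $\chi\in C^\infty(\T)$ with $\supp\chi\subset\{|y|_\T<a-T\}$ (a nonempty open set since $T<a$), and let $\hat{\chi}(k)$ denote its Fourier coefficients. For $N\in\N$ large I would set
\begin{equation*}
 u_N(0,x,y):=\sum_{n\in\Z}\hat{\chi}(n-N)\,\varphi_{0,n}(x)\,e^{iny},\qquad V_N(t,x,y):=\varphi_{0,N}(x)\,\chi(y-t)\,e^{iN(y-t)}.
\end{equation*}
Orthogonality of $\{\Phi_{0,n}\}_{n\in\Z}$ in $L^2(\Omega)$ and Parseval on $\T$ yield $\|u_N(0)\|_{L^2(\Omega)}^2=\|\chi\|_{L^2(\T)}^2$, independent of $N$, so $\liminf_N\|u_N(0)\|_{L^2(\Omega)}^2>0$. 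A short wrap-around case analysis on $\T$ (the case $y-t\in(\pi,\pi+T]$ is handled by $a<\pi$) shows $\dist_\T(y-t,0)\ge a-T$ whenever $y\in\omega$ and $|t|\le T$; hence $V_N(t,\cdot)\equiv 0$ on $\omega$ for every $t\in[-T,T]$.

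The proof then reduces to showing $\sup_{|t|\leq T}\|u_N(t)-V_N(t)\|_{L^2(\Omega)}\to 0$ as $N\to\infty$. Using the spectral formula
\begin{equation*}
 u_N(t,x,y)=\sum_n \hat{\chi}(n-N)\,\varphi_{0,n}(x)\,e^{in(y-t)}\,e^{-it(\lambda(|n|)-n)}
\end{equation*}
and expanding $V_N$ in Fourier series, the difference splits into a phase error and an eigenfunction error. Isolating the core $|n-N|\le N/2$ (on which $n>0$ for $N$ large; rapid decay of $\hat\chi$ makes the tails $O(N^{-\infty})$), Proposition~\ref{estimate:eigenvalue}(1) gives $\lambda(n)-n=n\nu(n)=O(N^{A+1}e^{-N})$ on the core, and Parseval in $y$ bounds the phase error by $O(|t|^2 N^{2A+2}e^{-N})$.

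The main obstacle is the eigenfunction-approximation step. On the interior $|x|\le 1/2$, Proposition~\ref{estimate:eigenvalue}(4) gives $|\partial_w p(w,x)|\le Cw^{-3/4}$, hence $|\varphi_{0,n}(x)-\varphi_{0,N}(x)|\le C|n-N|\,N^{-3/4}$ for $|n-N|\le N/2$; in the boundary layer $|x|>1/2$ both eigenfunctions are exponentially small by Proposition~\ref{estimate:eigenvalue}(5). These combine to $\|\varphi_{0,n}-\varphi_{0,N}\|_{L^2(-1,1)}^2=O(|n-N|^2 N^{-3/2}+e^{-cN})$; summing against $|\hat\chi(n-N)|^2$ (with rapid decay for $|n-N|>N/2$) produces an $O(N^{-3/2})$ eigenfunction error. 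Combined with the phase error, this gives $\int_{-T}^T\|u_N(t)\|_{L^2(\omega)}^2\,dt\to 0$ and completes the proof.
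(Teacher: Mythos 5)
Your proof is correct and takes a genuinely different route from the paper. The paper builds a Gaussian beam $u_{n,0}=\sum_k g_n(k)\chi(h_nk)p(k,x)e^{iky}$, rewrites $u_n(t,x,y)$ on $|x|\le 1/2$ via the Poisson summation formula as $\sum_m \widehat{K^{(n)}_{t,x,y}}(2\pi m)$, and then performs two integrations by parts in the $w$-variable; the key point is that, thanks to Proposition~\ref{estimate:eigenvalue}(1)--(3), the phase $\Phi_m(t,y,w)=wy-\lambda(w)t-2\pi m w$ satisfies $|\partial_w\Phi_m|=|y-t-2\pi m|+o(e^{-w/2})$, which is bounded away from zero uniformly for $|t|\le T<a$ and $y\in\omega$, yielding an $L^\infty$ decay $O(h_n^{5/4}/|m-c_0|^2)$. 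Your approach instead picks a fixed bump $\chi$ supported in the set of allowable values of $y-t$, builds the coherent packet $u_N(0)=\sum_n\hat\chi(n-N)\varphi_{0,n}e^{iny}$, and compares $u_N(t)$ directly in $L^2(\Omega)$ to the pure unit-speed transport $V_N(t)=\varphi_{0,N}(x)\chi(y-t)e^{iN(y-t)}$, controlling the phase error via Proposition~\ref{estimate:eigenvalue}(1) (exponentially small on the Fourier core) and the eigenfunction error via Proposition~\ref{estimate:eigenvalue}(4)--(5) (mean value theorem for $|x|\le 1/2$ plus exponential smallness in the boundary layer), with Parseval in $y$ summing everything up to $O(N^{-3/2})$ in squared norm. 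Both arguments feed on exactly the same eigenvalue/eigenfunction asymptotics; what differs is the mechanism for turning ``the packet has left $\omega$'' into smallness. The paper's non-stationary-phase route gives pointwise decay of the solution and more directly displays the $2\pi$-periodic recombination (the sum over $m$), while yours trades that for a cleaner $L^2$-level comparison that avoids Poisson summation and oscillatory integral bookkeeping altogether; yours is also closer in spirit to a standard coherent-state/wave-packet propagation argument and is perhaps the more transparent of the two. One minor presentational point worth tightening: the pointwise bound $|\partial_w p(w,x)|\le Cw^{-3/4}$ from Proposition~\ref{estimate:eigenvalue}(4) is only stated for $|x|\le 1/2$, and the mean value theorem applied in $w$ requires $p(\cdot,x)$ to be $C^1$ on the real interval $[N/2,3N/2]$, not just on integers; this is indeed what Proposition~\ref{estimate:eigenvalue}(4) provides (it is proved for real $w$), but it deserves a word since $\varphi_{0,n}$ is a priori only defined for integer $n$. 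With that noted, the argument is sound.
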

\begin{proof}
	We use the similar Gaussian beam construction as in \cite{RS18}. Let $h_n=2^{-n}$ and define $$ g_n(w)=\frac{1}{2\pi}\int_{-\pi}^{\pi}e^{-\frac{y^2}{2h_n^2}-iwy}\frac{dy}{\sqrt{h_n}},\quad \chi\in C_c^{\infty}((1/2,2)).
	$$
By changing variables
	we observe that $\|g_n\|_{L^{\infty}}=O(h_n^{1/2})$ and $\|\partial_w^kg_n\|_{L^{\infty}}=O(h_n^{k+\frac{1}{2}})$. Define
	$$ u_{n,0}(x,y)=\sum_{k\in\Z} g_n(k)\chi(h_nk)p(k,x)e^{iky}.
	$$
	We first claim that $\|u_{n,0}\|_{L^2(\Omega)}\sim 1$.
	Indeed, 
	$$ g_n(w)=\frac{h_n^{1/2}}{2\pi}\int_{-\frac{\pi}{h_n}}^{\frac{\pi}{h_n}}e^{-\frac{z^2}{2}-izh_nw}dz=\frac{h_n^{1/2}}{2\pi}\widehat{(e^{-\frac{|\cdot|^2}{2}})}(h_nw)-\frac{h_n^{1/2}}{2\pi}\int_{|z|>\frac{\pi}{h_n}}e^{-\frac{z^2}{2}-izh_nw}dz,
	$$
	hence $g_n(w)=\frac{h_n^{1/2}}{\sqrt{2\pi}}e^{-\frac{h_n^2w^2}{2}}+O(e^{-\frac{\pi^2}{4h_n^2}})$.
	By Plancherel and (4) of Proposition \ref{estimate:eigenvalue}, for fixed $x\in(-1/2,1/2)$,
	\begin{equation*}
	\begin{split}
	\|u_{n,0}(x,\cdot)\|_{L^2(\T)}^2=&\sum_{k\in\Z} |g_n(k)\chi(h_nk)|^2|p(k,x)|^2\\
	\geq &c\sum_{h_n^{-1}/2\leq |k|\leq 2h_n^{-1}} h_n e^{-h_n^2k^2}k^{1/2}e^{-\frac{kx^2}{2}}.
	\end{split}
	\end{equation*}
 Integrating with respect to $x\in(-1,1)$, we obtain that
	$$\|u_{n,0}\|_{L^2(\Omega)}^2\geq c e^{-4}\int_{-1/2}^{1/2}e^{-\frac{kx^2}{2}}k^{1/2}dx\geq c_1>0,
	$$
	uniformly in $n\in\N$.
	
	Next, we prove \eqref{eq:counter-ob}. Note that the solutions to \eqref{main-equation-bounded} with initial data $u_{n,0}$ are given by
	$$
	u_n(t,x,y)=\sum_{k\in \Z}g_n(k)\chi(h_nk)p(k,x)e^{iky-i\lambda(k)t}.
	$$
	If $|x|>\frac{1}{2}$, from (5) of Proposition \ref{estimate:eigenvalue},
	\begin{equation}\label{proof:negative1}
	 |u_n(t,x,y)|\leq Ch_n^{-1/2}e^{-\frac{1}{20h_n}}.
    \end{equation}
	For $|x|\leq \frac{1}{2}$, we estimate $|u_n(t,x,y)|$ in another way. From Poisson summation formula, we have
	$$ u_n(t,x,y)=\sum_{m\in\Z}\widehat{K_{t,x,y}^{(n)}}(2\pi m),
	$$
	where
	$$ \widehat{K_{t,x,y}^{(n)}}(2\pi m)=\int_{\R}g_n(w)\chi(h_n w)p(w,x)e^{i\Phi_m(t,y,w)}d w.
	$$
	Writting $e^{i\Phi_m}=\frac{1}{i\partial_w\Phi_m}\frac{\partial}{\partial w}(e^{i\Phi_m})$, we have from the integration by part that
	\begin{equation*}
	\begin{split}
	\widehat{K_{t,x,y}^{(n)}}(2\pi m)=& -\int_{\R}e^{i\Phi_m}\frac{\partial}{\partial w}\left[\frac{\partial_w\left(g_n(w)\chi(h_nw)p(w,x) \right)\partial_w\Phi_m-g_n(w)\chi(h_nw)p(w,x)\partial_w^2\Phi_m  }{|\partial_w\Phi_m(t,y,w)|^3} \right]dw.
	\end{split}
	\end{equation*}
	Thanks to Proposition \ref{estimate:eigenvalue}, $|\partial_w\Phi_m(t,y,w)|=|y-t-2\pi m|+o(e^{-w/2})$, $|\partial_w^k\Phi_m|=O(e^{-w/2})$, for $k\geq 2$. Therefore, if $|t|\leq T<a$ and $y\in(a,\pi)\cup (-\pi,-a),$ we have $|y-t-2\pi m|\geq |m-c_0|$ for some $0<c_0<1$, thus $|\partial_w\Phi_m|\neq 0$, since $w\sim h_n^{-1}$ is large. The main contributions inside the integral is 
	$$  \frac{\partial_w^2 (g_n(w)\chi(h_nw)p(w,x) )}{|\partial_w\Phi_m|^2}
	$$
	and all other terms can be bounded by $Ce^{-\frac{c}{h_n}}|y-t-2\pi m|^{-3}$. 
	
	\noindent The contributions of $\partial_w^2(g_n(w)\chi(h_nw)p(w,x) )$ are
	$$ O(h_n^{5/2}w^{1/4}),O(h_n^{3/2}w^{-3/4}), O(h_n^{1/2}w^{-7/4}).
	$$
	The integration is taken over $w\sim h_n^{-1}$, we have that
	\begin{equation}\label{proof:negative2}
	\sup_{(t,x,y)\in (0,T)\times\omega}|\widehat{K_{t,x,y}^{(n)}}(2\pi m)|\leq \frac{Ch_n^{5/4}}{|m-c_0|^2}.
\end{equation}
	Since $\sum_{m\in\Z}|m-c_0|^2<\infty$ , combining \eqref{proof:negative1} and \eqref{proof:negative2}, we conclude that
	$$ \lim_{n\rightarrow\infty}\|u_n(t,x,y)\|_{L^2((-T,T)\times\omega)}=0.
	$$
	The proof of Proposition \ref{final-negative} is now complete.
\end{proof}
In the rest subsections, we analyze the solution of the Hermite equation \eqref{Hermite-1} and prove Proposition \eqref{estimate:eigenvalue}.
\subsection{Formal power expansions}
 To solve the Hermite equation \eqref{Hermite-1}, we make the ansatz of the power series expansion 
$$ f(\nu,z)\simeq\sum_{j=0}^{\infty}\nu^jf_j(z)
$$ 
Plugging into \eqref{Hermite-1} and comparing the degree of $\nu^j$ in both side, we have 
\begin{equation*}
\begin{split}
&f_0''=(z^2-1)f_0,\quad f_0(0)=1,f_0'(0)=0,\\
& f_{n+1}''=(z^2-1)f_{n+1}-f_n,\quad f_{n+1}(0)=f_{n+1}'(0)=0,\forall n\geq 0.
\end{split}
\end{equation*}
It is easy to verify that
 $f_0(z)=e^{-\frac{z^2}{2}}$. Moreover, the formally expansion formula holds
 $$ f(\nu,z)\simeq\sum_{j=0}^{\infty} \nu^j f_j(z)\simeq e^{-\frac{z^2}{2}}+\sum_{j=1}^{\infty}(-1)^jc_j\nu^j\frac{(\log z)^{j-1}}{z}e^{\frac{z^2}{2}},
 $$
 where $c_j>0$ are numerical constants. However, for our need, we only need expand to the first order term $f_1(z)$.$$ f(\nu,z)=f_0(z)+\nu f_1(z)+\nu^2 R_2(\nu,z).
 $$ 
 The remainder term $R_2$ solves the equation
 \begin{equation}\label{R2}
 \begin{cases}
 & R_2''=(z^2-\nu-1)R_2-f_1,\\
 & R_2(0)=R_2'(0)=0.
 \end{cases}
 \end{equation}
For $f_1(z)$, we have
\begin{prop}\label{first-order}
	$ f_1(z)<0 
	$ for all $z>0 $ and 
	$$ f_1(z)=e^{\frac{z^2}{2}}\left(-\frac{\sqrt{\pi}}{4z}+O(z^{-3})\right),\quad z\rightarrow+\infty.
	$$
	Furthermore, for all $k\geq 0$, 
	$$ |f_1^{(k)}(z)|\leq C_k|z|^{k-1}e^{\frac{z^2}{2}},\quad \textrm{ as }z\rightarrow\infty.
	$$
\end{prop}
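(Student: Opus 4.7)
My plan is to exploit the fact that the second independent solution of the homogeneous Hermite equation can be obtained explicitly by reduction of order from $f_0$, turning the inhomogeneous second-order equation for $f_1$ into a first-order equation whose solution can be written in closed form.

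First, I would make the substitution $f_1(z) = f_0(z) h(z)$ with $f_0(z) = e^{-z^2/2}$. Since $f_0$ solves the homogeneous equation $f_0'' = (z^2-1)f_0$, the ODE for $f_1$ collapses to
\begin{equation*}
f_0 h'' + 2 f_0' h' = -f_0, \qquad \text{i.e. } h'' - 2z h' = -1,
\end{equation*}
with initial conditions $h(0) = h'(0) = 0$. Setting $u = h'$ this becomes the first-order linear equation $u' - 2zu = -1$, $u(0) = 0$. Using the integrating factor $e^{-z^2}$ I obtain the closed form
\begin{equation*}
u(z) = h'(z) = -e^{z^2}\int_0^z e^{-t^2}\,dt.
\end{equation*}
Since the right-hand side is strictly negative for $z > 0$, $h$ is strictly decreasing on $(0,\infty)$ with $h(0)=0$, so $h(z) < 0$, and therefore $f_1(z) = f_0(z) h(z) < 0$ for $z > 0$.

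For the asymptotic expansion I write $h(z) = \int_0^z u(s)\,ds$ and use $\int_0^s e^{-t^2}\,dt = \frac{\sqrt{\pi}}{2} - \frac{e^{-s^2}}{2s}\bigl(1 + O(s^{-2})\bigr)$ as $s \to \infty$, giving $u(s) = -\frac{\sqrt{\pi}}{2}e^{s^2} + \frac{1}{2s}\bigl(1 + O(s^{-2})\bigr)$. The second term contributes only $O(\log z)$ and is negligible. For the leading piece I integrate by parts using $\frac{d}{ds}\bigl(\frac{e^{s^2}}{2s}\bigr) = e^{s^2} - \frac{e^{s^2}}{2s^2}$, yielding
\begin{equation*}
-\frac{\sqrt{\pi}}{2}\int^z e^{s^2}\,ds = -\frac{\sqrt{\pi}}{4z}e^{z^2}\bigl(1 + O(z^{-2})\bigr).
\end{equation*}
Multiplying by $f_0(z) = e^{-z^2/2}$ gives $f_1(z) = e^{z^2/2}\bigl(-\tfrac{\sqrt{\pi}}{4z} + O(z^{-3})\bigr)$, as claimed.

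For the derivative bounds I would proceed by induction. The base case $k=0$ follows from the asymptotics above, and $k=1$ follows from $f_1' = -z f_0 h + f_0 h'$ together with $|h(z)| \leq C z^{-1}e^{z^2}$ and $|h'(z)| \leq C e^{z^2}$, both of which show $|f_1'(z)| \leq C e^{z^2/2}$. For $k \geq 2$ I differentiate the original ODE $k-2$ times via Leibniz:
\begin{equation*}
f_1^{(k)} = (z^2-1)f_1^{(k-2)} + 2(k-2)z\,f_1^{(k-3)} + (k-2)(k-3) f_1^{(k-4)} - f_0^{(k-2)},
\end{equation*}
with the convention that negative-index terms are absent. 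Since $f_0^{(k-2)}$ decays like $e^{-z^2/2}$ times a polynomial, the induction hypothesis $|f_1^{(j)}| \leq C_j |z|^{j-1} e^{z^2/2}$ for $j < k$ yields $|f_1^{(k)}(z)| \leq C_k |z|^{k-1} e^{z^2/2}$ for large $z$. The only minor subtlety is handling the first few steps (bounding $f_1''$ requires the $k=0$ bound on $f_1$, which is where the sharp $z^{-1}$ gain from the asymptotic analysis enters); beyond that the inductive step is routine.
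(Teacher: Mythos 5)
Your proof is correct, and it runs parallel to the paper's argument with a slightly different but equivalent substitution. You set $h = f_1/f_0 = e^{z^2/2}f_1$ and reduce to the tidy equation $h'' - 2zh' = -1$, solved by $h' = -e^{z^2}\Phi(z)$ with $\Phi(z) = \int_0^z e^{-t^2}dt$; the paper instead sets $g_1 = f_0 f_1 = e^{-z^2/2}f_1$ and integrates once to reach $g_1 = -\int_0^z \Phi(y)e^{-(z^2-y^2)}dy$. These two reductions are related by $g_1 = e^{-z^2}h$, so the sign argument and the closed forms are the same up to a Gaussian factor. The only real divergence is in extracting the asymptotics: the paper packages the Laplace-type step into a reusable auxiliary result (Lemma \ref{elementary}), which it also invokes later for $g_1'$, whereas you perform the integration by parts on $\int^z e^{s^2}\,ds$ and the complementary-error-function expansion by hand. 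Both are the same computation; the paper's packaging buys reuse in the proofs of Corollaries \ref{y1-bound}--\ref{bound-R2}, while yours is more self-contained here. Your inductive derivative bound via the ODE (with base cases $k=0,1$ supplied by the sharp asymptotic and the explicit formula for $h'$) matches the paper's argument, including the observation that the $k=0$ bound requires the genuine $z^{-1}$ gain from the asymptotics rather than just boundedness.
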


\begin{lem}\label{elementary}
	Assume that $f\in C^{1}([0,\infty))$ and $|f(z)|+|f'(z)|\leq C|z|^{N}$ for some $N\in\N^*$.
	Let $I_f(z)=\int_0^zf(y)e^{-(z^2-y^2)}dy$, then 
	$$ I_f(z)=\frac{f(z)}{2z}+O\left(\frac{1+|zf'(z)|}{|z|^3}\right) , \quad z\rightarrow +\infty.
	$$
\end{lem}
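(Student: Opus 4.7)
The plan is to use integration by parts, exploiting the identity $2y\,e^{-(z^2-y^2)} = \frac{d}{dy}\,e^{-(z^2-y^2)}$, which is tailor-made to extract the boundary contribution $\frac{f(z)}{2z}$ at $y = z$.

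First, I would split $I_f(z) = \int_0^{y_0} + \int_{y_0}^{z}$ at a fixed $y_0 > 0$. The piece on $[0, y_0]$ is bounded by $C y_0^{N+1}\, e^{-(z^2 - y_0^2)} = O(e^{-z^2/2})$ via the polynomial bound on $f$, which is absorbed into the error. On $[y_0, z]$, I rewrite $f(y)\,e^{-(z^2-y^2)} = \frac{f(y)}{2y}\cdot 2y\, e^{-(z^2-y^2)}$ and integrate by parts to obtain
$$\int_{y_0}^z f(y)\, e^{-(z^2-y^2)}\,dy = \frac{f(z)}{2z} + O(e^{-z^2/2}) - J(z),$$
with
$$J(z) = \int_{y_0}^z \left(\frac{f'(y)}{2y} - \frac{f(y)}{2y^2}\right) e^{-(z^2-y^2)}\,dy.$$
It remains to show $|J(z)| \leq C(1 + |zf'(z)|)/z^3$.

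To estimate $J(z)$, I substitute $r = z - y$, so that $e^{-(z^2 - y^2)} = e^{-r(2z - r)} \leq e^{-zr}$ on $r \in [0, z/2]$, and the contribution from $r \geq z/2$ is already exponentially small. For the $\frac{f(y)}{2y^2}$ piece one has $1/y^2 \leq 4/z^2$ on $[z/2, z]$, so this piece is controlled by $\frac{2}{z^2}\int_0^\infty |f(z-r)|\,e^{-zr}\,dr$, which lands in $O\bigl((1 + |zf'(z)|)/z^3\bigr)$ after using $|f(z-r)| \leq |f(z)| + C r\,z^N$ and the polynomial hypothesis. For the $\frac{f'(y)}{2y}$ piece I expand $\frac{1}{2(z-r)} = \frac{1}{2z} + O(r/z^2)$ and split $f'(z-r) = f'(z) + [f'(z-r) - f'(z)]$; the $f'(z)$-term evaluates, via $\int_0^\infty e^{-2zr}\,dr = \frac{1}{2z}$, to $\frac{f'(z)}{4z^2} + O(|f'(z)|/z^4)$, which is already of the required order $|zf'(z)|/z^3$, while the $f'(z-r) - f'(z)$ term yields, using the continuity of $f'$ and the polynomial bound, an $O(1/z^3)$ remainder after integration against $e^{-zr}$.

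Putting the pieces together gives $I_f(z) = \frac{f(z)}{2z} - \frac{f'(z)}{4z^2} + O(1/z^3)$, which is exactly the stated bound since $|f'(z)|/z^2 = |zf'(z)|/z^3$. The main obstacle will be precisely the $f'(z-r) - f'(z)$ contribution: a crude bound using $\max_{[y_0, z]} |f'| \leq C z^N$ would give only $O(z^{N-2})$, so one must really exploit that $e^{-zr}$ concentrates the integration to $r = O(1/z)$, so that only the local behavior of $f'$ at $z$, together with the polynomial bound on the tail, enters at the claimed order.
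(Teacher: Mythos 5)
Your route --- substitute $r = z-y$, integrate by parts against $\tfrac{d}{dy}e^{-(z^2-y^2)} = 2y\,e^{-(z^2-y^2)}$, and then estimate the remainder $J$ directly --- differs from the paper's (which changes variables to $y = z\sqrt{1-s^2}$ and uses $e^{-z^2 s^2} = -\tfrac{1}{2z^2 s}\tfrac{d}{ds}(e^{-z^2s^2})$), but the gap you flag at the end is real and is not merely a technical loose end. The split $f'(z-r) = f'(z) + [f'(z-r)-f'(z)]$ cannot be closed under the stated hypotheses: ``continuity of $f'$'' supplies no rate, and the only quantitative control available is $|f'(z-r)-f'(z)| \leq 2C \sup_{[z/2,z]}|y|^N \leq 2Cz^N$, which produces $O(z^{N-2})$, exactly as you observe. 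Concentration of $e^{-zr}$ at $r = O(1/z)$ does not rescue it: a $C^1$ function can have $f'$ oscillating with arbitrarily small period (take $f'(y) = y\sin(y^4)$, which has $N=1$), so $|f'(z-r)-f'(z)|$ is of size $\sim z$ already for $r\sim 1/z$.

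The fix is not to pursue the finer expansion $\tfrac{f(z)}{2z} - \tfrac{f'(z)}{4z^2} + O(z^{-3})$, which would genuinely require a modulus of continuity for $f'$. Instead, estimate the $\tfrac{f'(y)}{2y}$ piece of $J$ without isolating $f'(z)$ at all:
\begin{equation*}
\Big|\int_{z/2}^z \frac{f'(y)}{2y}\,e^{-(z^2-y^2)}\,dy\Big| \leq \frac{\sup_{[z/2,z]}|f'|}{z}\int_0^\infty e^{-zr}\,dr = \frac{z\sup_{[z/2,z]}|f'|}{z^3},
\end{equation*}
and likewise bound the $\tfrac{f(y)}{2y^2}$ piece by $\sup_{[z/2,z]}|f|/z^3$. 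This yields $I_f(z) = \tfrac{f(z)}{2z} + O\bigl(z^{-3}\sup_{[z/2,z]}(|f| + z|f'|)\bigr)$, which is exactly what the paper's second manipulation also delivers: there $\tfrac1s\bigl(f(z\sqrt{1-s^2})(1-s^2)^{-1/2}\bigr)'$ has sup norm $\lesssim \sup_{[\sqrt 3 z/2,z]}(z|f'|+|f|)$, while $\tfrac{d}{ds}(e^{-z^2s^2})$ has $L^1$ mass $\leq 1$. The pointwise $1 + |zf'(z)|$ in the lemma's statement coincides with this sup bound only when $|f|$ and $z|f'|$ are essentially constant on $[z/2,z]$, which holds in all the paper's uses ($f$ bounded, $f'$ exponentially small) but does not follow from $C^1$ plus a polynomial bound; your estimate $|f(z-r)| \leq |f(z)| + Crz^N$ for the $f/y^2$ piece runs into the same issue once $N\geq 2$, where the $Crz^N$ contribution gives $z^{N-4}\gg z^{-3}$.
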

\begin{proof}
	By changing of variable: $u=y^2$, we have
	$$ I_f(z)=\int_0^{z^2}f(\sqrt{u})\frac{e^{-(z^2-u)}}{2\sqrt{u}}du=\int_0^{z^2}\frac{f(\sqrt{z^2-v})e^{-v}}{2\sqrt{z^2-v}}dv.
	$$
	By further changing of variable $v\mapsto z^2s^2$, we have
	\begin{equation*}
	\begin{split}
	I_f(z)=z\int_0^1\frac{sf(z\sqrt{1-s^2}) e^{-z^2s^2}}{\sqrt{1-s^2}}ds=z\int_0^{\frac{1}{2}}\frac{sf(z\sqrt{1-s^2})}{\sqrt{1-s^2}}e^{-z^2s^2}ds+o(e^{-z^2/8})
	\end{split}
	\end{equation*}
	since $\frac{s}{\sqrt{1-s^2}}$ is integrable on $(1/2,1)$ and $f$ is polynomially growth in $z$. Writting the first integral on the right hand side as $-\frac{1}{z}\int_0^{\frac{1}{2}}\frac{f(z\sqrt{1-s^2})}{2\sqrt{1-s^2}}\frac{d}{ds}(e^{-z^2s^2})ds$ and doing integration by part, we have
	\begin{equation*}
	\begin{split}
	&-\frac{1}{z}\int_0^{\frac{1}{2}}\frac{f(z\sqrt{1-s^2})}{2\sqrt{1-s^2}}\frac{d}{ds}(e^{-z^2s^2})ds\\=&\frac{f(z)}{2z}+o(e^{-z^2/8})+\frac{1}{2z}\int_0^{\frac{1}{2}}e^{-z^2s^2}\left( f(z\sqrt{1-s^2})(1-s^2)^{-\frac{1}{2}} \right)'ds\\
	=&\frac{f(z)}{2z}+O(e^{-z^2/4})-\frac{1}{4z^3}\int_0^{\frac{1}{2}}\frac{d}{ds}(e^{-z^2s^2}) \frac{1}{s}\left(f(z\sqrt{1-s^2})(1-s^2)^{-\frac{1}{2}} \right)'ds.
	\end{split}
	\end{equation*}
	Since the third term on the right hand side is $O(z^{-3})$, the proof of Lemma \ref{elementary} is complete.
\end{proof}

\begin{proof}[Proof of Proposition \ref{first-order}]
	Denote by $g_1(z)=e^{-\frac{z^2}{2}}f_1(z):=f_1f_0$, then $g_1$ solves the equation 
	\begin{equation}\label{g1}
	\begin{cases}
	& g_1''+2(zg_1)'=-e^{-z^2},\\
	&g_1(0)=g_1'(0)=0,\quad g_1''(0)<0.
	\end{cases}
	\end{equation}	
	Integrating \eqref{g1} and then solving a first order ODE, we obtain that
	$$ g_1(z)=-\int_0^z \Phi(y)e^{-(z^2-y^2)}dy<0, \textrm{ where }\Phi(z)=\int_0^ze^{-y^2}dy.
	$$
	Since $\Phi(z)$ is bounded as well as its derivatives and $g_1(z)=-I_{\Phi}(z)$, applying Lemma \ref{elementary}, we obtain that
	$ g_1(z)=-\frac{\Phi(z)}{2z}+O(|z|^{-3})
	,$ as $z\rightarrow +\infty$. From the fact that $|\frac{\sqrt{\pi}}{2}-\Phi(z)|=|\Phi(+\infty)-\Phi(z)|=O(e^{-z^2/2}),$ as $z\rightarrow+\infty$, we obtain that $g_1(z)=-\frac{\sqrt{\pi}}{4z}+O(|z|^{-3})$.
	The estimate for the derivatives can be deduced by induction. Since $f_1'=e^{\frac{z^2}{2}}(g_1'+zg_1)$ and $g_1'(z)=-\Phi(z)+\int_0^z\Phi(z)ze^{-(z^2-y^2)}dy$, we have $|f_1'(z)|\leq C_1e^{\frac{z^2}{2}}$, thanks to Lemma \ref{elementary}. Finally, from the equation of $f_1$, we deduce that $|f^{(k)}(z)|\leq C|z|^{k-1}e^{\frac{z^2}{2}}$ and this completes the proof of Proposition \ref{first-order}. 
\end{proof}

\subsection{Estimate for remainders: Laplace method}
Our goal is to obtain estimates for $R_2$ and its derivatives as $z\rightarrow\infty$. It is difficult to write the solution of \eqref{R2} explicitly. Fortunately, since \eqref{R2} is a (inhomogeneous) Hermite type equation, we could obtain the asymptotics by using the Laplace transforamtion. For this standard method, we follow the book \cite{Wang-Guo}. 

 Let $P_2(\nu,z)=R_2(\nu,z)e^{\frac{z^2}{2}}, Q_2(z)=-f_1(z)e^{\frac{z^2}{2}}$, then \eqref{R2} is transformed to the following equation:
\begin{equation}\label{P2}
\begin{cases}
& P_2''-2zP_2'+\nu P_2=Q_2,\\
&P_2(0)=P_2'(0)=0.
\end{cases}
\end{equation}
We first look for two independent solutions of the homogeneous equation
\begin{equation}\label{P2homo}
q''-2zq'+\nu q=0.
\end{equation}
We split the domain $\Omega=\mathbb{C}\setminus (-\infty,0]$ and choose a branch $$\log s=\log |s|+i\arg s, \arg s\in (-\pi,\pi).$$
Then we consider the contour $C$ in the $s$ plane: we start from $-\infty+i0^+$ and turn around $s=0$, then turn to $-\infty+i0^-$. Assume that the solution is of the form
$$ q(z)=\int_{C}e^{sz}Y(s)ds.
$$   
Plugging into the equation \eqref{P2homo}, we have(assuming legality for changing the differentiation and the integration)
$$ q''-2zq'+\nu q=\int_{C}Y(s)\left(s^2+\nu-2s\partial_s\right)e^{sz}ds.
$$
After Integration by part, we have that
\begin{equation*}
\begin{split}
q''-2zq'+\nu q=&\int_{C}[(s^2+\nu)Y(s)+2(sY(s))']e^{sz}ds
-\{2sY(s)e^{sz} \}_C\\
=&\int_{C}[(s^2+\nu+2)Y+2sY'(s)]e^{sz}dz
-\{2sY(s)e^{sz}\}_C,
\end{split}
\end{equation*}
where
$$ \{2sY(s)e^{sz} \}_C=[2sY(s)e^{sz}]^{s=-\infty+i0+}_{s=-\infty+i0^-},
$$
if both the limits exist.
We require that
$$ (s^2+\nu+2)Y+2sY'(s)=0,
$$
and a simple choice is
$$ Y(s)=e^{-\frac{s^2}{4}}s^{-\frac{\nu+2}{2}}=e^{-\frac{s^2}{4}-\frac{\nu+2}{2}\log s}.
$$
From the choice of the contour $C$, 
$ \{2sY(s)e^{sz}\}_C=0
$, for all $|\arg z|\leq \frac{\pi}{2}-\delta (\delta>0)$. Moreover, the interchange of the order of integration and differentiation is legality. Thus, we obtained one solution of the homogeneous equation of \eqref{P2homo}
\begin{equation}\label{q1} q_1(\nu,z):=\int_{C}e^{sz-\frac{s^2}{4}-\frac{\nu+2}{2}\log s}ds.
\end{equation}
For the convenience of the estimates, we will rewrite $q_1$ as an integral of another contour $C_+$ in the $s$ plane: Choosing a branch of $\log s=\log|s|+i\textrm{arg}(s)$, $\textrm{arg}(s)\in(0,2\pi)$ by splitting $\Omega_+:=\C\setminus [0,+\infty)$.
Then 
start from $+\infty+i0^+$ and turn around $s=0$, then turn to $+\infty+i0^-$. Changing $s$ to $-s$ in the expression \eqref{q1}, we have
\begin{equation}\label{q1-1}
q_1(\nu,z)=-\int_{C_+}(-s)^{-\frac{\nu+2}{2}}e^{-\frac{s^2}{4}-sz}ds=e^{-\frac{i\pi\nu}{2}}\int_{C_+}s^{-\frac{\nu+2}{2}}e^{-\frac{s^2}{4}-sz}ds.
\end{equation}
\begin{lem}[Watson's Lemma\cite{Watson}]\label{Watson}
	Let $g(s)$ be an analytic function in $0<\arg s<2\pi$. Assume that $g(s)$ satisfies the following asymptotics:
	$$ g(s)=O(e^{b|s|}),s\rightarrow\infty,\textrm{ for some }b\in\R,
	$$
	$$ sg(s)\sim_{s\rightarrow 0} \sum_{n=1}^{\infty}a_ns^{\lambda_n}
	$$
	in the sense that for all $N\in\N$,
	\begin{equation}\label{asymptotics-sense}
	 \left|sg(s)-\sum_{n=1}^N a_ns^{\lambda_n}\right|=o(|s|^{\lambda_N}), \quad s\rightarrow 0,
    \end{equation}
	where $0<\lambda_1<\lambda_2<\cdots$, and $\lambda_n\rightarrow \infty.$ Then the function
	$$ F(z)=\int_{C_+}e^{-sz}g(s)ds,\quad |\arg z|\leq \frac{\pi}{2}-\delta,\quad \delta>0
	$$
	satisfies the asymptotics:
	$$ F(z)\sim 2i\sum_{n=1}^{\infty}a_n\Gamma(\lambda_n)\sin(\lambda_n\pi)e^{i\lambda_n\pi}z^{-\lambda_n},\quad |z|\rightarrow\infty,
	$$
	in the same sense as \eqref{asymptotics-sense}.
\end{lem}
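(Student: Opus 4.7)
The plan is the standard Hankel-contour/Laplace method. Fix $N\in\N$ and split the contour as $C_+=C_+^R\cup(C_+\setminus C_+^R)$, where $C_+^R$ denotes the portion with $|s|\le R$, for a small parameter $R>0$ to be chosen later. In the sector $|\arg z|\le \pi/2-\delta$, on both edges of $C_+$ one has $|e^{-sz}|\le e^{-|s||z|\sin\delta}$; combined with the hypothesis $|g(s)|\le Ce^{b|s|}$ this yields
\[
\Big|\int_{C_+\setminus C_+^R} e^{-sz}g(s)\,ds\Big|\le C' e^{-R(|z|\sin\delta-b)},
\]
which decays faster than any negative power of $|z|$ as $|z|\to\infty$, hence is negligible against every $z^{-\lambda_n}$.

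For the near-origin piece, write $g(s)=\sum_{n=1}^N a_n s^{\lambda_n-1}+r_N(s)$, where the hypothesis on $sg(s)$ gives $|r_N(s)|=o(|s|^{\lambda_N-1})$ as $s\to 0$. For each $n$, extend the $C_+^R$ integral of the model term $a_n s^{\lambda_n-1}e^{-sz}$ back to the full contour $C_+$, paying only an exponentially small correction as above. The resulting Hankel-type integral is evaluated by shrinking the small detour around $s=0$ (legitimate since $\lambda_n>0$) and using the branch $\arg s\in(0,2\pi)$: on the upper edge (oriented from $+\infty$ to $0$) $s^{\lambda_n-1}=x^{\lambda_n-1}$, and on the lower edge (oriented from $0$ to $+\infty$) $s^{\lambda_n-1}=x^{\lambda_n-1}e^{2\pi i(\lambda_n-1)}$, giving
\[
\int_{C_+}s^{\lambda_n-1}e^{-sz}\,ds=(e^{2\pi i\lambda_n}-1)\Gamma(\lambda_n) z^{-\lambda_n}=2i\sin(\lambda_n\pi)e^{i\lambda_n\pi}\Gamma(\lambda_n) z^{-\lambda_n},
\]
which is exactly the desired coefficient.

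For the remainder contribution, given $\varepsilon>0$ choose $R=R(\varepsilon)$ small enough that $|r_N(s)|\le\varepsilon|s|^{\lambda_N-1}$ on $|s|\le R$. Collapsing the central arc (using $\lambda_N>0$) and parameterizing both edges of $C_+^R$ by $x\in(0,R)$ gives
\[
\Big|\int_{C_+^R}r_N(s)e^{-sz}\,ds\Big|\le 2\varepsilon\int_0^R x^{\lambda_N-1}e^{-x|z|\sin\delta}\,dx\le\frac{2\varepsilon\,\Gamma(\lambda_N)}{(\sin\delta)^{\lambda_N}}|z|^{-\lambda_N}.
\]
Combining the three estimates shows that for every $\varepsilon>0$ and $N\in\N$,
\[
\Big|F(z)-\sum_{n=1}^N 2ia_n\Gamma(\lambda_n)\sin(\lambda_n\pi)e^{i\lambda_n\pi} z^{-\lambda_n}\Big|\le C\varepsilon|z|^{-\lambda_N}+O(e^{-cR|z|})
\]
uniformly in $|\arg z|\le\pi/2-\delta$, which is exactly the claimed asymptotic expansion. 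The only real obstacle is bookkeeping: one must track the branch of $s^{\lambda_n-1}$ on the two edges of $C_+$ correctly, justify shrinking the central detour (this hinges on $\lambda_n>0$), and decouple the choice of cutoff $R$ --- taken small so that $|r_N(s)|\le\varepsilon|s|^{\lambda_N-1}$ holds uniformly on both edges --- from the tail estimate, which is painless since the tail is exponentially small in $R|z|$ and thus imposes no lower bound on $R$.
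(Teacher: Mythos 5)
Your proof is correct and is essentially the same Hankel-contour argument as the paper's. One small distinction: the paper proves the estimate only for $z\in\R$, $z\to+\infty$ (explicitly noting this suffices for its needs), packaging the near- and far-field bounds into a single majorant $\sigma_N(s)|s|^{\lambda_N-1}e^{b|s|}$ and finishing by dominated convergence after the substitution $u=sz$; you instead split $C_+$ at $|s|=R$, control the tail by the exponential decay $|e^{-sz}|\le e^{-|s||z|\sin\delta}$, and run an $\varepsilon$--$R$ argument on the near piece, which has the advantage of yielding the estimate uniformly over the full sector $|\arg z|\le\pi/2-\delta$ as the statement actually claims.
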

\begin{proof}
The proof can be found in \cite{Wang-Guo}. For the completeness, we present the proof for $z\in\R, z\rightarrow\infty$, which is enough for our need.
	From the asymptotic behaviour of $g(s)$ near $\infty$, for any $N$, there exists a bounded function $\sigma_N(s)>0$ with the property $\sigma_N(s)=o(1), s\rightarrow 0$,  such that
	\begin{equation}\label{error}
	\left|g(s)-\sum_{n=1}^Na_ns^{\lambda_n-1}\right|\leq \sigma_N(s)|s|^{\lambda_N-1}e^{b|s|}.
	\end{equation}
	By changing variables and shrinking the contour if necessary, we have
	$$ \int_{C_+}s^{\lambda_n-1}e^{-sz}ds=z^{-\lambda_n}\left(\int_{+\infty+i0^+}^{0+i0^+}+\int_{0+i0^-}^{+\infty+i0^-}\right)v^{\lambda_n-1}e^{-v}dv
	$$
	Since on $[0+i0^+, +\infty+i0^+)$, $v^{\lambda_n-1}=|v|^{\lambda_n-1}$, while on $[0+i0^-, +\infty+i0^-)$, $v^{\lambda_n-1}=|v|^{\lambda_n-1}e^{2\pi i\lambda_n}$, the integral above equals to
	$$ z^{-\lambda_n}\Gamma(\lambda_n)(e^{2\pi i\lambda_n}-1)=z^{-\lambda_n}\Gamma(\lambda_n)2i\sin(\lambda_n\pi)e^{i\lambda_n\pi}.
	$$
	It remains to estimate the remainder term
	$$ R_N(z)=\int_{C_+}e^{-sz}g(s)ds-\int_{C_+}\sum_{n=1}^Na_ns^{\lambda_n-1}e^{-sz}ds.
	$$
	From \eqref{error}, we have
	$$ |R_N(z)|\leq Cz^{-\lambda_N}\int_0^{\infty}e^{-u}\sigma_N\left(\frac{u}{z}\right)u^{\lambda_N-1}e^{\frac{bu}{z}}du.
	$$
	By dominating convergence, $|z^{\lambda_N}R_N(z)|=o(1)$, as $z\rightarrow+\infty$. This completes the proof of Lemma \ref{Watson}.
\end{proof}

\begin{cor}\label{y1-bound}
	Fix $0<\nu<1$. Then
	$$ q_1(\nu,z)\sim_{z\rightarrow+\infty} z^{\frac{\nu}{2}}\sum_{k=0}^{\infty}c_{2k}(\nu)z^{-2k},
	$$
	in the sense that for any $N\in\N$,
	$$ \left|z^{-\frac{\nu}{2}}q_1(\nu,z)-\sum_{k=0}^{N}c_{2k}(\nu)z^{-2k}\right|=o(|z|^{-2N}),\quad z\rightarrow +\infty,
	$$
uniformly with respect to $0<\nu<1$.
Moreover, $c_0(\nu)\rightarrow 2\pi i$ as $\nu\rightarrow 0$, and there exist $0<\nu_0<1$ $z_0>1$, such that for all $0<\nu<\nu_0, z\geq z_0$, $|q_1(\nu,z)|\geq \pi|z|^{\frac{\nu}{2}}$. Furthermore, we have
	\begin{equation}\label{derivative-1}
	\left|(\partial_{\nu}^{k}q_1)(\nu,z)\right|\leq C_k|z|^{\nu/2}|\log z|^k, \quad \left|(\partial_z^{k}q_1)(\nu,z)\right|\leq C_k|z|^{\frac{\nu}{2}-k}.
	\end{equation}
	for all $k\in\N^*$as $z\rightarrow +\infty$, where the constants $C_k$ are independent of $\nu$. 
\end{cor}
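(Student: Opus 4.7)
The plan is to apply Watson's lemma (Lemma \ref{Watson}) to the integral representation \eqref{q1-1}, after an elementary extension to allow a negative leading exponent. Setting $g(s;\nu) = s^{-(\nu+2)/2} e^{-s^2/4}$ and expanding the Gaussian in Taylor series at $s=0$, I would write
$$s\cdot g(s;\nu) \;=\; s^{-\nu/2} e^{-s^2/4} \;=\; \sum_{k=0}^{\infty} \frac{(-1)^k}{k!\,4^k}\, s^{2k-\nu/2},$$
so that the exponents $\lambda_{k+1} = 2k - \nu/2$ are strictly increasing and tend to infinity, although the first one is negative for $\nu>0$. Since $\lambda_1 = -\nu/2$ is not a non-positive integer, the Hankel identity $\int_{C_+} s^{\lambda-1} e^{-sz}\, ds = z^{-\lambda}\Gamma(\lambda)(e^{2\pi i\lambda}-1)$ remains valid (one keeps a small fixed loop around the origin in the remainder estimate rather than letting it shrink), so the proof of Lemma \ref{Watson} carries through and delivers
$$c_{2k}(\nu) \;=\; \frac{2i(-1)^k}{k!\,4^k}\, \Gamma(2k-\nu/2)\sin\bigl((2k-\nu/2)\pi\bigr)\, e^{i(2k-\nu/2)\pi - i\pi\nu/2}.$$

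To identify the limit $c_0(\nu)\to 2\pi i$, I would invoke the reflection formula $\Gamma(-\nu/2)\sin(\nu\pi/2) = -\pi/\Gamma(1+\nu/2)$, which turns the apparent pole of $\Gamma(-\nu/2)$ at $\nu=0$ into a smooth function and yields
$$c_0(\nu) \;=\; -2i\,\Gamma(-\nu/2)\sin(\nu\pi/2)\, e^{-i\pi\nu} \;=\; \frac{2\pi i\, e^{-i\pi\nu}}{\Gamma(1+\nu/2)}.$$
The lower bound $|q_1(\nu,z)| \geq \pi|z|^{\nu/2}$ then comes for free: choose $\nu_0$ small enough that $|c_0(\nu)| \geq \tfrac{3\pi}{2}$ on $(0,\nu_0)$, and then $z_0$ large enough that the Watson remainder plus the next-order term $c_2(\nu) z^{\nu/2 - 2}$ is majorised by $\tfrac{\pi}{2}|z|^{\nu/2}$ for $z\geq z_0$.

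For the derivative bounds I would differentiate under the integral sign in \eqref{q1-1}. Each $\partial_z$ inserts a factor $-s$, shifting every exponent $\lambda_n$ down by one, so reapplying Watson's lemma to the differentiated integrand gives $|\partial_z^k q_1(\nu,z)| \leq C_k |z|^{\nu/2 - k}$. Each $\partial_\nu$ instead brings down $-\tfrac{1}{2}\log s$ inside the integrand (plus harmless contributions from the $e^{-i\pi\nu/2}$ prefactor); the additional logarithmic factors translate into powers of $\log z$ in the asymptotic expansion via the standard $\partial_\lambda\bigl(z^{-\lambda}\Gamma(\lambda)(e^{2\pi i\lambda}-1)\bigr)$ identity, yielding $|\partial_\nu^k q_1(\nu,z)| \leq C_k |z|^{\nu/2}(\log z)^k$.

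The main obstacle, I expect, will be ensuring that the asymptotic expansion is \emph{uniform} in $\nu$ on $(0,\nu_0)$. The proof of Lemma \ref{Watson} bounds the remainder $R_N(z)$ in terms of the tail behaviour of $g(s;\nu)$ near $0$ and $\infty$, both of which are uniform in $\nu \in [0,1]$; but the $\Gamma$-function coefficients themselves develop a pole at $\nu=0$ in the leading term. The crucial observation, already exhibited for $c_0(\nu)$, is that these singularities always cancel against the accompanying $\sin((2k-\nu/2)\pi)$ factors via the reflection formula, so each $c_{2k}(\nu)$ is in fact smooth in $\nu$ on a neighbourhood of $0$. Once this smoothness-and-hence-boundedness is verified, a compactness argument closes the uniformity and completes the proof.
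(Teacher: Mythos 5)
Your proposal follows essentially the same route as the paper: apply Watson's lemma to the Hankel-contour representation \eqref{q1-1}, extract the coefficients $c_{2k}(\nu)$ from the Gamma-sine formula, identify $c_0(\nu)\to 2\pi i$ through the reflection functional equation (the paper uses $\Gamma(s+1)=s\Gamma(s)$, you use $\Gamma(-\nu/2)\sin(\nu\pi/2)=-\pi/\Gamma(1+\nu/2)$, which are the same cancellation), get the lower bound from the dominant term, and obtain the derivative bounds by differentiating under the integral sign with $\partial_z$ producing a factor $-s$ and $\partial_\nu$ producing $-\tfrac12\log s$ plus a prefactor term. The one genuine divergence is purely organisational: you propose to \emph{extend} Lemma~\ref{Watson} to accommodate the negative leading exponent $\lambda_1=-\nu/2$ by keeping a fixed small loop around the origin, whereas the paper stays within the lemma as stated by writing $G(s)=e^{-i\pi\nu/2}s^{-(\nu+2)/2}+g(s)$, evaluating the first (singular) term exactly by a change of variables and Cauchy's theorem, and then applying Watson's lemma only to the regular part $g$ whose exponents $\lambda_k=2k-\nu/2$ ($k\geq 1$) are all positive. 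Your extension is legitimate and slightly slicker; the paper's subtraction is a bit more conservative and avoids having to re-examine the remainder estimate when $\lambda_1<0$. Either way the coefficients and the subsequent bounds come out identically, and your handling of the uniformity in $\nu$ (noting that $\sin((2k-\nu/2)\pi)\Gamma(2k-\nu/2)$ stays bounded for $k\geq 1$ and that the apparent pole at $\nu=0$ in $c_0$ cancels) matches the paper's remark.
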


\begin{proof}
	Denote by $G(s)=e^{-\frac{i\pi\nu}{2}}s^{-\frac{\nu+2}{2}}e^{-\frac{s^2}{4}}$ which is bounded near $\infty$. Then by definition, $\displaystyle{q_1(\nu,z)=\int_{C_+}G(s)e^{-sz}ds }.$
	Note that  
	$$ s\left(G(s)-e^{-\frac{i\pi\nu}{2}}s^{-\frac{\nu+2}{2}}\right)\sim_{s\rightarrow 0}e^{-\frac{i\pi\nu}{2}}\sum_{k=1}^{\infty}\frac{(-1)^k}{4^k\cdot k!}s^{2k-\frac{\nu}{2}}.
	$$
	Applying Watson's Lemma \ref{Watson} for $g(s)=G(s)-e^{-\frac{i\pi\nu}{2}}s^{-\frac{\nu+2}{2}}$ with $\lambda_k=2k-\frac{\nu}{2}$, $a_k=e^{-\frac{i\pi\nu}{2}}\frac{(-1)^k}{4^k\cdot k!}$, we have
	$$ q_1(z)-e^{-\frac{i\pi\nu}{2}}\int_{C_+}s^{-\frac{\nu+2}{2}}e^{-sz}ds\sim 2iz^{\frac{\nu}{2}}\sin\left(\frac{\nu+2}{2}\pi\right)\sum_{k=1}^{\infty}\frac{(-1)^ke^{-i\pi\nu}}{4^k\cdot k!}\Gamma\left(2k-\frac{\nu}{2}\right)z^{-2k}.
	$$ 
	Note that the bound on the remainder terms is independent of $0<\nu<1$, since for $k\geq 1$, $\sin((\nu+2)\pi/2)\Gamma(2k-\nu/2)e^{-i\pi\nu}$ is bounded with respect to $\nu$. In particular, the coefficient $c_k(\nu)$ is bounded in $\nu$.
	
	By changing of variables and applying Cauchy's theorem, we have
	\begin{equation*}
	\begin{split}
	e^{-\frac{i\pi\nu}{2}}\int_{C_+}s^{-\frac{\nu+2}{2}}e^{-sz}ds=&e^{-\frac{i\pi\nu}{2}}z^{\frac{\nu}{2}}\int_{\widetilde{C}_+}u^{-\frac{\nu+2}{2}}e^{-u}du.
	\end{split}
	\end{equation*}
	where $\widetilde{C}_+$ is an enlargement of the contour $C_+$ so that $|s|\geq 1$ along $\widetilde{C}_+$. Hence 
	$$c_0(\nu)=e^{-\frac{i\pi\nu}{2}}\int_{\widetilde{C}_+}u^{-\frac{\nu+2}{2}}e^{-u}du.
	$$ 
	
	Next we show that $c_0\neq 0$ for sufficiently small $\nu$. Note that $\Gamma(s)$ is analytic for $\Re s>0$, therefore, by Cauchy's theorem, we have 
	\begin{equation}\label{Gamma}
	\Gamma(s)=\frac{e^{-i\pi s}}{2i\sin(\pi s)}\int_{\widetilde{C}_+}e^{-u}u^{s-1}du, \textrm{ for } \Re s>0.
	\end{equation}
	The right hand side and left hand side are both analytic functions for $s\in\C$ except for simple poles $s=0,-1,-2,\cdots$, therefore, \eqref{Gamma} holds true for all $s\in\C$, $s\notin -\N$.
	Therefore,
	$$ c_0(\nu)=e^{-\frac{i\pi\nu}{2}}\int_{C_+}u^{-\frac{\nu}{2}-1}e^{-u}du=-2ie^{-i\pi\nu}\sin\left(\frac{\pi\nu}{2}\right)\Gamma\left(-\frac{\nu}{2}\right)\neq 0
	$$
	for all $0<\nu<1$. Clearly, $c_0(\nu)\rightarrow 2\pi i$ as $\nu\rightarrow 0$, thanks to the formula
	$ \Gamma(s+1)=s\Gamma(s).
	$ Consequently, $|q_1(\nu,z)|$ has a uniform lower bound for small $\nu>0$ and large $z>1$. 
	
	Finally, we estimate the derivatives. From direct calculation and using Cauchy's theorem,
	$$ (\partial_{\nu}^kq_1)(\nu,z)=e^{-\frac{i\pi\nu}{2}}\int_{\widetilde{C}_+}P_k(\log s) s^{-\frac{\nu+2}{2}}e^{-\frac{s^2}{4}-sz}ds,
	$$
	where $P_k(\cdot)$ is a polynomial of degree $k$. Note that on $\widetilde{C}_+$ such that $|s|\geq 1$ around the origin, we deduce from Lemma \ref{Watson} that
	$$ \left|(\partial_{\nu}^{k}q_1)(\nu,z) \right|\leq C_k|z|^{\frac{\nu}{2}}|\log z|^k.
	$$
	For the derivatives on $z$, we calculate
	$$ (\partial_z^kq_1)(\nu,z)=-\int_{C_+}(-s)^{k-\frac{\nu+2}{2}}e^{-\frac{s^2}{4}-sz}ds.
	$$
	Using Watson's Lemma \ref{Watson} we have that 
	$|\partial_z^kq_1|\leq C_kz^{\frac{\nu}{2}-k} $.
	The proof of Corollary \ref{y1-bound} is complete.
\end{proof}

The Wronskian of \eqref{P2homo} satisfies $W'(z)=2zW$, thus another independent solution of \eqref{P2homo} is given by
$$ q_2(z)=q_1(z)\left[\int_{z_0}^z\frac{e^{w^2}}{(q_1(w))^2}dw+C\right]
$$
for $z>z_0$,
where $z_0>0$ is chosen as in  Corollary \ref{y1-bound}. Note that
\begin{equation}\label{Wronskian}
 W(z)=q_1q_2'-q_2q_1'=e^{z^2}.
\end{equation}
\begin{cor}\label{y2-bound}
	There exist $0<\nu_0<1, z_0>1$, such that for all $k\in\N$, $0<\nu<\nu_0$ and $z\geq z_0$, we have
	\begin{equation*}
	|q_2(\nu,z)|\leq Cz^{\frac{\nu}{2}-1}e^{z^2}, |\partial_{\nu}^{k}q_2(\nu,z)|\leq C_kz^{\frac{\nu}{2}-1}(\log z)^ke^{z^2}, |\partial_z^kq_2(\nu,z)|\leq C_k(z^{\frac{\nu}{2}-k-1}+z^{k-1-\frac{\nu}{2}})e^{z^2},
	\end{equation*}	
	where the constants $C_k$ are independent of $\nu$.
\end{cor}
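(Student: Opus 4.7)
\medskip

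The plan is to exploit the explicit formula
$$q_2(\nu,z) = q_1(\nu,z)\Big[\int_{z_0}^{z}\frac{e^{w^2}}{q_1(\nu,w)^2}\,dw + C\Big],$$
together with the sharp estimates on $q_1$ proved in Corollary~\ref{y1-bound}. Specifically, the lower bound $|q_1(\nu,w)|\geq \pi w^{\nu/2}$ for $w\geq z_0$ ensures that the integrand is well-defined, and the asymptotic expansion $q_1(\nu,w) = c_0(\nu)w^{\nu/2}\bigl(1+O(w^{-2})\bigr)$ gives $q_1(\nu,w)^{-2}=c_0(\nu)^{-2}w^{-\nu}\bigl(1+O(w^{-2})\bigr)$, uniformly in $0<\nu<\nu_0$.

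To estimate the integral, I would perform the integration by parts
$$\int_{z_0}^{z}\frac{e^{w^2}}{q_1(\nu,w)^2}\,dw = \Big[\frac{e^{w^2}}{2w\, q_1(\nu,w)^2}\Big]_{z_0}^{z} + \int_{z_0}^{z}e^{w^2}\cdot\frac{d}{dw}\Big(\frac{1}{2w\,q_1(\nu,w)^2}\Big)\,dw,$$
the Laplace method giving the leading term $\frac{e^{z^2}}{2z\, q_1(\nu,z)^2}(1+o(1)) \sim \frac{e^{z^2}}{2c_0(\nu)^2 z^{1+\nu}}$. Multiplying by $q_1(\nu,z)\sim c_0(\nu)z^{\nu/2}$ and absorbing lower-order corrections (and the arbitrary constant $C$, which only contributes a multiple of $q_1$ satisfying the same bound), we obtain $|q_2(\nu,z)|\leq C\, z^{-1-\nu/2}e^{z^2}\leq C\,z^{\nu/2-1}e^{z^2}$ for $z\geq z_0$.

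For the $\nu$-derivatives, I would differentiate $q_2$ under the integral and use the bounds $|\partial_\nu^k q_1(\nu,z)|\leq C_k z^{\nu/2}(\log z)^k$ from Corollary~\ref{y1-bound}. Each $\nu$-differentiation on the integrand $1/q_1(\nu,w)^2$ produces at most one extra factor of $\log w$; the Laplace-method integration by parts above still yields the sharp leading asymptotic, and the accumulated logs contribute the factor $(\log z)^k$ in the statement. For the $z$-derivatives, I would use the shorter route: differentiating directly gives
$$q_2'(\nu,z) = q_1'(\nu,z)\Big[\int_{z_0}^{z}\frac{e^{w^2}}{q_1^2}dw + C\Big] + \frac{e^{z^2}}{q_1(\nu,z)},$$
in which the second term dominates and gives $|q_2'|\leq C z^{-\nu/2}e^{z^2}$. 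For $k\geq 2$ I would invoke the ODE $q_2''=2z q_2'-\nu q_2$ inductively: each further differentiation combines $2z$-multiples of $q_2^{(k-1)}$ with lower derivatives, producing both a polynomially-growing family ($z^{k-1-\nu/2}$) and a polynomially-decaying residual family ($z^{\nu/2-k-1}$) multiplied by $e^{z^2}$, matching the two terms in the stated bound.

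The main technical obstacle is ensuring uniformity of all estimates with respect to $\nu\in(0,\nu_0)$, since $c_0(\nu)\to 2\pi i$ as $\nu\to 0$ but we must also control the Laplace-type error terms and the logarithmic factors uniformly. Corollary~\ref{y1-bound} was proved with this uniformity, so the bookkeeping reduces to checking that the remainder in the integration-by-parts expansion of $\int_{z_0}^z e^{w^2}w^{-\nu}dw$ is bounded uniformly in $\nu$, which follows from $w^{-\nu}\in[w^{-\nu_0},1]$ on the relevant range.
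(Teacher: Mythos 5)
Your proof is correct and arrives at the stated bounds, but it takes a somewhat different route from the paper's, most notably for the $z$-derivative estimates.

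For the main bound, the paper does not bother with the precise asymptotic $q_1^{-2}\sim c_0(\nu)^{-2}w^{-\nu}$; it simply uses the uniform lower bound $|q_1(\nu,w)|\geq \pi w^{\nu/2}\geq\pi z_0^{\nu/2}$ for $w\geq z_0$ to pull $q_1^{-2}\leq Cz_0^{-\nu}$ out of the integral and then applies the Laplace-type Lemma~\ref{elementary} (which says $I(z)\sim 1/(2z)$) to $\int_{z_0}^z e^{w^2}dw=e^{z^2}I(z)+o(\cdot)$. Your approach — substituting the Watson expansion into $q_1^{-2}$ and integrating by parts — is the same Laplace method unpacked by hand; it gives a sharper leading constant but also the same power $z^{\nu/2-1}$, at the cost of having to justify the $O(w^{-2})$ uniformity that the paper avoids by using only the lower bound. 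Both are fine.

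For the $\nu$-derivatives, your argument matches the paper's: differentiate under the integral, distribute $k=k_1+k_2$ between the outer $q_1$ and the inner $q_1^{-2}$, and use (implicitly or explicitly) a Faà di Bruno-type expansion giving $|\partial_\nu^{k_2}(q_1^{-2})|\leq C_{k_2}w^{-\nu}(\log w)^{k_2}$. One $\log$ per $\nu$-derivative is the right count.

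The genuinely different part is the $z$-derivatives. The paper differentiates the explicit product representation $k$ times, distributing derivatives among $q_1$, $q_1^{-2}$, and the boundary term $e^{z^2}$ via Faà di Bruno, and identifies $k_1=k_2=0$ as the worst case, yielding the two separate families $z^{\nu/2-k-1}$ (all derivatives on the outer $q_1$) and $z^{k-1-\nu/2}$ (all derivatives on $e^{z^2}$). Your route — differentiate once explicitly to pick up the boundary term $e^{z^2}/q_1$, then propagate via the ODE $q_2''=2zq_2'-\nu q_2$ — is cleaner: the recursion $q_2^{(k+2)}=2z\,q_2^{(k+1)}+(2k-\nu)q_2^{(k)}$ immediately gives $|q_2^{(k)}|\leq C_k z^{k-1-\nu/2}e^{z^2}$ for $k\geq1$, which dominates the stated two-term bound on $z\geq z_0>1$ (since $z^{\nu/2-k-1}<z^{k-1-\nu/2}$ for $k\geq1$ and $\nu<1$) and is therefore a sufficient, in fact slightly sharper, conclusion. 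The ODE induction buys economy; the paper's direct expansion makes the origin of the two powers more transparent. Both are correct.
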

\begin{proof}
 First we remark that we only need estimate the term $q_1(z)\int_{z_0}^z\frac{e^{w^2}}{q_1(w)^2}dw $. Let $\nu_0, z_0$ as in Corollary \ref{y1-bound}. 	Denote by 
 $ I(z)=\int_0^z e^{-(z^2-x^2)}dx.
 $
 From Lemma \ref{elementary}, $I(z)\sim\frac{1}{2z}$ for large $z$. Applying Corollary \ref{y1-bound}, we have
	$$ |q_2(\nu,z)|e^{-z^2}\leq \frac{Cz^{\nu/2}}{z_0^{\nu}}\int_{z_0}^ze^{x^2-z^2}dx\leq \frac{Cz^{\nu/2}}{z_0^{\nu}}I(z)\leq C(z_0)z^{\frac{\nu}{2}-1}.
	$$
	To estimate the derivatives, observe that
	for each $k\in\N^*$, $\partial_{\nu}^{k}(q_2)$ can be written as linear combinations of
	$$ \partial_{\nu}^{k_1}(q_1)\cdot \int_{z_0}^z e^{w^2}\partial_{\nu}^{k_2}(q_1^{-2})dw.
	$$ 
	where $k_1+k_2=k$. 	From Fa\`{a} di Bruno's formula, $\partial_{\nu}^{k_2}(q_1^{-2})$ is linear combinations of
	$$q_1^{-(2+m_1+m_2+\cdots+m_{k_2})}(\partial_{\nu}q_1)^{m_1}(\partial_{\nu}^2q_1)^{m_2}\cdots (\partial_{\nu}^{k_2}q_1)^{m_{k_2}},\textrm{ where } \sum_{j=1}^{k_2}jm_j=k_2.
	$$
	From Corollary \ref{y1-bound},
	$$ |\partial_{\nu}^{k_2}(q_2)^{-2}|\leq C_{k_2}z^{-\nu}|\log z|^{k_2}.
	$$
	Hence
	$$ |\partial^{k}_{\nu}q_2|\leq \sum_{k_1+k_2=k}C_kz^{\frac{\nu}{2}}(\log z)^{k_1}\int_{z_0}^ze^{w^2}(\log w)^{k_2}w^{-\nu}dw\leq C_k'z^{\frac{\nu}{2}}(\log z)^{k}e^{z^2}I(z).
	$$
	Applying Lemma \ref{elementary}, we obtain the desired bound. For $\partial_z^kq_2$, it can be written as linear combinations of
	$$\partial_z^kq_1\cdot \int_{z_0}^z\frac{e^{w^2}}{q_1(w)^2}dw  \textrm{ and } (\partial_{z}^{k_1}q_1)\cdot(\partial_z^{k_2}(q_1^{-2}))\cdot (\partial_z^{k-1-k_1-k_2}(e^{z^2}) ), k_1< k.
	$$
	The first term can be majorized by $C_k|z|^{\frac{\nu}{2}-k-1}$, thanks to Corollary \ref{y1-bound}. Again, from Fa\`{a} di Bruno's formula, $\partial_z^{k_2}(q_1^{-2})$ is the linear combination of the terms
	$$ q_1^{-(2+m_1+m_2+\cdots+m_{k_2})}(\partial_{z}q_1)^{m_1}(\partial_{z}^2q_1)^{m_2}\cdots (\partial_{z}^{k_2}q_1)^{m_{k_2}},\textrm{ where } \sum_{j=1}^{k_2}jm_j=k_2,
	$$
	and
	$$ |\partial_z^{k_2}(q_1^{-2})|\leq C_k|z|^{-\nu-k_2}.
	$$
	Using Corollary \ref{y1-bound}, we obtain that
	$$ |\partial_z^{k_1}q_1\cdot \partial_z^{k_2}(q_1^{-2})\cdot \partial_z^{k-k_1-k_2-1}(e^{z^2})|\leq C_k|z|^{\frac{\nu}{2}-k_1}\cdot |z|^{-\nu-k_2}\cdot |z|^{k-k_1-k_2-1}e^{z^2}\leq C_k|z|^{k-\frac{\nu}{2}-2(k_1+k_2)-1}e^{z^2}.
	$$
	The worst case is $k_1=k_2=0$. 
	This completes the proof of Corollary \ref{y2-bound}.
\end{proof}

\begin{cor}\label{bound-R2}
	As $z\rightarrow +\infty$, we have for all $k\in\N$, $$ |R_2(\nu,z)|\leq Cz^{\nu-1}e^{\frac{z^2}{2}},  |\partial_{\nu}^{k}R_2(\nu,z)|\leq C_kz^{\nu-1}|\log z|^{k}e^\frac{z^2}{2}, |\partial_{z}^{k}R_2(\nu,z)|\leq C_kz^{\nu+k-1}e^{\frac{z^2}{2}}.
	$$
	In particular, if $\nu\leq Aw^{1/2}e^{-w}$, then for all $1\leq |z|\leq w^{1/2}$, all $k\in\N$, we have
	$$ |\partial_z^kf(\nu,z)|\leq C_kw^{1/2}z^{k-1}e^{\frac{z^2}{2}},
	$$
	where $f(\nu,z)$ is the solution of \eqref{Hermite-1}.
\end{cor}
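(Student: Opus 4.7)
The plan is to obtain the bounds on $R_2$ through variation of parameters in the transformed equation \eqref{P2} for $P_2 = R_2 e^{z^2/2}$, which has homogeneous solutions $q_1, q_2$ already analyzed in Corollaries \ref{y1-bound} and \ref{y2-bound}, and then translate back. Since the Wronskian is $W(z) = e^{z^2}$, a particular solution on $[z_0, \infty)$ is
\begin{equation*}
P_2^{\mathrm{part}}(z) = q_2(z)\int_{z_0}^z q_1(w) Q_2(w) e^{-w^2}\,dw - q_1(z)\int_{z_0}^z q_2(w) Q_2(w) e^{-w^2}\,dw,
\end{equation*}
and the true $P_2$ differs from $P_2^{\mathrm{part}}$ by a combination $A(\nu) q_1 + B(\nu) q_2$ determined by matching $P_2(z_0), P_2'(z_0)$, which are bounded uniformly in small $\nu$ by continuous dependence on the smooth coefficients of \eqref{P2} on the compact interval $[0, z_0]$.

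The main estimate uses $|Q_2(w)| = |f_1(w)| e^{w^2/2} \le C w^{-1} e^{w^2}$ from Proposition \ref{first-order}, together with $|q_1(\nu, w)| \le C w^{\nu/2}$ and $|q_2(\nu, w)| \le C w^{\nu/2 - 1} e^{w^2}$ from the corollaries. This yields
\begin{equation*}
|q_1 Q_2 e^{-w^2}| \le C w^{\nu/2 - 1}, \qquad |q_2 Q_2 e^{-w^2}| \le C w^{\nu/2 - 2} e^{w^2}.
\end{equation*}
Integrating the first gives $O(z^{\nu/2})$, while the second is handled by a Laplace/Lemma \ref{elementary}-type estimate producing $O(z^{\nu/2 - 3} e^{z^2})$. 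Multiplying by the pre-factors $|q_2(z)|$ and $|q_1(z)|$ respectively, the dominant contribution is $O(z^{\nu - 1} e^{z^2})$, and the homogeneous correction $A q_1 + B q_2$ is strictly smaller. Converting back via $R_2 = P_2 e^{-z^2/2}$ gives $|R_2(\nu, z)| \le C z^{\nu - 1} e^{z^2/2}$. For the $\nu$-derivatives, one differentiates $P_2^{\mathrm{part}}$ under the integral sign; each $\partial_\nu$ on $q_1, q_2$ costs only a factor $\log z$ per the corollaries, giving the stated bounds with $(\log z)^k$. The $z$-derivatives are handled by direct differentiation of the integral representation, noting that the worst contribution comes from letting derivatives fall on $q_2(z)$ or on $e^{z^2}$, each giving a factor $z$.

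For the concluding statement about $f(\nu, z) = f_0(z) + \nu f_1(z) + \nu^2 R_2(\nu, z)$ in the regime $\nu \le A w^{1/2} e^{-w}$, $1 \le |z| \le w^{1/2}$, one estimates each term. Since $f_0(z) = e^{-z^2/2}$, we have $|f_0^{(k)}(z)| \le C_k z^k e^{-z^2/2} \le C_k w^{1/2} z^{k-1} e^{z^2/2}$ using $z \le w^{1/2}$ and $e^{-z^2/2} \le e^{z^2/2}$. By Proposition \ref{first-order}, $\nu |f_1^{(k)}(z)| \le C_k A w^{1/2} e^{-w} \cdot z^{k-1} e^{z^2/2}$, which is directly of the desired form. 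Finally, $\nu^2 |R_2^{(k)}|$ carries a factor $(w^{1/2} e^{-w})^2 z^{\nu_0 + k - 1} e^{z^2/2}$, and since $e^{-2w} w^{1 + \nu_0/2}$ is super-exponentially smaller than $w^{1/2}$, this contribution is absorbed into the main term.

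The main obstacle is bookkeeping rather than a single hard step: one must verify that the formally leading integral $\int^z q_2 Q_2 e^{-w^2}\,dw$ behaves like $z^{\nu/2 - 3} e^{z^2}$ (and not worse) by applying Lemma \ref{elementary} after factoring $e^{w^2}$, and one must track that the $\nu$-derivative bounds propagate correctly through the product of two integrated factors, which requires the product rule to be written out carefully. Once the bound on $R_2$ itself is established, the higher $z$-derivative bounds can alternatively be obtained by bootstrapping from equation \eqref{R2}, expressing $R_2''$ as $(z^2 - \nu - 1) R_2 - f_1$ and differentiating, which avoids reworking the variation of parameters for each $k$.
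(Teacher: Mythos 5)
Your proposal is correct and follows essentially the same approach as the paper: variation of parameters for the transformed equation \eqref{P2} using the homogeneous solutions $q_1, q_2$, the estimates from Proposition \ref{first-order} and Corollaries \ref{y1-bound}--\ref{y2-bound}, and a Laplace-type bound (Lemma \ref{elementary}) to control the integral involving $q_2 Q_2 e^{-w^2}$. The one difference is cosmetic but worth noting: the paper writes the particular solution with integrals starting from $0$, which implicitly requires $q_2$ to be defined on all of $(0,z_0]$, whereas you base the representation at $z_0$ and account for the homogeneous correction $A(\nu)q_1 + B(\nu)q_2$; since $|B(\nu)q_2(z)| \lesssim z^{\nu/2-1}e^{z^2} \le z^{\nu-1}e^{z^2}$ for $z>1$ and $\nu>0$, that correction is indeed dominated, so your bookkeeping is consistent and, if anything, slightly more careful than the paper's.
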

\begin{proof}
The solution of the inhomogeneous ODE \eqref{P2} is given by
	$$P_2(\nu,z)=-q_1(\nu,z)\int_0^z\frac{q_2(\nu,w)Q_2(w)}{W(w)}dw+q_2(z)\int_0^z\frac{q_1(\nu,w)Q_2(w)}{W(w)}dw.
	$$
	In particular,
	$$ P_2(\nu,z)=-q_1(\nu,z)\int_0^ze^{-w^2}q_2(\nu,w)Q_2(w)dw+q_2(\nu,w)\int_0^ze^{-w^2}q_1(\nu,w)Q_2(w)dw,
	$$
	where $Q_2(z)=-f_1(z)e^{\frac{z^2}{2}}$. From Proposition \ref{first-order},Corollary \ref{y1-bound} and Corollary \ref{y2-bound} and the fact that $I(z)\leq \frac{C}{z}$, we have
	$$ |P_2(\nu,z)|\leq Cz^{\nu-1}e^{z^2}.
	$$
	Recalling that $R_2(\nu,z)=e^{-\frac{z^2}{2}}P_2(\nu,z)$, we obtain that desired bound for $|R_2|$. 
	For the derivatives, we observe that $\partial_z^kP_2$ consists of linear combinations of 
	$$ (\partial_z^kq_1)\cdot \int_0^ze^{-\frac{w^2}{2}}f_1(w)q_2(\nu,w)dw,\quad  (\partial_z^kq_2)\cdot\int_0^ze^{-\frac{w^2}{2}}f_1(w)q_1(\nu,w)dw,
	$$
	and
	$$ (\partial_z^{k_1}q_1)\cdot \partial_z^{k_2-1}(e^{-\frac{z^2}{2}}f_1(z)q_2),\quad  (\partial_z^{k_1}q_2)\cdot \partial_z^{k_2-1}(e^{-\frac{z^2}{2}}f_1(z)q_1 )\textrm{ where $k_1+k_2=k$ if $k_2\geq 1$. }
	$$
	Thus from Proposition \ref{first-order}, Corollary \ref{y1-bound}, Corollary \ref{y2-bound}, the dominating part can be bounded by $C_k|z|^{\nu+k-1}e^{z^2}$. Since $R_2=e^{-\frac{z^2}{2}}P_2$, we obtain that 
	$ |\partial_z^kR_2|\leq C_k|z|^{\nu+k-1}e^{\frac{z^2}{2}}.
	$ The derivatives $\partial_{\nu}^kP_2$ consists of linear combinations of
	$$ (\partial_{\nu}^{k_1}q_1)\cdot \int_0^ze^{-\frac{w^2}{2}}f_1(w)(\partial_{\nu}^{k_2}q_2)(\nu,w)dw, \quad (\partial_{\nu}^{k_1}q_2)\cdot \int_0^ze^{-\frac{w^2}{2}}f_1(w)(\partial_{\nu}^{k_2}q_1)(\nu,w)dw,
	$$ 
	where $k_1+k_2=k$. Again, from Proposition \ref{first-order}, Corollary \ref{y1-bound}, Corollary \ref{y2-bound}, these terms are dominated by
	$|z|^{\nu-1}|\log z|^{k}e^{z^2}$. Thus $|\partial_{\nu}^kP_2|\leq C_k|z|^{\nu-1}|\log z|^{k}e^{\frac{z^2}{2}}$. The last assertion is the direct consequence of the estimates above and Proposition \ref{first-order}. The proof of Corollary \ref{bound-R2} is complete.
\end{proof}

\subsection{Estimate for the first eigenvalue and eigenfunction}
For fixed $w>0$, consider the function
$$ F(\nu,w):=f(\nu,\sqrt{w})=f_0(\sqrt{w})+\nu f_1(\sqrt{w})+\nu^2R_2(\nu,\sqrt{w}).
$$

\begin{prop}\label{fixed-point}
	There exists $w_0>0, A>0$, such that for all $w\geq w_0$, there exists a unique solution $\nu(w)\in (0,Aw^{1/2}e^{-w})$ of the equation
	$$ F(\nu(w),w)=0.
	$$
	Moreover, the application $w\mapsto \nu(w)$ is smooth.
\end{prop}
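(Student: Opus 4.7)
The plan is to solve $F(\nu,w)=0$ for $\nu$ as a fixed point, and then obtain smoothness from the implicit function theorem. First, I would read off the dominant balance. By the definition of $f_0$ and Proposition \ref{first-order},
$$ f_0(\sqrt{w})=e^{-w/2},\qquad f_1(\sqrt{w})= -\frac{\sqrt{\pi}}{4\sqrt{w}}\,e^{w/2}\bigl(1+O(w^{-1})\bigr),$$
while Corollary \ref{bound-R2} gives, for $0<\nu<1$ and large $w$,
$$ |R_2(\nu,\sqrt w)|\lesssim w^{(\nu-1)/2}e^{w/2}\lesssim w^{-1/2}e^{w/2},\qquad |\partial_\nu R_2(\nu,\sqrt w)|\lesssim w^{-1/2}(\log w)\,e^{w/2}.$$
Setting $F=0$ to leading order gives the expected scale $\nu\sim \frac{4\sqrt{w}}{\sqrt{\pi}}\,e^{-w}$, which singles out the interval $J_w:=[0,A\,w^{1/2}e^{-w}]$ for a constant $A$ slightly larger than $4/\sqrt{\pi}$ as the natural search set.

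Next, since $-f_1(\sqrt{w})>0$ for $w$ large, rewrite $F(\nu,w)=0$ as the fixed point problem
$$ \nu = T(\nu,w):=\frac{f_0(\sqrt w)+\nu^2 R_2(\nu,\sqrt w)}{-f_1(\sqrt w)}.$$
For $\nu\in J_w$ the numerator is bounded by $e^{-w/2}+CA^2w^{1/2}e^{-3w/2}=e^{-w/2}(1+o(1))$, while the denominator is $\tfrac{\sqrt\pi}{4}w^{-1/2}e^{w/2}(1+o(1))$, so $T(\nu,w)\leq A w^{1/2}e^{-w}$ once $w_0$ is large enough, showing $T(\cdot,w)$ maps $J_w$ into itself. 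For the Lipschitz constant, differentiate:
$$ \partial_\nu T(\nu,w)=\frac{2\nu R_2(\nu,\sqrt w)+\nu^2\partial_\nu R_2(\nu,\sqrt w)}{-f_1(\sqrt w)},$$
and the numerator bound $\lesssim A e^{-w/2}+A^2 w^{1/2}(\log w)e^{-3w/2}\lesssim e^{-w/2}$ combined with the denominator estimate yields $|\partial_\nu T(\nu,w)|\lesssim w^{1/2}e^{-w}\ll 1$. Hence $T(\cdot,w)$ is a strict contraction on $J_w$, and the Banach fixed point theorem produces a unique $\nu(w)\in J_w$ with $F(\nu(w),w)=0$.

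Finally, for smoothness I would apply the implicit function theorem. The map $F$ is jointly smooth in $(\nu,w)$ on $J_w\times (w_0,\infty)$: $f_0,f_1$ are smooth in $w>0$, and the joint smoothness of $R_2(\nu,z)$ in $(\nu,z)$ follows from the linear ODE \eqref{R2} with smooth coefficients and inhomogeneity, together with smooth dependence on the parameter $\nu$. The partial derivative
$$ \partial_\nu F(\nu(w),w)= f_1(\sqrt w)+2\nu(w)R_2(\nu(w),\sqrt w)+\nu(w)^2\partial_\nu R_2(\nu(w),\sqrt w)$$
is dominated by $f_1(\sqrt w)\neq 0$ for $w$ large (the remaining terms being $O(e^{-w/2})$, hence exponentially smaller than $f_1(\sqrt w)$), so the implicit function theorem applies and gives $\nu\in C^\infty((w_0,\infty))$.

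The main obstacle is simply book-keeping: one needs to verify that the hypotheses of Corollaries \ref{y1-bound}, \ref{y2-bound}, \ref{bound-R2} (namely $\nu<\nu_0$ for small $\nu_0$ and $z\geq z_0$) are satisfied throughout $J_w$ for $w$ large, but since $A w^{1/2}e^{-w}\to 0$ as $w\to\infty$ and $\sqrt w\to\infty$, both hypotheses are automatic for $w_0$ sufficiently large. No serious analytic difficulty is expected beyond this.
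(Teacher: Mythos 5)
Your proof is correct and takes essentially the same approach as the paper: the paper's explicit iteration scheme $f_0(\sqrt{w})+\nu_{n+1}f_1(\sqrt{w})=-\nu_n^2R_2(\nu_n,\sqrt{w})$ is exactly the Picard iteration for your map $T$, and the paper invokes the (implicit) function theorem for smoothness just as you do. The bookkeeping in both cases rests on the same estimates from Proposition~\ref{first-order} and Corollary~\ref{bound-R2}, and your verification that the hypotheses hold on $J_w$ for large $w$ is the same observation the paper makes implicitly.
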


\begin{proof}
 For fixed $w>0$ large, we define the iteration scheme as follows:
	\begin{equation*}
	f_0(\sqrt{w})+\nu_1f_1(\sqrt{w})=0,\quad 
	 f_0(\sqrt{w})+\nu_{n+1}f_1(\sqrt{w})=-\nu_n^2R_2(\nu_n,\sqrt{w}),\quad n\geq 1.
	\end{equation*}
Since $f_1(z)$ never vanish for large $w$,  we have
	$$ \nu_{n+1}-\nu_n=\frac{G_2(\nu_{n-1},w)-G_2(\nu_n,w) }{f_1(\sqrt{w})}
	$$
	where $$
	G_2(\nu,w)=\nu^2R_2(\nu,\sqrt{w}).
	$$
	Applying mean-value theorem and Proposition \ref{first-order}, we obtain that
	$$ |\nu_{n+1}-\nu_{n}|\leq \frac{|\partial_{\nu}G_2(\theta\nu_{n-1}+(1-\theta)\nu_{n},w)|}{e^{w/2}c_1^{-1}w^{-1/2}}\cdot |\nu_{n}-\nu_{n-1}|.
	$$
	Assume that $\nu_j\leq Aw^{1/2}e^{-w}$, for all $j\leq n$, then from Corollary \ref{bound-R2}, the right hand side can be bounded by
	$$ 2Ac_1e^{-w}w^{\frac{\nu}{4}+1}|\nu_n-\nu_{n-1}|.
	$$
	Since $\nu_1=-f_0(\sqrt{w})/f_1(\sqrt{w})\in [c_1w^{1/2}e^{-w}, c_1^{-1}w^{1/2}e^{-w}]$, we have
	$$ \nu_{n+1}\leq \sum_{j=0}^{n}(2Ac_1e^{-w}w^{\frac{\nu}{4}+1})^j\nu_1\leq c_1^{-1}w^{1/2}e^{-w}(1-2Ac_1e^{-w}w^{\frac{\nu}{4}+1})^{-1}<Aw^{1/2}e^{-w},
	$$
	provided that $A>2c_1^{-1}$ and $w\geq w_0$ are large enough. Fixing such $A$ and $w_0$, by induction, we have that $\nu_n\leq Aw^{1/2}e^{-w}$ for all $n\in\N$. Now the absolute convergence of the sequence $\sum_{n=0}^{\infty}|\nu_{n+1}-\nu_{n}|$ (with the convention $\nu_0=0$) implies the existence of the limit
	$$ \nu(w)=\lim_{n\rightarrow\infty}\nu_n(w) \in[0,Aw^{1/2}e^{-w}].
	$$
 Next we show that $F(\nu(w),w)=0$. From the continuity of $F(\cdot,w)$, we have $F(\nu_n,w)\rightarrow F(\nu(w),w)$ as $n\rightarrow\infty$. By definition, we write
	$$ f_0(\sqrt{w})+\nu_nf_1(\sqrt{w})=-F(\nu_{n-1},w)+f_0(\sqrt{w})+\nu_{n-1}f_1(\sqrt{w}).
	$$ 
	Passing to the limit for both sides, we have $F(\nu(w),w)=0$. The uniqueness follows from similar argument, and the smoothness of the mapping $w\mapsto \nu(w)$ is guranteed by the inverse function theorem. This completes the proof of Proposition \ref{fixed-point}.
\end{proof}

\begin{proof}[Proof of Proposition \ref{estimate:eigenvalue}]
	From Proposition \ref{first-order}, Corollary \ref{bound-R2}, we know that 
	$$ |f_1^{(k)}(\sqrt{w})|\leq C_kw^{\frac{k-1}{2}}e^{\frac{w}{2}}, |\partial_{\nu}^kR_2(\nu,\sqrt{w})|\leq C_kw^{\frac{\nu-1}{2}}|\log w|^ke^{\frac{w}{2}}, |\partial_z^kR_2(\nu,\sqrt{w})|\leq C_kw^{\frac{\nu+k-1}{2}}e^{\frac{w}{2}}.
	$$ 
	We remark that the exact power of $w$ is of no importance, since the dominated factors are always exponentially in $w$. To simplify the notation, we will replace all the power of $w$ as well as $\log w$ by some polynomials $P_k$ in variable $\sqrt{w}$, which may change line by line in the calculations below. 
	
	The first estimate of (1) is a direct consequence of Proposition \ref{fixed-point}. To estimate the derivative $\nu^{(k)}(w)$, we first take one derivative in $w$ to the equation
	$$ F(\nu(w),w)=f_0(\sqrt{w})+\nu(w) f_1(\sqrt{w})+\nu(w)^2R_2(\nu(w),\sqrt{w})=0.
	$$
	Since $\nu(w)\leq Cw^{1/2}e^{-w}$, the dominating coefficient of $\nu'(w)$ is $f_1(\sqrt{w})$, since $f_1$ and $R_2$ have the same $e^{w/2}$ factor. Moreover, the $|\partial_wf_0(\sqrt{w})|$ is always much larger than $|2\nu(w)R_2(\nu(w),\sqrt{w})+\nu(w)^2\partial_w(R_2(\nu(w),\sqrt{w}) )|$. Therefore, $|\nu'(w)|\leq C_1w^{A_1}e^{-w}$ holds true with some power $A_1$. Taking $k$ derivatives in $w$, the dominating coefficient of $\nu^{(k)}(w)$ is still $f_1(\sqrt{w})$ while $|\partial_{w}^k(f_0(\sqrt{w}))|$ is always much larger than all other terms, therefore, $|\nu^{(k)}(w)|\leq C_kw^{A_k}e^{-w}$. Then (2) and (3) can be deduced from the bound of $\nu^{(k)}(w)$ easily.
	
	Finally, we estimate $p$ and $\partial_w^{k}p$. We first claim that
	$$ \|f(\nu(w),\sqrt{w}\cdot )\|_{L^2((-1,1))}^2=w^{-1/2}\int_{-w^{1/2}}^{w^{1/2}}|f(\nu(w),z )|^2dz\sim w^{-1/2}.
	$$
Indeed, the upper bound follows directly from Proposition \ref{first-order} and Corollary \ref{bound-R2}. For the lower bound, note that for $|z|<w^{1/2}/2$, $\nu<Aw^{1/2}e^{-w}$, Proposition \ref{first-order} and Corollary \ref{bound-R2} yield 
$$ |f(\nu,z)|\geq e^{-\frac{z^2}{2}}-Cw^{1/2}e^{-w}z^{-1/2}e^{z^2/2}\geq  e^{-\frac{z^2}{2}}\left(1-Cw^{1/2}z^{-1/2}e^{-(w-z^2)}\right)>e^{-\frac{z^2}{2}}/2,
$$
if $w$ is large enough. Hence $\int_{-w^{1/2}}^{w^{1/2}}|f(\nu,z)|^2dz$ is uniformly bounded from below. 

Assume that $|x|\leq\frac{1}{2}$. Observe that
$$ |\nu f_1(\sqrt{w}x)+\nu^2R_2(\nu,\sqrt{w}x)|\leq Cw^{A_1}e^{-\frac{wx^2}{2}}e^{-(w-wx^2)}\leq Cw^{A_1}e^{-\frac{3w}{4}}e^{-\frac{wx^2}{2}},
$$
hence $|p(w,x)|\sim w^{1/4}e^{-\frac{wx^2}{2}}$.
Next we estimate the derivatives of $p(w,x)$. Direct calculation shows $$\partial_w(\|f(\nu(w),\sqrt{w}\cdot )\|_{L^2((-1,1))}^2)=O(w^{-3/2}).$$ 
Consequently, the absolute value of the second term of
$$ \partial_wp(w,x)=\frac{\partial_w(f(\nu(w),\sqrt{w}x)) }{\|f(\nu(w),\sqrt{w}\cdot)\|_{L^2((-1,1))}}-\frac{f(\nu(w),\sqrt{w}x)\partial_w(\|f(\nu(w),\sqrt{w}\cdot)\|_{L_x^2}^2 ) }{2\|f(\nu(w),\sqrt{w}\cdot)\|_{L^2((-1,1))}^3 },
$$
can be bounded by $O(w^{-3/4})$. 
To control the other term, we write
\begin{equation*}
\begin{split}
\partial_w(f(\nu(w),\sqrt{w}x) )=&-\frac{x^2}{2}e^{-\frac{wx^2}{2}}+\nu'f_1(\sqrt{w}x)+2\nu\nu'R_2(\nu,\sqrt{w}x)\\
+&\nu\frac{x}{2\sqrt{w}}f_1'(\sqrt{w}x)+\frac{\nu^2 x}{2\sqrt{w}}(\partial_zR_2)(\nu,\sqrt{w}x)+\nu^2\nu'(\partial_{\nu}R_2)(\nu,\sqrt{w}x).
\end{split}
\end{equation*}
From Proposition \ref{first-order} and Corollary \ref{bound-R2}, all the terms except for the first term can be bounded by $O(w^{A}e^{-\frac{3w}{4}})$. The first term $x^2e^{-wx^2/2}=(wx^2)e^{-wx^2/2}=O(w^{-1})$. Therefore, 
$ |\partial_wp(w,x)|=O(w^{-3/4}).
$
Taking one more derivatives, one easily verifies that $|\partial_w^2p(w,x)|=O(w^{-\frac{7}{4}})$.  

Finally, for $\frac{1}{2}<|x_0|\leq 1$,
$$ |p(w,x)|\leq Cw^{1/4}|f(\nu(w),\sqrt{w}x)|\leq w^{1/4}e^{-\frac{w}{8}}+Cw^{A}e^{-\frac{w}{2}}\leq e^{-\frac{w}{10}},
$$ 
provided that $w$ is large enough. This completes the proof of Proposition \ref{estimate:eigenvalue}.
\end{proof}

\appendix

\section{Non-observability for the Grushin wave equation}

In this section, we consider the Grushin-wave equation
\begin{align}\label{wave}
\begin{cases}
 \partial_t^2u-\Delta_Gu=0,\quad (t,x,y)\in\R_t\times \Omega \\
 (u,\partial_tu)|_{t=0}=(u_0,u_1)\in \dot{H}_{0,G}^1(\Omega)\times L^2(\Omega).
\end{cases}
\end{align}
Recall that the semi-norm of $\dot{H}_{0,G}^1$ is given in \eqref{H1G}.

We will show that for any time $T>0$, the following observability 
\begin{align}\label{ob:wave}
\int_{\Omega}(|\nabla_Gu_0(x,y)|^2+|u_1(x,y)|^2)dxdy\leq C_{T,\omega}\int_0^T\int_{\Omega}|\partial_tu(t,x,y)|^2dxdydt
\end{align}
 cannot hold, if the observation region $\omega$ does not contain the whole circle $\{(x,y):x=0\}=\{0\}_x\times\T_y$:
\begin{prop}\label{non-observabilitywave}	
Let $T>0$ and $\omega\subset\Omega$ be a open subset such that $$\omega\cap \{(x,y)\in\Omega: x=0 \}\neq \{(x,y)\in\Omega: x=0 \}.$$ 
Then there exists a sequence of solutions $(u_n)_{n\in\N}$ of \eqref{wave} satisfying 
$$ \|\nabla_Gu_{n}(0,\cdot)\|_{L^2(\Omega)}^2+\|\partial_tu_n(0,\cdot)\|_{L^2(\Omega)}\gtrsim 1
$$ 
for all large $n$
and
$$ \lim_{n\rightarrow\infty}\int_0^T\|\partial_tu_n(t,\cdot)\|_{L^2(\Omega)}^2dt=0.
$$
In particular, the observability estimate \eqref{ob:wave} is untrue.
\end{prop}

\begin{proof}
Without loss of generality, we may assume that a small neighborhood $U$ of $(0,0)$ does not intersect with the closure of $\omega$.
Pick a sequence $(h_n)_{n\in\N}$ such that $h_n\rightarrow 0$ as $n\rightarrow\infty$ and consider the following sequence of approximate solutions:
$$ v_{n}(t,x,y)=h_n^{1/8}\exp\big(i\big(\frac{y}{h_n}+\frac{t}{h_n^{1/2}}\big) \big)\cdot \exp\big(-\big(\frac{x^2}{2h_n}+\frac{y^4}{4h_n} \big)\big).
$$
Direct computation yields
$$ (\partial_t^2-\Delta_G)v_n=f_n,
$$
with
$$ f_n(t,x,y)=\big[\frac{3x^2y^2}{h_n}+\frac{x^2}{h_n^2}(2iy^3-y^6) \big]v_n(t,x,y).
$$
One verifies that for any fixed $t\in\R$,
$$ \|\nabla_Gv_n(t,\cdot)\|_{L^2(\R^2)}\sim 1, \; \|\partial_tv_n(t,\cdot)\|_{L^2(\R^2)}\sim 1,
$$
where the implicit constants are independent of $t$. Moreover, for large $n$, 
$$ \|f_h(t,\cdot)\|_{L^2(\R^2)}=O(h_n^{\frac{1}{4}}),\; \int_0^T\|\partial_tv_n(t)\|_{L^2(\omega)}^2dt\lesssim C_{N_0}Th^{N_0}
$$
for any $N_0\in\N$.

Next we fix a cutoff function $\chi(x,y)$ with support close to $(0,0)$ and $\chi\equiv 1$ on $U$. Denote by $w_n=\chi\cdot v_n$. Since $v_n(t,x,y)$ is concentrated on $|y|\leq O(h_n^{1/4}), |x|\leq O(h_n^{1/2})$, for sufficiently large $n$, we still have
\begin{align}\label{initial} 
 \|\nabla_Gw_n(t,\cdot)\|_{L^2(\Omega)}\sim 1,\; \|\partial_tw_n(t,\cdot)\|_{L^2(\Omega)}\sim 1.
\end{align}
Furthermore, $w_n\in \dot{H}_{0,G}^1(\Omega)$ and it satisfies the equation
$$ (\partial_t^2-\Delta_G)w_n=\widetilde{f}_n,
$$
with $$\|\widetilde{f}_n(t,)\|_{L^2(\Omega)}=O(h_n^{1/4}),\; \int_0^T\|\partial_tw_n(t)\|_{L^2(\omega)}^2dt\lesssim C_{N_0}Th^{N_0} $$ for any $N_0\in\N$. 
Finally, let $u_n$ be the solution of \eqref{wave} with the initial data $(w_n(0,\cdot),\partial_tw_n(0,\cdot))$, by the energy estimate of $u_n-w_n$, we have for any $t>0$,
$$ \|\nabla_G(u_n-w_n)(t)\|_{L^2(\Omega)}^2+\|\partial_t(u_n-w_n)(t)\|_{L^2(\Omega)}^2\leq \int_0^t\|\partial_t(u_n-w_n)(t)\|_{L^2(\Omega)}\|\widetilde{f_n}(t)\|_{L^2(\Omega)}dt,
$$
which implies that 
$$ \sup_{t\in[0,T]}\|\partial_t(u_n-w_n)(t)\|_{L^2(\Omega)}=O(h_n^{\frac{1}{4}}).
$$
Thus
$$ \int_0^T\|\partial_tu_n(t)\|_{L^2(\omega)}^2\lesssim \int_0^T\|\partial_tw_n(t)\|_{L^2(\omega)}^2dt+\int_0^T\|\partial_t(u_n-w_n)(t)\|_{L^2(\Omega)}^2dt=O(h_n^{1/2}). 
$$
In view of $(u_n(0,\cdot),\partial_tu_n(0,\cdot) )=(w_n(0,\cdot),\partial_tw_n(0,\cdot) ) $ and \eqref{initial}, the proof of Proposition \ref{non-observabilitywave} is complete.

\end{proof}

\begin{rem}
In \cite{Let}, the author constructed approximate solutions (Gaussian beams) concentrated on the rays in the elliptic region. These rays remain for a long time close to the subelliptic region, that leads to the proof of the non-observability for the subelliptic wave equation. In our special setting, we directly construct approximate solutions that are concentrated on the subelliptic region.

\end{rem}



\begin{thebibliography}{10}
	
	
	
	
	
	\bibitem[AM14]{AM}
	N. Anantharaman and F. Maci\`a, 
	{\it
	Semiclassical measures for the Schr\"odinger equation on the torus,}
	 J. Eur. Math. Soc., 16 (2014), 1253--1288. 
	
	\bibitem[ALM16]{ALM}
	N.~Anantharaman, M.~L\'eautaud, F.~Maci\`a,
	{\it Wigner measures and observability for the Schr\"odinger equation on the disk}, 
	Invent. Math., 206 (2016), 485-599.
	
	
	\bibitem[BLR92]{BLR}
	C. Bardos, G. Lebeau and J. Rauch.
{\it
	Sharp sufficient conditions for the observation, control, and stabilization of waves from the boundary},
	 SIAM J. Control Optim., 30 (1992), 1024-1065.
	
	\bibitem[BCG14]{BCG14}
K.~Beauchard, P.~Cannarsa, R.~Guglielmi. {\it Null controllability of Grushin-type operators in dimension two}, J. Eur. Math.
	Soc., 16 (2014), 67-101. 

	
	
	
	\bibitem[BBZ13]{BBZ}
	J.~Bourgain, N.~Burq and M. Zworski,
	{\it Control for Schr\"odinger operators on 2-tori: rough potentials.}
	 J. Eur. Math. Soc.  15 (2013), 1597-1628.
	
	
	
	
	
	\bibitem[Bu04]{B04}
	N.~Burq.
	{\it  Smoothing effect for Schr\"odinger boundary value problems.} 
	 Duke Math. Jour., 123 (2004), 403-427.
	
	
	
	\bibitem[BL03]{BL03}
	N.~Burq, G.~Lebeau, {\it Mesures de d\'efaut de compacit\'e, application au syst\`eme de Lam\'e,}
	Ann. Scient. Ec. Norm. Sup., (4) 34 (2001), 817-870.
	
	
	\bibitem[BG96]{BG}
	N. Burq and P. G{\'e}rard,
	{\it Condition n{\'e}cessaire et suffisante pour la
		contr{\^o}labilit{\'e} exacte des ondes},
	 Comptes Rendus de L'Acad{{\'e}}mie des Sciences, 325 (1997), 749-752.
	
	
	
	
	
	
	\bibitem[BZ12]{BZ4}
	N.~Burq and M. Zworski.
{\it Control for Schr\"odinger equations on tori},
	 Math. Research Letters, 19 (2012), 309-324.
	
	\bibitem[Ge91]{Ge91}
	P.~G\'erard, {\it Microlocal defect measures}, Comm. Partial Differential Equations, 16.11 (1991), 1761-1794.
	
	
	\bibitem[GG12]{Ge}
	P.~G\'erard, S.~Grellier,
{\it The Cubic Szeg\"o equation},
	Ann. Scient. Ec. Norm. Sup.,  (4) 43 (2010), 761-810.
	
	\bibitem[HS84]{HS}
	B.~Helffer and J.~Sjostrand. 
{\it Multiple Wells in the Semi-Classical Limit I},
	 Comm. Partial Differential Equations, 9.4 (1984), 337--408.
	
	
	
	\bibitem[Ja90]{Ja}
	S. Jaffard.
	{\it Contr\^ole interne exact des vibrations d'une plaque rectangulaire}, 
	 Portugal. Math., 47 (1990), 423-429.
	
	
	\bibitem[Ji17]{Ji}
	L.~Jin, 
	{\it Control for Schr\"odinger equation on hyperbolic surfaces}, Math. Research Letters, 25 (2018), 1865-1877.
	
	\bibitem[Ko17]{Ko17}
	A.~Koenig, {\it Non-null-controllability of the Grushin operator in 2D}, 
	 Comptes Rendus Math\'ematiques, 355.12 (2017), 1215-1235.
	
	\bibitem[DuKo18]{DuKo18}
M.~Duprez, A.~Koenig, {\it Control of the Grushin equation: non-rectangular control region and minimal time}, preprint, 2018, arxiv:1807.01241.
	
	
\bibitem[Mel]{Melrose1}	
R.B.~Melrose,{\it The wave equation for a hypoelliptic operator with symplectic characteristics of codimension two}, J. Anal. Math., 44 (1984) 134-182.

	
	\bibitem[Le92]{Le}
	G. Lebeau
	{\it Contr\^ole de l'\'equation de Schr\"odinger},
	 J. Math. Pures Appl., (9) 71 (1992), 267-291.
	
	
		\bibitem[Let20]{Let}
	C.~Letrouit
	{\it Subelliptic wave equations are never observable},
	preprint, arXiv:2002.01259.
	
	\bibitem[Li88]{Li}
	J.L.~Lions.
{\it Contr{\^o}labilit{\'e} exacte. Perturbation et stabilisation des
		syst{\`e}mes distribu{\'e}s},  ContrÃ´labilitÃ© exacte, perturbations et stabilisation de systÃ¨mes distribuÃ©s, Tome 1, vol. 8 (1988), RMA. 
	\bibitem[RS18]{RS18}
	I.~Rivas, C-M.~Sun, {\it Internal controllability of non-localized solution for the Kodomtsev-Petviashvili II equation },  SIAM J. Control. Optimi. 58(3):1715-1734.
	
	
	\bibitem[WG88]{Wang-Guo}
	Z-X.~Wang, D-R.~Guo,{\it Introduction to special functions}, Beijing Science Press, p 354 (1979): 383.
	
	\bibitem[Wa1918]{Watson}
	G.~Watson,{\it The harmonic functions associated with the parabolic cylinder}, Proceedings of the London Mathematical Society,  (17) 2 (1918), 116-148.
	
	
	
	
	
	
	\bibitem[Zw12]{EZB}
	M.~Zworski.
{\it Semiclassical analysis}, 
	 Graduate Studies in Mathematics, (2012) AMS.
	
\end{thebibliography}
\end{document}